\newcommand{\appsection}[1]{\let\oldthesection\thesection
\renewcommand{\thesection}{Appendix \oldthesection}
\section{#1}\let\thesection\oldthesection}
\newtheorem{introcon}{Conjecture}
  \newtheorem{introthm}{Theorem}
  \newtheorem{introcor}{Corollary}
  \newtheorem{theorem}{Theorem}[section]
  \newtheorem{lemma}[theorem]{Lemma}
  \newtheorem{proposition}[theorem]{Proposition}
  \newtheorem{conjecture}[theorem]{Conjecture}
  \newtheorem{definition}[theorem]{Definition}
  \newtheorem{example}[theorem]{Example}
\newtheorem{remark}[theorem]{Remark}
\theoremstyle{remark}
\numberwithin{equation}{section}
\title{Fundamental Groups, Coregularity, \\
and low dimensional klt Calabi Yau pairs}
\author[L.~Braun]{Lukas Braun}
\address{Institut für Mathematik, Fakultät für Mathematik, Informatik und Physik, Universität Innsbruck, Technikerstraße 13, A-6020 Innsbruck, Austria}
\email{lukas.braun@uibk.ac.at}
\thanks{
During the work on this manuscript, LB was partially supported by DFG grant 452847893 and by FWF grant PAT9495823. 
}
\author[F.~Figueroa]{Fernando Figueroa}
\address{Department of Mathematics, Princeton University, Fine Hall, Washington Road, Princeton, NJ 08544-1000, USA
}
\email{fzamora@princeton.edu}
\subjclass[2020]{Primary 14E30, 14F35, Secondary 14J32.}
\begin{document}

\begin{abstract} 
In this article, we study how the absolute
coregularity 
of a projective log pair
reflects on
its fundamental group.
More precisely, we conjecture
that for a projective klt log pair $(X,D)$ 
of absolute coregularity $c$ (and arbitrary dimension)
the fundamental group
$\pi_1^{\rm reg}(X,D)$ 
admits a normal abelian subgroup
of finite index
and rank at most $2c$.
We prove this conjecture 
in the cases $0 \leq c \leq 3$, building on the almost abelianity of the fundamental groups of klt Calabi-Yau pairs of dimension $\leq 3$. In the cases $0 \leq c \leq 2$ and fixed dimension, we can furthermore bound the index of a solvable normal subgroup.

In dimension three, we are able to prove almost abelianity of the fundamental group of the regular locus for projective klt Calabi-Yau pairs.

\end{abstract}

\maketitle

\setcounter{tocdepth}{1}
\tableofcontents

\section{Introduction}

In this article, we investigate the connections between two important invariants of projective varieties (or pairs): the \textit{fundamental group} and the \textit{coregularity}.

The \textit{fundamental group} as a topological invariant gains its importance from viewing smooth projective varieties as Kähler manifolds. As such, it is long known that Fano manifolds are simply connected and projective manifolds with trivial (antinef) canonical class have almost abelian (almost nilpotent, respectively) fundamental group. During the last few years, many of these statements have been generalized to orbifolds and to varieties with Kawamata log terminal (klt) singularities, with respect to the \textit{orbifold fundamental group} and/or the \textit{fundamental group of the smooth locus}. While proofs in the orbifold case mostly mimic the differential-geometric proofs from the smooth case using orbifold charts, in the klt case, often new techniques and insights are needed. It is currently not known if klt pairs $(X,D)$ with trivial (antinef) log canonical divisor have almost abelian (almost nilpotent, respectively) fundamental group of the smooth locus $\pi_1(X_{\rm reg})$. 

\subsection{Coregularity and fundamental groups}
The \textit{regularity} has been defined by Shokurov in~\cite{Sho00} and is the highest dimension of a dual complex of a log Calabi-Yau structure. The \textit{coregularity}, as introduced in~\cite{FMP22, Mor22}, measures the difference between the dimension of $X$ and its regularity.
Toric varieties have coregularity zero. More precisely, as~\cite[Thm.~3.25]{Mor22} shows, a variety $X$ with \emph{a torus action of complexity $t$} (i.e. the maximal torus effectively acting on $X$ is of codimension $t$) has coregularity $c \leq t$. So in a sense, the coregularity measures how far a variety is from being toric, compare~\cite[Thm.~3.26]{Mor22}. 

In this context, we pose the following conjecture, where by $\pi_1^{\rm reg}(X,D)$, we mean the orbifold fundamental group of the orbifold pair $(X_{\rm reg},\left. D \right|_{X_{\rm reg}})$, supported on the regular locus $X_{\rm reg}$ of $X$.

\begin{introcon}
\label{introconj-virt-abelian}
    Let $(X,D)$ be a projective klt pair with absolute coregularity $c$, in particular $(X,D)$ must be of Calabi-Yau type. Then $\pi_1^{\rm reg}(X,D)$ is virtually abelian of rank at most $2c$.
\end{introcon}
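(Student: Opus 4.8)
The plan is to control $\pi_1^{\rm reg}(X,D)$ on a birational model where the Calabi--Yau structure is dlt, and then to read the fundamental group off from two pieces: the lc strata, which are klt Calabi--Yau pairs of dimension at most $c$, and the dual complex, which is combinatorially rigid. Throughout I would use the known functoriality of $\pi_1^{\rm reg}$ under dlt modifications and under quasi-\'etale covers, so that ``virtually abelian of rank $\leq r$'' may be checked after such operations.

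\emph{Reduction to a good dlt model.} Since $(X,D)$ has finite absolute coregularity it is of Calabi--Yau type, so pick a $\mathbb{Q}$-boundary $B\geq D$ with $(X,B)$ log canonical, $K_X+B\sim_{\mathbb Q}0$ and $\operatorname{coreg}(X,B)=c$. Passing to a $\mathbb{Q}$-factorial dlt modification and running a suitable MMP, we may assume $(X,B)$ is a good minimal dlt log Calabi--Yau pair, so that the dual complex $\mathcal{D}=\mathcal{D}(X,B)$ is a pseudomanifold of dimension $n-c-1$, with $n=\dim X$; its minimal lc centers $W$, equipped with the different $(W,B_W)$, are then klt Calabi--Yau pairs of dimension exactly $c$.

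\emph{The low-dimensional input.} If $c=n$, i.e. no log Calabi--Yau boundary $\geq D$ carries an lc center, there is nothing combinatorial to do: $(X,D)$ is a klt Calabi--Yau pair of dimension $c$, and for $c\leq 3$ the almost abelianity of $\pi_1^{\rm reg}$ for klt Calabi--Yau pairs of dimension at most $3$ applies directly; the rank bound $2c$ then follows from virtual abelianity together with the bound $b_1\leq 2\dim$ for the finite \'etale covers that trivialise the orbifold structure. For the conjecture in general this $c=n$ regime is precisely the statement for klt Calabi--Yau pairs, and I would attack it through a Bogomolov--Beauville--type splitting after a quasi-\'etale cover, combined with the orbifold homotopy sequence $\pi_1^{\rm reg}(F,D_F)\to\pi_1^{\rm reg}(X,D)\to\pi_1(A)\to 1$ of the orbifold Albanese morphism $X\to A$, where $(F,D_F)$ has vanishing orbifold irregularity and hence, conjecturally, finite $\pi_1^{\rm reg}$; the bound $\dim A\leq c$ comes from $\dim A\leq q^{\rm orb}(X,D)\leq n=c$.

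\emph{Gluing over the dual complex, and the main obstacle.} When $c<n$ the plan is to reconstruct $\pi_1^{\rm reg}(X,B)$ from the groups $\pi_1^{\rm reg}(W,B_W)$ of the minimal lc centers together with the topology of $\mathcal{D}$. Since $\dim W=c\leq 3$, each $\pi_1^{\rm reg}(W,B_W)$ is virtually abelian of rank $\leq 2c$ by the low-dimensional result, and minimal lc centers lying in one family over a chamber of $\mathcal{D}$ are deformation equivalent, so their fundamental groups share a common abelian part up to commensurability. A van Kampen--type argument, using that $\mathcal{D}$ has finite fundamental group by Koll\'ar--Xu, should then present $\pi_1^{\rm reg}(X,B)$ as an extension of a finite group --- assembled from $\pi_1(\mathcal{D})$ and the monodromy of the family of minimal lc centers over $\mathcal{D}$ --- by a finite-index subgroup of a quotient of $\prod_W\pi_1^{\rm reg}(W,B_W)$, with the commensurability keeping the rank at most $2c$. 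Making this precise is \textbf{the crux}: one must track how loops around the codimension-one lc strata and the coefficient-$1$ components act, verify that the higher-dimensional strata impose exactly the relations dictated by $\mathcal{D}$, and exclude any growth of the rank past $2c$. The cases $c\leq 2$, where one moreover bounds the index of a solvable normal subgroup in terms of the dimension, are where this mechanism gets tested and form the base of the argument.
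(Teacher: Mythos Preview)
The statement is a conjecture, and the paper does \emph{not} prove it in general; what the paper proves is the reduction step (Theorem~\ref{introthm-induction-coreg}) --- from arbitrary dimension down to klt Calabi--Yau pairs of dimension equal to the coregularity --- and then verifies the base case for $c\le 3$. So your proposal should be read against that reduction.

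Your route and the paper's route diverge sharply after the dlt modification. You propose to read $\pi_1^{\rm reg}(X,B)$ off the \emph{minimal lc centers} $W$ via a van Kampen argument indexed by the dual complex $\mathcal D$. The paper instead observes that, once $D<B$, the divisor $K_X+D$ (after a suitable interpolation $P_Y$ between $D_Y$ and $B_Y$) is not pseudo-effective; running a $(K_Y+P_Y)$-MMP produces a Mori fiber space $Z\to W$, and the fibration exact sequence of Lemma~\ref{lem-fibration} gives
\[
\pi_1^{\rm reg}(F,\cdot)\;\longrightarrow\;\pi_1^{\rm reg}(Z,\cdot)\;\longrightarrow\;\pi_1^{\rm reg}(W,\cdot)\;\longrightarrow\;1,
\]
with $F$ log Fano (hence finite $\pi_1^{\rm reg}$ by Theorem~\ref{thm-Bra21-gen-pair}) and $(W,\cdot)$ again of absolute coregularity $\le c$ but of \emph{strictly smaller dimension}. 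Induction on the dimension, together with Lemma~\ref{lem-virtual-groups}, finishes. Nowhere does the paper try to glue over $\mathcal D$ or use the minimal lc centers directly.

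Your van Kampen step is the genuine gap. The minimal lc centers are closed subvarieties of codimension $n-c$; they do not furnish an open cover of $X_{\rm reg}\setminus\lfloor B\rfloor$, and there is no mechanism in the proposal for relating $\pi_1^{\rm reg}(X,B)$ to the $\pi_1^{\rm reg}(W,B_W)$ beyond a heuristic. Even granting Koll\'ar--Xu finiteness of $\pi_1(\mathcal D)$, one would still need to control the fundamental group of the complement of a tubular neighbourhood of the entire lc locus, the monodromy of the family of lc centers, and the loops around the coefficient-one divisors --- none of which is addressed, and each of which can in principle contribute free factors rather than abelian ones. You yourself flag this as ``the crux'', but in fact it is not a missing detail: it is a different and currently unavailable theorem. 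The MMP/Mori-fiber-space route avoids all of this by trading the combinatorics of $\mathcal D$ for a single fibration with Fano fibre.
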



Though we are not able to prove this conjecture in full generality at the moment, at least the cases $0 \leq c \leq 3$ are proven in the below (in arbitrary dimension) and thus we can provide some strong evidence for the general case.

In particular, the core of the proof is the following reduction to the case of $c$-dimensional \textit{klt Calabi-Yau pairs} (by which we mean nothing else than klt $(X,D)$ with $\mathbb{Q}$-trivial log canonical divisor $K_X+D \sim_{\mathbb{Q}} 0$):

\begin{introthm}\label{introthm-induction-coreg}
    Assume  Conjecture~\ref{introconj-virt-abelian} holds for klt Calabi-Yau pairs of absolute coregularity $c$ and dimension $c$. Then Conjecture~\ref{introconj-virt-abelian} holds for any pair of absolute coregularity $c$.
\end{introthm}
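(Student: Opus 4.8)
The plan is to run an induction on the dimension of $X$, using the minimal model program to reduce the dimension while keeping track of the (absolute) coregularity and the fundamental group of the regular locus. Starting from a projective klt pair $(X,D)$ of absolute coregularity $c$, I would first pass to a $\mathbb{Q}$-factorial dlt modification, or rather replace $(X,D)$ by a crepant birational model, so that after choosing an appropriate $\mathbb{Q}$-divisor in the log canonical linear system we may assume $K_X+D\sim_{\mathbb{Q}}0$, i.e.\ $(X,D)$ is a klt Calabi–Yau pair of coregularity $c$; this uses that absolute coregularity is a crepant-birational invariant and that such modifications do not change $\pi_1^{\rm reg}$ (birational maps that are isomorphisms in codimension one induce isomorphisms of the orbifold fundamental group of the regular locus). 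So the theorem reduces to: \emph{if the conjecture holds for $c$-dimensional klt Calabi–Yau pairs of coregularity $c$, then it holds for all klt Calabi–Yau pairs of coregularity $c$.}

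Next, on a klt Calabi–Yau pair $(X,D)$ of dimension $n>c$ and coregularity $c$, I would run a $K_X$-MMP (equivalently a $(-D)$-MMP). Since $K_X+D\sim_{\mathbb{Q}}0$ and $D\ge0$ is nonzero in the interesting range, $K_X$ is not pseudo-effective and the MMP terminates with a Mori fiber space $X'\to Z$ with $\dim Z<\dim X'=n$. Each step of the MMP is crepant for the pair and preserves coregularity and $\pi_1^{\rm reg}$ (divisorial contractions and flips between klt — or dlt — pairs do not change the fundamental group of the regular locus: contracted loci have codimension $\ge 2$, and one invokes the van Kampen/Nori-type results together with the fact that klt singularities are simply connected in codimension and allow extension of loops). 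On the Mori fiber space, the general fiber $F$ is a klt Calabi–Yau pair $(F,D_F)$ of dimension $n-\dim Z\le n-1$; crucially its coregularity is still $\le c$ (coregularity can only drop under restriction to a general fiber, by adjunction for the dual complex), and the base $Z$, equipped with the discriminant-plus-moduli divisor $B_Z$ from the canonical bundle formula, is again klt Calabi–Yau of coregularity $\le c$ and strictly smaller dimension. Then a homotopy-exact-sequence argument for the orbifold fibration $X'\to Z$ — in the style of the Nori/Campana orbifold-fundamental-group exact sequence — gives
\[
\pi_1^{\rm reg}(F,D_F)\longrightarrow \pi_1^{\rm reg}(X',D')\longrightarrow \pi_1^{\rm orb}(Z,B_Z)\longrightarrow 1,
\]
so $\pi_1^{\rm reg}(X,D)$ is an extension of (a quotient of) $\pi_1^{\rm orb}(Z,B_Z)$ by (a quotient of) $\pi_1^{\rm reg}(F,D_F)$. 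By induction on dimension both outer groups are virtually abelian of rank $\le 2c$, and I would then argue that the extension is again virtually abelian of rank $\le 2c$: an extension of a virtually abelian group by a virtually abelian group is virtually (polycyclic, hence) solvable, and the Calabi–Yau geometry forces the extension to be virtually abelian — e.g.\ via a theorem along the lines of the structure results for fundamental groups of (log) Calabi–Yau varieties (Gromov/Cheeger–Colding type, or the algebraic surrogates used for klt pairs), controlling the rank through the fact that the ranks add up correctly under the fibration (the dual complex of $X$ is built from that of the base and that of the fiber). The base case of the induction is $\dim X=c$, which is exactly the hypothesis.

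The main obstacle I anticipate is the last step: upgrading "virtually solvable / virtually polycyclic extension" to "virtually abelian of rank at most $2c$". Verifying almost-abelianity (rather than merely almost-solvability or almost-nilpotency) of an extension built from a fibration is the delicate point, since a priori an extension of $\mathbb{Z}^{2a}$ by $\mathbb{Z}^{2b}$ need not be virtually abelian — one must use that the total space is genuinely Calabi–Yau (not just of Calabi–Yau type at each stage) to kill the monodromy/cocycle obstruction, and one must make sure the rank estimate $2(a+b)\le 2c$ really does follow from a coregularity/dual-complex computation $\operatorname{coreg}(X)\le\operatorname{coreg}(F)+\operatorname{coreg}(Z)+(\text{something})$. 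A secondary technical point is justifying the $\pi_1^{\rm reg}$-invariance and the homotopy exact sequence at the level of \emph{orbifold} fundamental groups of the regular locus in the presence of klt (not merely quotient) singularities and of the boundary $D$; here I would lean on the already-established machinery for $\pi_1^{\rm reg}$ of klt pairs and on orbifold van Kampen, but the fibration $X'\to Z$ may have multiple fibers and the canonical bundle formula contributes an orbifold structure on $Z$ that has to be matched carefully with the one coming from topology.
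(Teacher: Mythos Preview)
Your overall strategy---induction on dimension via an MMP ending in a Mori fiber space, plus an orbifold homotopy exact sequence---is exactly the skeleton of the paper's proof. However, two steps in your outline fail, and the paper's argument differs from yours precisely at those points.

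\textbf{The extension problem is real, and the fix is not the one you suggest.} You set things up so that the general fiber $(F,D_F)$ is again a klt Calabi--Yau pair, apply induction to it, and then face an extension of a virtually abelian group by a virtually abelian group. As you correctly note, such an extension need not be virtually abelian, let alone of rank $\le 2c$. The paper avoids this entirely: it does \emph{not} make the fiber Calabi--Yau. Instead, it keeps two divisors in play throughout---the original boundary $D$ (which controls $\pi_1^{\rm reg}$) and a larger Calabi--Yau complement $B\ge D$ (which controls the coregularity)---and runs the MMP for an \emph{intermediate} boundary $P$ with $D\le P<B$ (constructed via a dlt modification and Lemma~\ref{lem-dlt-group}). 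Because $K+P$ is strictly smaller than the Calabi--Yau divisor $K+B$, the resulting Mori fiber space has \emph{log Fano} fibers with respect to $P$, and hence $\pi_1^{\rm reg}(F,P_F)$ is \emph{finite} by Theorem~\ref{thm-Bra21-gen-pair} (via Lemma~\ref{lem-Fano-extraction}). The extension is then finite-by-(virtually abelian of rank $\le 2c$), which \emph{is} virtually abelian of rank $\le 2c$ by the elementary Lemma~\ref{lem-virtual-groups}. No deep structure theorem about Calabi--Yau monodromy is needed; the whole point is to never let the fiber be Calabi--Yau in the first place. Induction is used only on the base.

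\textbf{Your initial reduction is also unjustified.} You propose to ``choose an appropriate $\mathbb{Q}$-divisor in the log canonical linear system'' and thereby assume $K_X+D\sim_{\mathbb Q}0$. But absolute coregularity $c$ only gives a generalized Calabi--Yau structure $(X,B,\mathbf{M})$ with $B\ge D$, and replacing $D$ by $B$ changes $\pi_1^{\rm reg}$: there is in general no surjection $\pi_1^{\rm reg}(X,B)\to\pi_1^{\rm reg}(X,D)$ (e.g.\ $X=\mathbb P^1$, $D=\tfrac12 p$, $B=\tfrac23 p$ gives $\mathbb Z/3\not\twoheadrightarrow\mathbb Z/2$). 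The paper handles this by never replacing $D$: it carries the pair $D\le P\le B$ through every birational step, using Lemmas~\ref{lem-dlt-group}--\ref{lem-MMP-group} to produce a chain of surjections $\pi_1^{\rm reg}(Z',R_{Z'})\twoheadrightarrow\cdots\twoheadrightarrow\pi_1^{\rm reg}(X,D)$, while $B$ is used only to witness coregularity and to guarantee that $K+P$ is not pseudo-effective.
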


As we already mentioned, it is not known at the moment if smooth loci of Calabi-Yau klt pairs have almost abelian regular fundamental group. However, there is strong evidence not only coming from the manifold (and orbifold) case, but also from the fundamental groups of the whole space in the even dimensional singular case, as~\cite[Thm.~I]{GGK19} shows.

A very recent paper~\cite{GLM23} has investigated (almost \emph{nilpotency} of) the fundamental group in the more general case of \emph{log canonical} Calabi-Yau pairs and obtained definitive results in dimension two. Note that in the klt case and up to dimension two, almost abelianity of $\pi_1^{\rm reg}$ is known due to~\cite{CC14}. We settle the three-dimensional case in the below.

However, in the case of coregularity $0$,  not only Theorem~\ref{introthm-induction-coreg} already implies Conjecture~\ref{introconj-virt-abelian} on its own, we even get a finiteness statement:

\begin{introthm}\label{introthm-coreg0}
Let $(X, D)$ be a projective klt pair with absolute coregularity 0. Then $\pi_1^{\rm reg}(X,D)$ is finite.
\end{introthm}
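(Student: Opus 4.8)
The plan is to obtain Theorem~\ref{introthm-coreg0} as the degenerate case $c=0$ of Theorem~\ref{introthm-induction-coreg}, so that no genuinely new argument beyond that reduction is needed.

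First I would verify the hypothesis of Theorem~\ref{introthm-induction-coreg} for $c=0$, namely Conjecture~\ref{introconj-virt-abelian} for klt Calabi--Yau pairs of absolute coregularity $0$ and dimension $0$. Such a pair is a reduced point with empty boundary: its regular locus is the point itself, so $\pi_1^{\rm reg}$ is the trivial group, which is in particular virtually abelian of rank $0 = 2c$. Hence the hypothesis holds, and Theorem~\ref{introthm-induction-coreg} yields, for every projective klt pair $(X,D)$ of absolute coregularity $0$, that $\pi_1^{\rm reg}(X,D)$ is virtually abelian of rank at most $0$. Finally I would turn this into finiteness: $\pi_1^{\rm reg}(X,D)$ is a quotient of the topological fundamental group of the quasi-projective variety $X_{\rm reg}$ and is therefore finitely generated, and a finitely generated group admitting a finite-index abelian subgroup of torsion-free rank $0$ is finite. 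This is exactly the assertion of the theorem.

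Since all the content has been pushed into Theorem~\ref{introthm-induction-coreg}, there is essentially no obstacle remaining at this stage; the difficulty has been relocated to that induction step. For coregularity $0$ one expects that step to proceed by passing to a $\mathbb{Q}$-factorial dlt modification of a crepant log Calabi--Yau structure whose dual complex has maximal dimension, and then controlling how $\pi_1^{\rm reg}$ interacts with the boundary strata well enough to conclude by descending induction on dimension, bottoming out at the point. If Theorem~\ref{introthm-induction-coreg} were not available, the direct argument would follow the same path, and its hard part would be precisely this comparison between $\pi_1^{\rm reg}$ of the total space and the orbifold fundamental groups attached to the open strata of the dual complex.
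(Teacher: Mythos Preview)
Your approach is essentially identical to the paper's: both simply invoke Theorem~\ref{introthm-induction-coreg} with $c=0$, observing that the hypothesis (Conjecture~\ref{introconj-virt-abelian} for klt Calabi--Yau pairs of dimension~$0$) is vacuous, and conclude. One small slip in your write-up: $\pi_1^{\rm reg}(X,D)$ is a quotient of $\pi_1\bigl(X_{\rm reg}\setminus\mathrm{supp}(D_{\rm st})\bigr)$, not of $\pi_1(X_{\rm reg})$ (the map goes the other way); but that is still the fundamental group of a smooth quasi-projective variety, so finite generation holds and your deduction that ``virtually abelian of rank~$0$'' implies ``finite'' goes through unchanged.
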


We subsume the cases of coregularity $1 \leq c \leq 3$ in the following:

\begin{introthm}\label{introthm-coreg12}
Let $1 \leq c \leq 3$ and $(X, D)$ be a projective klt pair with absolute coregularity $c$. Then $\pi_1^{\rm reg}(X,D)$ admits a normal abelian subgroup of finite index and rank at most $2c$.
\end{introthm}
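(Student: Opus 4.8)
The plan is to combine Theorem~\ref{introthm-induction-coreg} with the structure theory of klt Calabi--Yau pairs in dimensions $\le 3$. By Theorem~\ref{introthm-induction-coreg}, Theorem~\ref{introthm-coreg12} follows once Conjecture~\ref{introconj-virt-abelian} is known for klt Calabi--Yau pairs of dimension $c$, for each $c\in\{1,2,3\}$; so fix such a pair $(X,D)$, i.e.\ projective klt with $K_X+D\sim_{\mathbb Q}0$ and $\dim X=c$, and let us show that $\pi_1^{\rm reg}(X,D)$ has a finite-index abelian normal subgroup of rank $\le 2c$.

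\textbf{Step 1 (almost abelianity and a free abelian cover).} For $c=1$, the klt condition forces the orbifold multiplicities of $(X,D)$ to be small enough that its orbifold Euler characteristic is $\ge 0$; hence $\pi_1^{\rm reg}(X,D)$ is finite or virtually $\mathbb Z^{2}$. For $c=2$ almost abelianity of $\pi_1^{\rm reg}$ is the theorem of~\cite{CC14}. For $c=3$ it is the almost abelianity of $\pi_1^{\rm reg}$ for projective klt Calabi--Yau threefold pairs proved elsewhere in this paper, and this is the deepest input. In all three cases $\pi_1^{\rm reg}(X,D)$ is finitely generated and virtually abelian, so it contains a finite-index free abelian normal subgroup $N\cong\mathbb Z^{r}$; it remains to prove $r\le 2c$.

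\textbf{Step 2 (rank bound via the orbifold Albanese).} Let $f\colon(X',D')\to(X,D)$ be the quasi-\'etale (orbifold-\'etale) cover corresponding to $N$. Then $(X',D')$ is again a projective klt Calabi--Yau pair with $\dim X'=c$ and $\pi_1^{\rm reg}(X',D')\cong\mathbb Z^{r}$, so its orbifold first Betti number is $r$. Since $(X',D')$ has orbifold Kodaira dimension $0$, an orbifold version of the Ueno--Kawamata theorem shows that the orbifold Albanese map of $(X',D')$ surjects onto an abelian variety of dimension $r/2$; as that dimension is at most $\dim X'=c$, we get $r\le 2c$. (For $c=1$ this is the classification of Euclidean orbifold curves; for $c=2,3$ one may instead read the bound off the Beauville--Bogomolov-type decomposition/structure results available for klt Calabi--Yau pairs in low dimension, where the torus factor has dimension at most that of the pair.) This proves Conjecture~\ref{introconj-virt-abelian} in dimension $c$ for $c\le 3$, and Theorem~\ref{introthm-induction-coreg} then yields Theorem~\ref{introthm-coreg12}.

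\textbf{Main obstacle.} The weight of the argument lies in two places. First, the case $c=3$ rests entirely on almost abelianity of $\pi_1^{\rm reg}$ for klt Calabi--Yau threefold pairs, which is a separate main theorem of the paper and uses the three-dimensional minimal model program and the classification of Calabi--Yau threefolds. Second, producing the \emph{sharp} rank bound $2c$ rather than mere finiteness requires real geometric input: one must make the quasi-\'etale/orbifold-\'etale correspondence precise enough that the cover realizing $N$ is again klt Calabi--Yau of the same dimension, and one must run the orbifold Albanese and orbifold Ueno--Kawamata theory in the presence of both the boundary $D$ and klt singularities --- a setting in which the underlying variety $X'$ may be uniruled and its ordinary Albanese trivial, so the orbifold structure is essential. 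Ensuring that Step~2 works uniformly for $c=1,2,3$, i.e.\ that the ``abelian part'' of a klt Calabi--Yau pair has dimension at most that of the pair, is the technical heart.
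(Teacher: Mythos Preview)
Your overall architecture is exactly the paper's: invoke Theorem~\ref{introthm-induction-coreg} to reduce to klt Calabi--Yau pairs of dimension $c$, and then verify Conjecture~\ref{introconj-virt-abelian} for $c\in\{1,2,3\}$ using, respectively, the classification of orbifold curves, \cite{CC14}, and Theorem~\ref{introthm-3dim}. On this level the proposal is correct.

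The one substantive divergence is your Step~2. You split the low-dimensional input into ``almost abelianity'' plus a separate rank bound obtained via an orbifold Albanese / orbifold Ueno--Kawamata argument. In the paper this split is unnecessary: the cited results already contain the rank bound. Proposition~\ref{prop-dim1} computes the four Euclidean orbifold curves explicitly and exhibits $\mathbb{Z}^2$; \cite[Theorem~2.2]{CC14} (Proposition~\ref{prop-dim2}) states virtual abelianity of rank $\le 4$, not merely almost abelianity; and Theorem~\ref{introthm-3dim} gives rank $\le \max(4,2\tilde q(X))\le 6$. So the paper's proof is literally two sentences: cite these three inputs and apply Theorem~\ref{thm-induction-coreg}.

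Your Albanese route is morally right but, as you yourself flag, it is not free: making the orbifold first Betti number equal $2\dim(\mathrm{Alb}^{\mathrm{orb}})$ and proving surjectivity of the orbifold Albanese for $\kappa=0$ in the klt (not merely quotient-singularity) setting with nontrivial boundary is not standard literature, and the underlying $X'$ may well be rationally connected so the ordinary Albanese says nothing. You avoid an actual gap only because your parenthetical falls back on the same low-dimensional structure results the paper uses. The cleaner write-up is to drop Step~2 and simply note that the three inputs you already cite in Step~1 come with the bounds $r\le 2,4,6$ built in.
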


While the rank of the abelian subgroup can be controlled by the absolute coregularity, regardless of the dimension, the index of any abelian subgroup cannot always be controlled even if one fixes both the coregularity and the dimension. 
To deal with this problem, we extend to the class of virtually solvable groups. This yields the following effective bounds:

\begin{introthm}\label{introthm-solv012}
    There are constants $i(c,d)$, depending only on the dimension and coregularity, such that for any klt pair $(X,D)$ of coregularity $0 \leq c \leq 2$, the fundamental group $\pi_1^{\rm reg}(X,D)$ has a solvable subgroup of length at most $2d-1$ and index at most $i(c,d)$.
\end{introthm}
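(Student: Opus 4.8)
The strategy is to revisit the proofs of Theorem~\ref{introthm-induction-coreg} and Theorem~\ref{introthm-coreg12} and make every step quantitative, feeding in the effective boundedness of Fano type varieties. As in the proof of Theorem~\ref{introthm-induction-coreg}, after replacing $(X,D)$ by a $\mathbb{Q}$-factorial dlt modification (which changes neither $X_{\rm reg}$ in codimension one nor $\pi_1^{\rm reg}$, since the modification is crepant) and running a suitable minimal model program, one produces a tower
\[
X = X_0 \longrightarrow X_1 \longrightarrow \dots \longrightarrow X_k, \qquad k \le d,
\]
of Fano type fibrations and abelian type fibrations (the latter isotrivial, with fibers of Kodaira dimension zero), with a klt Calabi--Yau pair $(X_k,D_k)$ of dimension $c\le 2$ at the bottom. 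The first new input is that, by Birkar's solution of the Borisov--Alexeev--Borisov conjecture, the fibers occurring at each step move in a bounded family depending only on $c$ and $d$; consequently the local fundamental groups of the klt singularities met along the way, and the fundamental groups of the Fano type fibers, are finite of order bounded by a constant $b(c,d)$.

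For the base case, when $c\le 2$ the pair $(X_k,D_k)$ is a klt Calabi--Yau curve or surface, so by~\cite{CC14} the group $\pi_1^{\rm reg}(X_k,D_k)$ is virtually abelian of rank $\le 2c$. In this low-dimensional range one checks moreover, using the classical boundedness of Calabi--Yau surfaces and of the finite quotients entering their construction, that a normal abelian subgroup of rank $\le 2c$ can be chosen of index bounded by a constant depending only on $c$; in particular $\pi_1^{\rm reg}(X_k,D_k)$ already contains a solvable subgroup of length $\le 2$ and bounded index.

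Then I would climb the tower. Each fibration $X_i\to X_{i+1}$ yields, via the orbifold homotopy exact sequence for regular loci used in the earlier sections, a short exact sequence
\[
1 \longrightarrow N_i \longrightarrow \pi_1^{\rm reg}(X_i,D_i) \longrightarrow \pi_1^{\rm reg}(X_{i+1},D_{i+1}) \longrightarrow 1,
\]
where $N_i$ is a quotient of the fundamental group of a general fiber, hence either finite of order $\le b(c,d)$ or virtually free abelian of bounded index and rank $2a_i$ with $\sum_i a_i\le c$. Inducting on $k$ from the base case, at each step one intersects with the preimage of the solvable subgroup already produced below and extends it by a bounded-index solvable subgroup of $N_i$: the index is multiplied by at most a constant depending on $c$ and $d$, while the derived length grows by at most two — one layer to absorb the (\emph{a priori} nonabelian, but bounded) finite contribution of $N_i$, one for the abelian part. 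A more careful accounting, using that the Fano type steps only affect the boundedly many outermost (finite) layers, brings the total derived length down to $2d-1$; combining with the base case produces the desired solvable subgroup of $\pi_1^{\rm reg}(X,D)$ of length $\le 2d-1$ and index $\le i(c,d)$ for an explicit $i(c,d)$.

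The crux is the effective control of the finite parts $N_i$ and of their interaction across the tower: nothing formal prevents $\pi_1^{\rm reg}(X,D)$ from surjecting onto an unbounded finite group admitting no bounded-index solvable subgroup of bounded length (say a large power of a fixed nonabelian simple group), and it is precisely this that must be excluded. This is where one genuinely needs effective boundedness of fundamental groups of Fano type varieties and of local fundamental groups of klt singularities of dimension $\le d$ — inputs that are available exactly in the regime $c\le 2$, where the Calabi--Yau base has dimension $\le 2$ and the fibers have dimension $\le d$, and which is the reason this theorem stops one coregularity short of Theorem~\ref{introthm-coreg12}. The remaining difficulties are bookkeeping: propagating the boundedness constants through the MMP and through the finitely many possible shapes of the tower, and checking once more that none of the crepant modifications used along the way enlarges $\pi_1^{\rm reg}$.
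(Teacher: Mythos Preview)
Your overall strategy---reduce by an MMP to a tower of fibrations, bound the fiber and base contributions separately, and combine via the orbifold exact sequence---is the same as the paper's (Proposition~\ref{prop-induction-solvable} plus Lemma~\ref{lem-virt-sol}). But two steps in your proposal are genuinely off. First, there are no ``abelian type fibrations'' in the tower. Starting from a klt pair of Calabi--Yau type that is \emph{not} itself Calabi--Yau, the $(K+P)$-MMP lands on a Mori fiber space, so every step is a Fano type fibration; the only Calabi--Yau object is the base $X_k$ of dimension $c$. Consequently every kernel $N_i$ is a quotient of the fundamental group of a Fano type pair, and the correct uniform bound on it is \cite[Theorem~3]{BFMS20} (an abelian subgroup of index $\le c(d)$), not an ad hoc dichotomy between finite and virtually free abelian pieces with $\sum a_i\le c$. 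Relatedly, a $\mathbb{Q}$-factorial dlt modification does \emph{not} leave $\pi_1^{\rm reg}$ unchanged: it extracts divisors, and the paper only obtains a surjection after adjusting the boundary (Lemma~\ref{lem-dlt-group}); your parenthetical ``since the modification is crepant'' does not justify an isomorphism.

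Second, and more seriously, your base case fails for $c=2$. The paper explicitly points to \cite[Example~7.3]{GLM23} for a family of klt Calabi--Yau surfaces whose $\pi_1^{\rm reg}$ has no abelian subgroup of bounded index; so ``classical boundedness of Calabi--Yau surfaces and of the finite quotients entering their construction'' does \emph{not} yield a normal abelian subgroup of bounded index, and your deduction of a solvable subgroup of length $\le 2$ and bounded index has no foundation. The actual input here is the recent effective result \cite[Theorem~3]{GLM23} for klt Calabi--Yau surfaces, which is precisely what makes the theorem stop at $c\le 2$. Once that is in place, the induction is exactly the paper's: feed the surface bound and the Fano fiber bound from \cite{BFMS20} into Lemma~\ref{lem-virt-sol}, which does the length ($\le 2d-1$) and index ($\le c(d)!\,i(d-1,c)$) bookkeeping you sketch at the end.
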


\subsection{Fundamental groups of klt Calabi-Yau pairs and the augmented irregularity}

The \textit{augmented irregularity} $\tilde{q}(X)$ of a normal projective variety $X$ as defined by Kawamata is the maximum of the irregularities $q(X')=h^1(X',\mathcal{O}_X')$ of all finite quasi-\'etale covers $X'\to X$. In the case of manifolds with $K_X=0$, the augmented irregularity is precisely the dimension of the \textit{Albanese variety} - an Abelian variety with fundamental group $\mathbb{Z}^{\tilde{q}(X)}$  - of a respective quasi-\'etale cover, which can be split off and which is the `reason' for the abelian non-finite subgroup in the fundamental group. Conjecturally, this holds in the singular log case as well. In this direction, we state a generalization of~\cite[Corollary.~3.6]{GKP16b} and~\cite[Prop.~7.5, `Torus Cover']{GGK19} (which is stated for projective varieties with canonical singularities and $K_X=0$) to the setting of a log pair:

\begin{introthm}[Torus Covers for pairs]
\label{introthm-torus-covers}
    Let $(X,D)$ be a projective klt pair with $K_X+D \sim_{\mathbb{Q}} 0$. Then there exists a quasi-\'etale cover $Y \to X$, such that $Y= A \times Z$ has a product structure with the following properties:
    \begin{enumerate}
        \item $A$ is an Abelian variety of dimension $\tilde{q}(X)$.
        \item There is a boundary $D_Z$ on $Z$ such that $(Z,D_Z)$ is klt and $K_Z+D_Z\sim_{\mathbb{Q}} 0$. Moreover, $\tilde{q}(Z)=0$.
    \end{enumerate}
\end{introthm}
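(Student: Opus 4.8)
The plan is to mimic the proof of the torus cover statements in the $K_X = 0$ case (\cite{GKP16b, GGK19}) while carefully carrying the boundary $D$ along, and to replace the deep decomposition-theorem input used there by the available Beauville--Bogomolov type splitting for klt Calabi--Yau varieties together with the behaviour of $\widetilde{q}$ under quasi-\'etale covers. First I would pass to a quasi-\'etale cover realizing the augmented irregularity: by definition of $\widetilde{q}(X)$ there is a finite quasi-\'etale cover $X_1 \to X$ with $q(X_1) = \widetilde{q}(X) = \widetilde{q}(X_1)$ (the last equality because $\widetilde{q}$ is a birational-to-quasi-\'etale invariant and is already maximized). Pulling back $D$ to a boundary $D_1$ on $X_1$ keeps the pair klt with $K_{X_1} + D_1 \sim_{\mathbb{Q}} 0$, since quasi-\'etale morphisms are \'etale in codimension one and hence preserve log canonical $\mathbb{Q}$-triviality and klt-ness.

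Next I would produce the Albanese splitting. Because $(X_1, D_1)$ is klt Calabi--Yau, $X_1$ has canonical (indeed klt) singularities and $-K_{X_1} \sim_{\mathbb{Q}} D_1 \geq 0$ is effective, so $K_{X_1}$ is not big; by the structure theory for the Albanese morphism of varieties with vanishing (or non-positive) Kodaira dimension --- in the klt Calabi--Yau setting this is the content of the singular Beauville--Bogomolov decomposition, see the results of Greb--Kebekus--Peternell and Druel --- the Albanese map $\mathrm{alb}\colon X_1 \to \mathrm{Alb}(X_1)$ is, after a further quasi-\'etale cover $Y \to X_1$, an isotrivial fibration that splits as a product $Y \cong A \times Z$, where $A$ is an abelian variety isogenous to $\mathrm{Alb}(X_1)$, hence $\dim A = q(X_1) = \widetilde{q}(X)$, and $Z$ is the fiber. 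Here one uses that after the splitting the map $Y \to A$ is the (normalized) Albanese and that a product with an abelian variety forces the Albanese to split off; one also checks $\widetilde{q}(X) = \widetilde{q}(X_1) = \widetilde{q}(Y)$, so no irregularity is lost or gained, which pins down $\dim A$ exactly. Writing $K_Y + D_Y \sim_{\mathbb{Q}} 0$ for the pulled-back pair and using that $A$ has trivial canonical class, adjunction along the product gives a boundary $D_Z$ on $Z$ with $D_Y = \mathrm{pr}_Z^* D_Z$, $(Z, D_Z)$ klt, and $K_Z + D_Z \sim_{\mathbb{Q}} 0$.

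Finally I would verify $\widetilde{q}(Z) = 0$. Suppose not: then some finite quasi-\'etale cover $Z' \to Z$ has $q(Z') > 0$. Pulling back along $A \times Z' \to A \times Z = Y \to X$ produces a finite quasi-\'etale cover of $X$ (composition of quasi-\'etale morphisms is quasi-\'etale) whose irregularity is at least $\dim A + q(Z') = \widetilde{q}(X) + q(Z') > \widetilde{q}(X)$, contradicting the maximality of the augmented irregularity. Hence $\widetilde{q}(Z) = 0$, completing the proof; the composite $Y \to X_1 \to X$ is the desired quasi-\'etale cover.

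The main obstacle I anticipate is the splitting step: extracting an honest \emph{product} decomposition $Y = A \times Z$ (not merely an isotrivial or even trivial fibration over an abelian variety) in the singular klt setting requires invoking the singular Beauville--Bogomolov / Albanese-splitting results and checking their hypotheses apply to klt Calabi--Yau \emph{pairs} --- in particular that the boundary $D$ does not obstruct the splitting and descends to $Z$. If the literature only provides the splitting for the underlying variety, one has to argue separately that $D_Y$ is a pullback from $Z$, e.g. by looking at the action of $A$ (via translations, after a further cover trivializing the fibration) on $Y$ and on $D_Y$ and using rigidity of the components of $D_Y$ under this connected group action.
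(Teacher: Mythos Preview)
Your overall strategy---pass to a cover realizing $\tilde q$, split off the abelian factor via the Albanese, then deduce $\tilde q(Z)=0$ by maximality---is exactly the paper's. The paper's proof is in fact a single sentence: run the argument of \cite[Cor.~3.6]{GKP16b} verbatim, but replace the variety-level Albanese splitting input \cite[Prop.~3.3]{GKP16b} by Ambro's \cite[Thm.~4.8]{Amb05} (with \cite{JX20} as a further reference). Ambro's theorem is precisely the log-pair version of the Albanese splitting: for a klt pair $(X,D)$ with $K_X+D\sim_{\mathbb Q}0$, after a suitable \'etale base change the Albanese map becomes a product and the boundary is pulled back from the fibre factor. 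This resolves in one stroke the obstacle you correctly anticipated at the end.

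Where your write-up is slightly off is the specific tool invoked for the splitting. The singular Beauville--Bogomolov decomposition of Greb--Kebekus--Peternell and Druel applies to varieties with $K_X\equiv 0$, not to pairs with $K_X+D\equiv 0$; in the latter case $K_X\sim_{\mathbb Q}-D$ is anti-effective and need not be torsion, so the BB machinery does not apply to the underlying variety, and your separate rigidity argument for descending $D_Y$ to $Z$ would indeed be needed. Swapping in Ambro's theorem is both the correct fix and the shortcut: it is tailored to the pair setting and makes the descent of the boundary automatic.
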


Obviously, what is missing at this point is to prove finiteness of the fundamental group of the smooth locus in the case that $\tilde{q}(X)=0$. Before we come to this task, we note that combining Theorem~\ref{introthm-torus-covers} with Conjecture~\ref{introconj-virt-abelian}, we get the following connection between coregularity and augmented irregularity.

\begin{introcor}
    Assume Conjecture~\ref{introconj-virt-abelian} holds, then the inequality
    $$
    \tilde{q}(X) \leq {\rm \hat{coreg}}(X,D)
    $$
    holds for  projective klt pairs $(X,D)$ with $K_X+D \sim_{\mathbb{Q}} 0$.
\end{introcor}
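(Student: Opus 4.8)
The plan is to feed the Torus Cover Theorem~\ref{introthm-torus-covers} into the group theory provided by Conjecture~\ref{introconj-virt-abelian} applied directly to $(X,D)$: the Abelian variety factor produced by the torus cover forces a free abelian subgroup of rank $2\tilde{q}(X)$ inside $\pi_1^{\rm reg}(X,D)$, while the conjecture bounds the virtual rank of that group by $2\,{\rm \hat{coreg}}(X,D)$.

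First I would apply Theorem~\ref{introthm-torus-covers} to the klt pair $(X,D)$ with $K_X+D\sim_{\mathbb{Q}}0$, obtaining a quasi-\'etale cover $f\colon Y\to X$ with a product decomposition $Y=A\times Z$, where $A$ is an Abelian variety with $\dim A=\tilde{q}(X)$ and $(Z,D_Z)$ is klt with $K_Z+D_Z\sim_{\mathbb{Q}}0$ (the additional vanishing $\tilde{q}(Z)=0$ is not needed here). Let $D_Y$ be the log pullback, so that $(Y,D_Y)\to(X,D)$ is crepant and, under the product decomposition, $D_Y=\operatorname{pr}_Z^{*}D_Z$; in particular $A$ carries the trivial orbifold structure. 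Since $f$ is quasi-\'etale and crepant, the usual functoriality of the orbifold fundamental group of the regular locus under such covers shows that $\pi_1^{\rm reg}(Y,D_Y)$ sits inside $\pi_1^{\rm reg}(X,D)$ as a subgroup of finite index (equal to $\deg f$). On the other hand the orbifold $(Y_{\rm reg},D_Y|_{Y_{\rm reg}})$ is the product of $A$ (with empty boundary) and $(Z_{\rm reg},D_Z|_{Z_{\rm reg}})$, so the K\"unneth formula for orbifold fundamental groups gives
$$
\pi_1^{\rm reg}(Y,D_Y)\;\cong\;\pi_1(A)\times\pi_1^{\rm reg}(Z,D_Z)\;\cong\;\mathbb{Z}^{2\tilde{q}(X)}\times\pi_1^{\rm reg}(Z,D_Z),
$$
so that $\pi_1^{\rm reg}(Y,D_Y)$ contains a copy of $\mathbb{Z}^{2\tilde{q}(X)}$.

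The remaining step is purely group-theoretic. For a virtually abelian group $G$ the \emph{virtual rank} $\operatorname{rk}(G)$ --- the $\mathbb{Z}$-rank of any finite-index abelian subgroup --- is well defined, is unchanged under passing to a finite-index subgroup, and satisfies $\operatorname{rk}(G)\ge k$ whenever $G$ contains a subgroup isomorphic to $\mathbb{Z}^k$ (intersect a finite-index abelian subgroup with that copy of $\mathbb{Z}^k$). By Conjecture~\ref{introconj-virt-abelian}, $G:=\pi_1^{\rm reg}(X,D)$ is virtually abelian with $\operatorname{rk}(G)\le 2\,{\rm \hat{coreg}}(X,D)$. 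Combining this with the finite-index subgroup $\pi_1^{\rm reg}(Y,D_Y)\supseteq\mathbb{Z}^{2\tilde{q}(X)}$ from the previous paragraph gives
$$
2\tilde{q}(X)\;\le\;\operatorname{rk}\big(\pi_1^{\rm reg}(Y,D_Y)\big)\;=\;\operatorname{rk}(G)\;\le\;2\,{\rm \hat{coreg}}(X,D),
$$
and dividing by $2$ yields $\tilde{q}(X)\le{\rm \hat{coreg}}(X,D)$.

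The only point requiring care is the finite-index statement $[\pi_1^{\rm reg}(X,D):\pi_1^{\rm reg}(Y,D_Y)]<\infty$: one must check that a quasi-\'etale cover is compatible with the chosen orbifold multiplicities along $D$ (ramification over a component of $D$ divides its multiplicity) and that deleting the codimension $\ge 2$ non-\'etale locus inside $X_{\rm reg}$ does not alter the relevant fundamental group. For klt pairs both are standard and would be cited from the preliminaries, so I do not expect a genuine obstacle once Theorem~\ref{introthm-torus-covers} and Conjecture~\ref{introconj-virt-abelian} are available; the rest of the argument is formal.
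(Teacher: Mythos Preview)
Your argument is correct and is precisely the combination of Theorem~\ref{introthm-torus-covers} with Conjecture~\ref{introconj-virt-abelian} that the paper has in mind; the paper does not spell out a proof beyond stating that the corollary is obtained by combining these two inputs. Your extra care about the finite-index inclusion $\pi_1^{\rm reg}(Y,D_Y)\hookrightarrow\pi_1^{\rm reg}(X,D)$ for a quasi-\'etale crepant cover, and the K\"unneth identification on $A\times Z$, simply makes explicit what is implicit there.
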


While at least in dimensions three and four and $X$ having canonical singularities with $K_X\sim_{\mathbb{Q}} 0$, we know that the fundamental group of the \textit{whole space} is virtually abelian~\cite[Corollary~8.25]{GKP16b}, we did not have any statement yet for the fundamental group of the smooth locus in dimensions greater than two.

Here, combining several of the known methods, we can prove the respective statement in the three-dimensional case\footnote{In the first version of this manuscript, we only dealt with the case of trivial boundary. We sincerely thank Stefano Filipazzi and Mirko Mauri for bringing to our attention that our methods already imply the case of nontrivial boundary.}:

\begin{introthm}
\label{introthm-3dim}
    Let $(X,D)$ be a klt log Calabi-Yau threefold, i.e. $(X,D)$ is klt and $K_X+D \sim_{\mathbb{Q}} 0$.
Let  $\tilde{q}(X)$ be the augmented irregularity of $X$. 
Then, the fundamental group $\pi_1(X_{\rm reg}, D)$ is virtually abelian of rank at most $\max(4, 2\, \tilde{q}(X))$.
\end{introthm}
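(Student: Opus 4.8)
The plan is to reduce, via the Torus Cover Theorem~\ref{introthm-torus-covers}, to a klt Calabi--Yau pair with vanishing augmented irregularity, and then to exhaust the possible shapes of the ``non-abelian part'' using the structure theory of threefolds together with~\cite{CC14} in dimension $\le 2$. First I would apply Theorem~\ref{introthm-torus-covers} to $(X,D)$ to obtain a quasi-\'etale cover $Y=A\times Z\to X$ with $A$ an Abelian variety of dimension $\tilde q(X)$ and $(Z,D_Z)$ a klt pair with $K_Z+D_Z\sim_{\mathbb Q}0$ and $\tilde q(Z)=0$, so that $\dim Z=3-\tilde q(X)$. As $A$ is smooth, $Y_{\rm reg}=A\times Z_{\rm reg}$ and $D_Y|_{Y_{\rm reg}}$ is pulled back from $Z_{\rm reg}$; the product formula for (orbifold) fundamental groups then gives a splitting
\[
\pi_1^{\rm reg}(Y,D_Y)\;\cong\;\pi_1(A)\times\pi_1^{\rm reg}(Z,D_Z)\;\cong\;\mathbb Z^{2\tilde q(X)}\times\pi_1^{\rm reg}(Z,D_Z),
\]
and since $Y\to X$ is quasi-\'etale, the left-hand side is a finite-index subgroup of $\pi_1^{\rm reg}(X,D)$. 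It therefore suffices to show that $\pi_1^{\rm reg}(Z,D_Z)$ is virtually abelian of rank at most $\max(4-2\tilde q(X),0)$, since $2\tilde q(X)+\max(4-2\tilde q(X),0)=\max(4,2\tilde q(X))$.

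The cases $\dim Z\le 2$ are treated directly. If $\dim Z=0$ there is nothing to prove. If $\dim Z=1$, then $Z\cong\mathbb P^1$ with $\deg D_Z=2$, and since $\tilde q(Z)=0$ a routine check shows $(Z,D_Z)$ is of spherical or bad orbifold type -- a Euclidean cone structure would, via its elliptic orbifold cover, produce a cover of positive irregularity, enlarging the Abelian factor of the torus cover and contradicting $\dim A=\tilde q(X)$ -- so $\pi_1^{\rm reg}(Z,D_Z)$ is finite. If $\dim Z=2$, then $\pi_1^{\rm reg}(Z,D_Z)$ is virtually abelian by~\cite{CC14}; applying the two-dimensional instance of Theorem~\ref{introthm-torus-covers} to $(Z,D_Z)$ and using $\tilde q(Z)=0$ bounds its rank by $2$ (a rank-$4$ abelian part would force $\tilde q(Z)\ge 2$, and a rank-$3$ one cannot occur on a surface). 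In either case the total rank is at most $4$.

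The essential case is $\tilde q(X)=0$, where $Z=Y$ is a klt Calabi--Yau threefold with $\tilde q(Z)=0$ and I must show that $\pi_1^{\rm reg}(Z,D_Z)$ is virtually abelian of rank at most $4$. Here I would pass to a $\mathbb Q$-factorial terminalization and invoke the decomposition theory for klt Calabi--Yau varieties (Beauville--Bogomolov-type splittings of the tangent sheaf, holonomy, and the three-dimensional MMP and abundance theorem) to produce, after a further quasi-\'etale cover, a suitable fibration $f\colon Z\to W$. If $W$ is a point, $Z$ is of Fano type / rationally connected and $\pi_1^{\rm reg}(Z,D_Z)$ is finite by the same finiteness mechanism used for Theorem~\ref{introthm-coreg0} (reducing to finiteness of local fundamental groups of klt singularities and of regional fundamental groups of Fano-type pairs). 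If $0<\dim W\le 2$, I would equip $f$ with its Campana orbifold base $(W,D_W)$ -- again a klt Calabi--Yau pair, of dimension $\le 2$, covered by the previous cases -- and use the orbifold homotopy exact sequence of $f$ to write $\pi_1^{\rm reg}(Z,D_Z)$ as an extension of $\pi_1^{\rm reg}(W,D_W)$ by the (finite) image of the regional fundamental group of a general fiber, giving rank $\le 4$. Finally, if $f$ is birational, $Z$ is a ``strict'' Calabi--Yau threefold with $\tilde q(Z)=0$ admitting no nontrivial fibration, and one appeals to the known virtual abelianity -- in fact finiteness -- of $\pi_1$ of the whole space for threefolds (\cite{GKP16b}, cf.~\cite{GGK19}), together with a passage from $\pi_1(Z)$ to $\pi_1(Z_{\rm reg})$. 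The nontrivial-boundary case is handled in parallel throughout, $D_Z$ only ever feeding into the orbifold base $D_W$ and into the fibers.

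The step I expect to be the main obstacle is this last passage from $\pi_1(Z)$ to $\pi_1(Z_{\rm reg})$ in the strict threefold case: a priori $\pi_1(Z_{\rm reg})$ surjects onto $\pi_1(Z)$ with kernel normally generated by the (finite) local fundamental groups of the klt singularities, but a group normally generated by finite subgroups can fail to be virtually abelian, so finiteness of $\pi_1(Z)$ does not by itself suffice. Overcoming this requires genuinely three-dimensional input -- the MMP, abundance, and the structure of klt singularities on a crepant model of $(Z,D_Z)$ -- to show that every generator of $\pi_1(Z_{\rm reg})$ comes either from such a finite link or from a lower-dimensional orbifold base already under control; equivalently, to upgrade the known decomposition theorems from $\pi_1$ of the whole space to $\pi_1^{\rm reg}$ of the pair in dimension three.
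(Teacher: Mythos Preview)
Your reduction via the torus cover and your treatment of the fibered case are on the right track and broadly match the paper (though the paper produces the fibration more directly: when $D\neq 0$, $K_X\sim_{\mathbb Q}-D$ is not pseudo-effective, so a plain $K_X$-MMP lands on a Mori fiber space---no ``decomposition theory'' needed---and the subcase of a point base is sent either to log Fano or, via an orbifold index-one cover, back to $D=0$). The gap you yourself flag, the strict Calabi--Yau case $D=0$ with $\tilde q=0$, is real, and the paper fills it with an ingredient entirely absent from your proposal: it does \emph{not} attempt to lift finiteness from $\pi_1(Z)$ to $\pi_1(Z_{\rm reg})$, but rather attacks $\pi_1^{\rm reg}$ directly through orbifold $L^2$-index theory. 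After an index-one cover and a terminalization one has terminal $X$ with $K_X=0$; equipping a resolution with the orbifold structure $(Y,\Delta)$ so that $\pi_1^{\rm orb}(Y,\Delta)=\pi_1^{\rm reg}(X)$, Reid's Riemann--Roch for terminal threefolds~\cite{Rei87,TX17} gives
\[
\chi(\mathcal Y,\mathcal O_{\mathcal Y})=\tfrac{-c_2(X)\cdot K_X}{24}+\textstyle\sum_i(\text{basket terms})\ \ge\ 0,
\]
the first term vanishing since $K_X=0$. If all basket terms vanish then in fact $\pi_1^{\rm reg}(X)=\pi_1(X)$, finite by~\cite{Kol95}; otherwise $\chi_{(2)}=\chi>0$, and Campana's argument~\cite{Cam95} combined with $\kappa^+(\mathcal Y)=0$ (as in~\cite{GKP16b,Cam21}) forces $\pi_1^{\rm orb}(\mathcal Y)$ to be finite. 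This is the genuinely three-dimensional input you anticipated, but it comes from Reid's formula and $L^2$-methods, not from MMP/abundance or from bootstrapping $\pi_1(Z)$.

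A secondary error: your claim that $\tilde q(Z)=0$ excludes Euclidean orbifold structures on a low-dimensional $Z$ is incorrect. The elliptic orbifold cover of $(\mathbb P^1,D_Z)$ is ramified along $\operatorname{supp}(D_Z)$ and hence is not a quasi-\'etale cover of the variety $Z$; it contributes nothing to $\tilde q(Z)$. The same objection applies to your rank-$2$ bound in the $\dim Z=2$ case.
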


\subsection*{Acknowledgements}

The authors would like to thank Joaquín Moraga for suggesting the problem and useful discussions,
as well as Stefano Filipazzi, Stefan Kebekus, and Mirko Mauri for very useful comments and references.
FF thanks UCLA for its hospitality during his visit.

\section{Preliminaries}

In this section, we provide the necessary background and prove some preliminary results for the rest of the article.

\subsection{Virtual abelianity and solvability of groups}

\begin{definition}
\label{def:virtually-abelian}
{\em We say that a group $G$ is \textit{virtually abelian of rank $n$} if it has a subgroup of finite index which is abelian of rank $n$. }
\end{definition}

\begin{remark}
{ \em A group $G$ being virtually abelian of rank $n$ is equivalent to having a \textit{normal} subgroup of finite index which is abelian of rank $n$, since one can take the normal core of the abelian subgroup from Definition~\ref{def:virtually-abelian}, which is again abelian of the same rank and has finite index in $G$.}
\end{remark}

\begin{lemma}
\label{lem-virtual-groups}
Let $F, G$ and $H$ be groups fitting in an exact sequence 
$$F \xrightarrow{\phi} G \xrightarrow{\psi} H\rightarrow 1.$$ 
If $H$ is virtually abelian of rank $n$ and $F$ is finite, then $G$ is virtually abelian of rank at most $n$.
\end{lemma}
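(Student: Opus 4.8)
The plan is to reduce the statement, in two routine steps, to the situation of a finitely generated group that is a \emph{central} extension of a finitely generated abelian group by a finite group, and then to extract a finite‑index abelian subgroup as the center. First I would observe that the image of $\phi$, which equals $\ker\psi$, is a homomorphic image of the finite group $F$, hence finite; write $K:=\ker\psi$. Then $\psi$ presents $G$ as a finite extension $1\to K\to G\to H\to 1$. Using the Remark preceding the lemma, choose a finite‑index normal abelian subgroup $A\trianglelefteq H$ of rank $n$; as an abelian group of rank $n$ it is finitely generated, $A\cong\zz^{n}\oplus(\text{finite})$. Setting $N:=\psi^{-1}(A)$, we get a subgroup of finite index $[H:A]$ in $G$ fitting in $1\to K\to N\to A\to 1$ with $K$ finite, and $N$ is finitely generated (an extension of a finitely generated group by a finite one). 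It now suffices to produce a finite‑index abelian subgroup of $N$ of rank at most $n$.

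Second, to make the extension central, let $N$ act on the finite group $K$ by conjugation, giving $N\to\operatorname{Aut}(K)$ with finite image; let $C$ be its kernel. Then $C$ has finite index in $N$, hence in $G$; it is finitely generated; $K$ lies in the center of $C$; and $C/K$ is a finite‑index subgroup of $A$, so again $C/K\cong\zz^{n}\oplus(\text{finite})$. Since $C/K$ is abelian, $[C,C]\le K$, so $[C,C]$ is finite and central in $C$ (in particular $C$ is nilpotent of class $\le 2$).

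The key step is then to show that $Z(C)$ has finite index in $C$. Since $K\le Z(C)$, the quotient $C/Z(C)$ is a quotient of the finitely generated abelian group $C/K$, hence is itself finitely generated abelian. Because $[C,C]$ is central, commutation descends to a well‑defined alternating $\zz$‑bilinear pairing
$$\omega\colon C/Z(C)\times C/Z(C)\longrightarrow [C,C],\qquad (\bar x,\bar y)\mapsto [x,y],$$
which is non‑degenerate by the very definition of the center. Letting $e$ be the exponent of the finite group $[C,C]$, we get $\omega(e\bar x,\bar y)=\omega(\bar x,\bar y)^{e}=1$ for all $\bar y$, so $e\bar x=0$; thus $C/Z(C)$ is a finitely generated abelian group killed by $e$, hence finite. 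Therefore $Z(C)$ has finite index in $C$, and a fortiori in $G$. Finally $Z(C)$ is abelian, and since $Z(C)\cap K$ is finite while the image of $Z(C)$ in $C/K$ is a finite‑index subgroup of $\zz^{n}\oplus(\text{finite})$, the group $Z(C)$ is abelian of rank $n$, in particular at most $n$. This is the desired finite‑index abelian subgroup of $G$.

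I expect the main obstacle to be precisely this last step: a priori $C/Z(C)$ could be infinite, and what rescues the argument is the combination of non‑degeneracy of the commutator pairing (which forces $C/Z(C)$ to have bounded exponent) with finite generation (which then forces it to be finite). The two preliminary reductions — finiteness of $K$, and passage to the kernel of the conjugation action of $N$ on $K$ — are entirely routine, as is the bookkeeping of ranks at the end.
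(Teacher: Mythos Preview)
Your argument is correct, modulo one slip: the centralizer $C=Z_N(K)$ contains $K$ only when $K$ is abelian; in general replace $K$ by $K':=K\cap C$, which is genuinely central in $C$ with $C/K'\cong\psi(C)$ a finite-index subgroup of $A$, and the remainder goes through unchanged.

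The route, however, differs from the paper's. The paper chooses lifts $g_i\in G$ of generators $a_i$ of $A$, notes that conjugation by each $g_i$ permutes the finite set $F=\ker\psi$ so that some uniform power $g_i^{m}$ centralizes $F$, and then checks by an explicit iterated computation that the further powers $g_i^{|F|m}$ commute pairwise; the subgroup $\langle g_i^{|F|m}\rangle_i$ is the desired finite-index abelian subgroup. You instead pass to a finitely generated nilpotent group $C$ of class $\le 2$ with finite derived subgroup and extract $Z(C)$ via the nondegeneracy of the commutator pairing --- essentially the finitely-generated converse of Schur's theorem. The paper's approach is more elementary and hands-on, producing explicit generators; yours is cleaner structurally and isolates the single group-theoretic fact (finite $[C,C]$ plus finite generation forces $C/Z(C)$ finite) that is doing all the work.
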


\begin{proof}
By replacing $F$ with its image $\phi(F)$ we can assume that $F$ is a subgroup of $G$ with $f$ elements.
Since $H$ is virtually abelian, there exists an abelian group $A \leqslant H$ of rank $n$ and of finite index in $H$. Let $A$ be generated by $\{a_i\}_{\{1 \leq i\leq n\}}$. Let $g_i$ be elements in $G$ mapping to $a_i$ via $\psi$. For any $x$ in $F$ and $g_i$, we have that $g_i x g_i^{-1}$ gets mapped to the identity $e\in H$ via $\psi$, hence $g_i x g_i^{-1} \in F$. Thus conjugation by elements  $y \in \langle g_i \rangle$ gives a group action  on $F$. Since $F$ is a finite set, there exists a uniform constant $m$ such that $m$-iterations of all of these actions are the identity, in particular $y^{m} x y^{-m} =x$ for any $x \in F$ and $y \in \langle g_i \rangle$. 

On the other hand, for any $y \in \langle g_i \rangle$, there exists $x\in F$, such that $g_i^{m} y g_i^{-m}=x y$, since it maps to the same element of $A$ as $y$. Therefore 
$$
g_i^{fm}g_j^{fm}g_i^{-fm}=g_i^{(f-1)m}xg_j^{fm}g_i^{-(f-1)m}=xg_i^{(f-1)m}g_j^{fm}g_i^{-(f-1)m}= \ldots=x^{f}g_j^{fm}=g_j^{fm}.
$$
We conclude that $\langle  g_i^{fm}\rangle_i$ is an abelian subgroup of rank $n$ of $G$. This subgroup has finite index in $G$, as it surjects onto $\langle a_i^{fm}\rangle_i$, a finite index subgroup of $H$, via a homomorphism with finite kernel.

More explicitly, let $h_j\langle g_i^{mf}\rangle_i$ be the left cosets in $H$ and for every $j$, $h'_j$ a preimage of $h_j$ in $G$. Then the left cosets of $\langle g_i^{mf}\rangle_i$ in $G$ are of the form $xh'_j\langle  g_i^{mf} \rangle_i$, where $x \in F$, hence there are only finitely many cosets.

\end{proof}

\begin{definition}
    {\em We say that a group $G$ is \textit{virtually solvable of length $n$ and index $i$} if it has a subgroup of index $i$ that is virtually solvable of length $n$.
    When we omit the index $i$, the solvable subgroup only needs to be of finite index.}
\end{definition}

The following two lemmata are counterparts of Lemma~\ref{lem-virtual-groups} for \emph{virtual solvability}, which we need for effective results, cf. Section~\ref{sec:eff-virt-solv}.

\begin{lemma}\label{lem-fin-sol}
    Let $F, G$ and $H$ be groups fitting in an exact sequence 
$$F \xrightarrow{\phi} G \xrightarrow{\psi} H\rightarrow 1.$$ 
If $H$ is virtually solvable of length $n$ and index $i$ and $F$ is finite, then $G$ is virtually solvable of length at most $n+1$ and index at most $i(|F|-1)!$.
\end{lemma}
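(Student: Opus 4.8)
The plan is to follow the same strategy as in the proof of Lemma~\ref{lem-virtual-groups}, but to extract the derived length and the effective index from a cleaner mechanism: the conjugation action of a suitable finite-index subgroup of $G$ on the finite normal subgroup $\phi(F)$, whose "size" is governed by the symmetric group on $|F|-1$ letters.

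First I would reduce to the situation where $F$ is literally a finite normal subgroup of $G$: replacing $F$ by $N:=\phi(F)=\ker\psi$, we get a normal subgroup $N\trianglelefteq G$ with $|N|\le|F|$ and an isomorphism $G/N\cong H$. By hypothesis $H$ contains a subgroup $\bar H_0$ of index $i$ that is solvable of derived length at most $n$; let $G_0\le G$ be its preimage under the quotient map $G\to G/N\cong H$. Then $[G:G_0]=i$, $N\trianglelefteq G_0$, and $G_0/N\cong\bar H_0$.

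Next, consider the action of $G_0$ on the finite set $N$ by conjugation. It fixes the identity element, so it gives a homomorphism $\rho\colon G_0\to\operatorname{Sym}(N\setminus\{e\})$, whose image therefore has order at most $(|N|-1)!\le(|F|-1)!$. Let $K:=\ker\rho=C_{G_0}(N)$. Then $[G_0:K]\le(|F|-1)!$, hence $[G:K]\le i\,(|F|-1)!$, which is the claimed index bound. (When $F=\{e\}$ this degenerates correctly, since $(|F|-1)!=1$ and $K=G_0$.)

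It remains to bound the derived length of $K$ by $n+1$, which is the one genuinely conceptual step. By construction $K\cap N=Z(N)$ is abelian, and by the second isomorphism theorem $K/(K\cap N)\cong KN/N$ is a subgroup of $G_0/N\cong\bar H_0$, hence solvable of length at most $n$. Therefore $K^{(n)}\le K\cap N$ and $K^{(n+1)}\le(K\cap N)'=\{e\}$, so $K$ is solvable of length at most $n+1$. I expect the only points requiring care to be (i) using the fixed point $e$ of the conjugation action so that the bound is $(|N|-1)!$ rather than $|N|!$ or $|\operatorname{Aut}(N)|$, and (ii) the bookkeeping of derived lengths across the extension $1\to Z(N)\to K\to KN/N\to 1$; everything else is routine.
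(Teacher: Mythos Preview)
Your proof is correct and follows essentially the same route as the paper's: pull back the solvable subgroup of $H$ to a subgroup $G_0\le G$ of index $i$, take the centralizer $K=C_{G_0}(N)$ of $N=\phi(F)$, bound $[G_0:K]$ via the conjugation action, and bound the derived length of $K$ via the extension $1\to Z(N)\to K\to KN/N\to 1$. The only cosmetic difference is that the paper phrases the index bound as $[G_0:K]\le|\operatorname{Aut}(N)|\le(|F|-1)!$ rather than going directly through the permutation action on $N\setminus\{e\}$.
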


\begin{proof}
Let $H'$ be the solvable subgroup of $H$ of length $n$ and index $i$. By letting $G'$ be the preimage of $H'$ under $\psi$, we obtain the exact sequence
$$F \xrightarrow{\phi} G' \xrightarrow{\psi} H'\rightarrow 1,$$ 
where $[G:G']=[H:H']=i$.

By taking a quotient if needed, we may assume that $F$ is a subgroup of $G'$.
Let $Z$ be the centralizer of $F$ in $G'$. The index of $Z$ in $G'$ is at most $|{\rm Aut}(F)|$, which is at most $(|F|-1)!$.

Since $F\cap Z$ is abelian and $\psi(Z) \leqslant H'$ is solvable of length $n$, the group $Z$ is solvable of length at most $n+1$. Hence $G$ is virtually solvable of length at most $n+1$ and index at most $i|{\rm Aut}(F)|\leq i(|F|-1)!$.

\end{proof}

\begin{lemma}\label{lem-virt-sol}
        Let $F, G$ and $H$ be groups fitting in an exact sequence 
$$F \xrightarrow{\phi} G \xrightarrow{\psi} H\rightarrow 1.$$ 

If $F$ is virtually solvable of length $n_1$ and index $r_1$, and $H$ is virtually solvable of length $n_2$ and index $r_2$, then $G$ is virtually solvable of length at most $n_1+n_2+1$ and index $r_2(r_1-1)!$
\end{lemma}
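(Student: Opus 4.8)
\smallskip
\noindent\textbf{Proof proposal.}
The plan is to reduce the statement, in two steps, to an extension with a \emph{finite} kernel --- exactly the situation treated by Lemma~\ref{lem-fin-sol} --- and then to feed that lemma, keeping track of the resulting length and index.

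\emph{First I would normalize the sequence.} Replacing $F$ by its image $\phi(F)\trianglelefteq G$, which is a quotient of $F$ and hence still virtually solvable of length at most $n_1$ and index at most $r_1$, I may assume $F\trianglelefteq G$ and $G/F\cong H$. Then, choosing a solvable subgroup $H_1\leqslant H$ of length at most $n_2$ and index at most $r_2$ and setting $G_1:=\psi^{-1}(H_1)$, I obtain $[G:G_1]=r_2$ and a short exact sequence $1\to F\to G_1\to H_1\to 1$ in which $H_1$ is \emph{genuinely} solvable of length at most $n_2$. It then suffices to exhibit, inside $G_1$, a solvable subgroup of length at most $n_1+n_2+1$ and index at most $(r_1-1)!$.

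\emph{Then I would isolate a normal solvable ``kernel'' and invoke Lemma~\ref{lem-fin-sol}.} Since $G_1/F\cong H_1$ has trivial $n_2$-th derived subgroup, the characteristic subgroup $L:=G_1^{(n_2)}$ is contained in $F$; thus $L\trianglelefteq G_1$, and, being a subgroup of $F$, the group $L$ is virtually solvable with $L\cap F_1$ solvable of length at most $n_1$ and of index at most $r_1$ in $L$. I would replace $L\cap F_1$ by a subgroup $N\leqslant L$ that is normal in $G_1$ --- for instance the normal core in $G_1$ of $L\cap F_1$ --- chosen so that $N$ is solvable of length at most $n_1$ (it sits inside $F_1$) and $L/N$ is finite with $|L/N|$ controlled in terms of $r_1$, using that $L/N$ acts faithfully and transitively on the at most $r_1$ cosets of $L\cap F_1$ in $L$. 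Then $G_1/N$ fits into $1\to L/N\to G_1/N\to G_1/L\to 1$, a finite-by-solvable extension since $G_1/L=G_1/G_1^{(n_2)}$ is solvable of length at most $n_2$. Lemma~\ref{lem-fin-sol} applied to this sequence yields a solvable subgroup of $G_1/N$ of length at most $n_2+1$ and index at most $(|L/N|-1)!\leqslant(r_1-1)!$; its preimage $K\leqslant G_1$ then has index at most $(r_1-1)!$ in $G_1$ and is solvable of length at most $n_1+n_2+1$, because $N\leqslant K$ is solvable of length at most $n_1$ while $K/N$ is solvable of length at most $n_2+1$. Finally $[G:K]=r_2\,[G_1:K]\leqslant r_2(r_1-1)!$, which is what we want.

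\emph{The hard part} is the middle step. The solvable subgroup $F_1$, hence also $L\cap F_1$, need not be normal in $F$, let alone in $G_1$, so one cannot simply pass to the quotient by it; one must manufacture a canonical solvable replacement $N$ that is simultaneously normal in $G_1$, of derived length at most $n_1$, and has small finite quotient $L/N$ --- small enough to yield the sharp constant $(r_1-1)!$ rather than a cruder bound. The purely qualitative statement that $G$ is virtually solvable needs none of this (virtually solvable groups are closed under extensions); all the work is in this length-and-index bookkeeping, for which Lemma~\ref{lem-fin-sol} carries the weight once a finite kernel has been exposed.
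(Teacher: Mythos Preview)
Your overall strategy---pull back to a subgroup $G_1$ with solvable image, produce a solvable normal subgroup with finite quotient, and feed the resulting finite-by-solvable extension into Lemma~\ref{lem-fin-sol}---is exactly the paper's. The difference is in how the normal solvable kernel is manufactured, and here your proposal has a genuine gap.

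You take $N$ to be the normal core \emph{in $G_1$} of $L\cap F_1$ and then assert that $|L/N|\leq r_1$ because ``$L/N$ acts faithfully and transitively on the at most $r_1$ cosets of $L\cap F_1$ in $L$.'' That argument bounds the core of $L\cap F_1$ \emph{in $L$}, not in $G_1$. Conjugation by elements of $G_1$ moves $L\cap F_1$ among index-$\leq r_1$ subgroups of $L$, but there is no a priori bound on how many distinct conjugates arise (and $L$ need not be finitely generated), so the $G_1$-core can have index in $L$ far larger than $r_1$---indeed it can be infinite. Thus neither the finiteness of $L/N$ nor the bound $(r_1-1)!$ is established. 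You correctly flag this as ``the hard part,'' but the suggested construction does not resolve it.

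The paper avoids the detour through $L=G_1^{(n_2)}$ entirely and works with $F$ itself. The key observation is that the solvable normal subgroup $F'\trianglelefteq F$ of \emph{smallest index} is automatically characteristic in $F$: if some automorphism moved it to a different $F''$, then $F'F''$ would be a solvable normal subgroup of strictly smaller index. Since $F\trianglelefteq G_1$, characteristic subgroups of $F$ are normal in $G_1$, so one may pass directly to $F/F'\to G_1/F'\to H_1\to 1$ with $|F/F'|\leq r_1$ and apply Lemma~\ref{lem-fin-sol}. This one-line ``smallest index $\Rightarrow$ characteristic'' trick is precisely the missing idea that makes the index bookkeeping go through.
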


\begin{proof}
Let $H'$ be the solvable group of length $n_2$ and index $r_2$ of $H$. By letting $G'$ be the preimage of $H'$ under $\psi$, we obtain the exact sequence:
$$F \xrightarrow{\phi} G' \xrightarrow{\psi} H'\rightarrow 1,$$ 
where $[G:G']=[H:H']=i$.

By taking a quotient if needed, we may assume that $F$ is a subgroup of $G'$.
Let $F'$ be the solvable normal subgroup of $F$ of smallest index. $F'$ is a characteristic subgroup of $F$, otherwise there would be $F''$, another normal subgroup of the same index. But then the group $F'F''$ would be a normal solvable subgroup of smaller index, a contradiction. Therefore $F'$ is a normal subgroup of $G'$.

By Lemma~\ref{lem-fin-sol} applied to the induced exact sequence
$$
F/F' \xrightarrow{\phi} G'/F' \xrightarrow{\psi} H'\rightarrow 1,
$$
the group $G'/F'$ is virtually solvable of length $n_2+1$ and index at most $(r_1-1)!$.
Let the quotient be $\pi:G'\rightarrow G'/F'$. 
Thus there exists a subgroup $H'' \leqslant G'/F'$ solvable  of length $n_2+1$ and index at most $(r_1-1)!$. By taking the preimage $G'':=\pi^{-1}(H'')$, we obtain:

$$F'\rightarrow G'' \xrightarrow{\pi} H'' \rightarrow 1.$$
Therefore $G''$ is solvable of length $n_1+n_2+1$, and its index in $G$ is 
$$[G:G'']=[G:G'][G':G'']\leq r_2(r_1-1)!$$
as stated.    
\end{proof}

\subsection{Generalized pairs and singularities of the Minimal Model Program}

In this subsection we recall the definitions of generalized pairs and the singularities of the Minimal Model Program. For the definition and properties of b-divisors, we refer the reader to \cite[\S 2.3.2]{Corti}.

\begin{definition}
{\em A  \textit{generalized pair} $(X,D,{\bf M})$ is a normal variety $X$, together with an effective $\mathbb{Q}$-divisor $D$ and a nef b-divisor ${\bf M}$, such that $K_X+D+{\bf M}_X$ is $\mathbb{Q}$-Cartier.}
\end{definition}

\begin{definition}
{\em Let $(X, D, {\bf M})$ be a generalized pair and $Y$ a normal variety.
Given a projective birational morphism $f:Y \rightarrow X$, we can define $D_Y$, by the formulas:

$$K_Y+D_Y+{\bf M}_Y=f^*(K_X+D+{\bf M}_X) \text{\ \ \ and \ \ } f_*(D_Y)=D.$$

For a prime divisor $E \subset Y$, we define the \textit{generalized log discrepancy} of $(X,D, {\bf M})$ at $E$ as:

$$a_{E}(X,D, {\bf M})=1-\text{coeff}_E(D_Y).$$

A generalized pair $(X,D, {\bf M})$ is said to be \textit{generalized log canonical} (respectively \textit{generalized kawamata log terminal}), abbreviated glc (respectively gklt) if for every projective birational morphism $Y\rightarrow X$ and every prime divisor $E\subset Y$, the log discrepancy $a_E(X,D,{\bf M})\geq 0$ (respectively $a_E(X,D,{\bf M}) >0$).}
\end{definition}

\begin{definition}
{\em A \textit{generalized log canonical place} (respectively \textit{generalized non-klt place}) of $(X,D,{\bf M})$ is a prime divisor $E$ over $X$ such that $a_E(X,D, {\bf M})=0$ (respectively $a_E(X,D, {\bf M})\leq 0$).

A \textit{generalized log canonical center} (glcc for short) of $(X,D, {\bf M})$ is the image in $X$ of a generalized log canonical place. It is convenient to regard $X$ itself as a glcc, cf.~\cite[Warning~p.~163]{K13} to exclude trivial exceptions to certain statements, see, e.g., Lemma~\ref{lem-image-lcc}.

A generalized pair $(X,D, {\bf M})$ is \textit{generalized divisorially log terminal} (gdlt for short) if there exists an open subset $U \subseteq X$ such that:

\begin{enumerate}
    \item $\text{coeff}(D)\leq 1$,
    \item $U$ is smooth and $D\big|_U$ is simple normal crossing, and
    \item all the generalized non-klt centers of $(X,D, {\bf M})$ intersect $U$ and are given by strata of $\lfloor D \rfloor$.
\end{enumerate}
}
\end{definition}

\begin{definition}
{\em
    Let $(X,D, {\bf M})$ be a generalized log canonical pair and $f:Y\rightarrow X$ a birational morphism, where $K_Y+D_Y+{\bf M}_Y=f^*(K_X+D+{\bf M}_X).$
    We say that $(Y,D_Y,{\bf M})$ is a generalized dlt modification of $(X,D,{\bf M})$ if $(Y,D_Y,{\bf M})$ is generalized dlt and every $f$-exceptional divisor appears in $D_Y$ with coefficient $1$.
}    
\end{definition}

The following lemma is \cite[Theorem 2.9]{FS23}.
\begin{lemma}\label{lem-gdlt-mod}
    Every generalized log canonical pair admits a $\mathbb{Q}$-factorial generalized dlt modification.
\end{lemma}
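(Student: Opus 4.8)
The plan is to carry out, in the generalised category, the standard construction of a $\mathbb{Q}$-factorial dlt modification by ``log resolution followed by a relative MMP'', which is also how \cite[Theorem~2.9]{FS23} proceeds. First I would choose a log resolution $g\colon W\to X$ of $(X,D)$ to which the nef b-divisor ${\bf M}$ descends; this exists because ${\bf M}$ descends to some birational model, and we may take a log resolution of $(X,D)$ dominating it. Set $\Gamma_W:=g_*^{-1}D+\operatorname{Exc}(g)$, where the whole exceptional locus is taken with coefficient $1$. Since $W$ is smooth, $\Gamma_W$ has simple normal crossing support with all coefficients in $[0,1]$, and ${\bf M}$ descends to $W$, so $(W,\Gamma_W,{\bf M}_W)$ is a $\mathbb{Q}$-factorial gdlt pair (one takes $U=W$ in the definition, using that a log smooth generalised pair to which ${\bf M}$ descends is gdlt). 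Writing
$$K_W+\Gamma_W+{\bf M}_W=g^*(K_X+D+{\bf M}_X)+E_W,\qquad E_W:=\sum_{E_i\text{ $g$-exceptional}}a_{E_i}(X,D,{\bf M})\,E_i,$$
the divisor $E_W$ is $g$-exceptional, and it is effective precisely because $(X,D,{\bf M})$ is glc; by construction, $\operatorname{Supp}(E_W)$ is exactly the set of $g$-exceptional prime divisors that are \emph{not} generalised lc places of $(X,D,{\bf M})$.

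Next I would run the $(K_W+\Gamma_W+{\bf M}_W)$-MMP over $X$. As $K_W+\Gamma_W+{\bf M}_W\sim_{\mathbb{Q},X}E_W\ge 0$, this divisor is pseudo-effective over $X$, and running the MMP with scaling of a $g$-ample divisor produces, after finitely many flips and divisorial contractions, a model $f\colon Y\to X$ on which $K_Y+\Gamma_Y+{\bf M}_Y$ is $f$-nef (here $\Gamma_Y$ and ${\bf M}_Y$ denote the push-forward and the trace). Each step of this MMP contracts only curves $C$ with $E_W\cdot C<0$, hence curves contained in $\operatorname{Supp}(E_W)$; in particular it is an isomorphism at the generic point of every $g$-exceptional divisor which is an lc place, and the divisors it contracts are all $g$-exceptional and not lc places.

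Finally, on $Y$ one still has $K_Y+\Gamma_Y+{\bf M}_Y=f^*(K_X+D+{\bf M}_X)+E_Y$, with $E_Y$ the push-forward of $E_W$, which is effective and $f$-exceptional. Since $K_Y+\Gamma_Y+{\bf M}_Y$ is $f$-nef, so is $E_Y$, and the negativity lemma then forces $E_Y=0$; thus every component of $E_W$, i.e. every $g$-exceptional non-lc-place divisor, has been contracted. Hence $K_Y+\Gamma_Y+{\bf M}_Y=f^*(K_X+D+{\bf M}_X)$, every $f$-exceptional prime divisor on $Y$ is a generalised lc place of $(X,D,{\bf M})$ occurring in $\Gamma_Y$ with coefficient $1$, and $(Y,\Gamma_Y,{\bf M}_Y)$ is $\mathbb{Q}$-factorial gdlt (these properties being preserved by each MMP step). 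This is the desired generalised dlt modification.

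The only genuinely nontrivial input is the second step: it requires the cone and contraction theorems, the existence of flips for $\mathbb{Q}$-factorial gdlt generalised pairs, and termination of this particular relative MMP. The first two are by now standard parts of the MMP for generalised pairs; termination is the subtle point, but in the situation at hand the MMP only has to contract the effective $g$-exceptional divisor $E_W$ while touching no lc place, so it terminates by the relevant termination results for the MMP with scaling in the generalised setting, or by reducing to a situation where termination is classical. Everything else is the classical dlt-modification argument transplanted verbatim to generalised pairs.
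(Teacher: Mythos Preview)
The paper does not supply its own proof of this lemma: it simply records the statement and cites \cite[Theorem~2.9]{FS23}. Your sketch is the standard ``log resolution $+$ relative MMP $+$ negativity lemma'' argument, which is exactly how the cited reference (and essentially every source) produces dlt modifications, so in that sense you have reconstructed the proof the paper is pointing to rather than offered an alternative. The outline is correct; the one point you rightly flag as delicate---termination of the $(K_W+\Gamma_W+{\bf M}_W)$-MMP with scaling over $X$---is precisely what the generalised-pair MMP literature (Birkar--Zhang, Han--Li, and ultimately \cite{FS23}) supplies, and is the real content behind the citation.
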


\begin{definition}
{ \em
We say that a generalized pair $(X,D,{\bf M})$ is \textit{generalized log Fano} if:
\begin{enumerate}
    \item $-(K_X+D+{\bf M}_X)$ is ample, and
    \item $(X,D,{\bf M})$ is generalized klt.
\end{enumerate}

We say that a generalized pair $(X,D,{\bf M})$ is \textit{generalized log Calabi-Yau} if:
\begin{enumerate}
    \item $K_X+D+{\bf M}_X \sim_{\mathbb{Q}} 0$, and
    \item $(X,D,{\bf M})$ is generalized log canonical.
\end{enumerate}

A generalized pair $(X,D,{\bf M})$ is of \textit{generalized Fano type} (respectively \textit{generalized Calabi-Yau type}) if for some choice of $B\geq D$ and some nef b-divisor ${\bf N}$, the generalized pair $(X,B,{\bf M+N})$ is generalized log Fano (respectively generalized log Calabi-Yau).
}
\end{definition}

\begin{definition}
{ \em
    A \textit{contraction} is a projective morphism of quasi-projective varieties $f:X \rightarrow Z$, such that $f_*\mathcal{O}_X=\mathcal{O}_Z$. A \textit{fibration} is a contraction $f:X\rightarrow Z$ such that $\dim X > \dim Z$.
}
\end{definition}

The following is the generalized canonical bundle formula as in \cite{Fil20} and \cite[Theorem 1.2]{HL21}
\begin{proposition}\label{lem-canonical-bundle-formula}
    Let $f:X\rightarrow Z$ be a surjective projective morphism of normal varieties. Let $(X,D, {\bf M})$ be a generalized log canonical pair, such that 
    $$K_X+D+{\bf M}_X \sim_{\mathbb{Q}, f} 0.$$

    Then there exists a generalized log canonical pair $(Z,D_Z, {\bf N})$, such that:

    $$K_X+D+{\bf N}_X \sim_{\mathbb{Q}} f^*(K_Z+D_Z+{\bf N}_Z).$$
\end{proposition}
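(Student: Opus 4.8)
The statement is the \emph{generalized canonical bundle formula}, established in \cite{Fil20} and \cite[Theorem~1.2]{HL21}; the plan is to recall its construction and to flag the one genuinely hard input. First I would reduce to the case that $f$ is a contraction: taking a Stein factorization $X\xrightarrow{g}W\xrightarrow{h}Z$ with $g$ a contraction and $h$ finite, one has $K_X+D+{\bf M}_X\sim_{\mathbb{Q},g}0$ as well, and the residual finite morphism $h$ is handled by the theory of the different for finite covers, which produces an effective divisor $D_Z$ on $Z$ such that $(Z,D_Z,{\bf N})$ is glc whenever the pair upstairs is glc, log discrepancies being compatible with finite pullbacks. So from now on I assume $f$ is a contraction.

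For a contraction I would build the \emph{discriminant} and \emph{moduli} b-divisors in the classical way. For a prime divisor $P$ over $Z$, living on a birational model $\pi\colon Z'\to Z$ with induced contraction $f'\colon X'\to Z'$ and induced boundary and moduli data $(D_{X'},{\bf M})$, let $t_P$ be the largest $t\in\mathbb{R}$ for which $(X',\,D_{X'}+t\,(f')^{*}P,\,{\bf M})$ is generalized log canonical over the generic point of $P$, and declare the coefficient of $P$ in the discriminant b-divisor ${\bf B}$ to be $1-t_P$. Since $(X,D,{\bf M})$ is glc one gets $t_P\ge 0$, and since $f'$ is surjective the divisor $(f')^{*}P$ is nonzero with every multiplicity $\ge 1$, which forces $t_P\le 1$; hence ${\bf B}$ is an effective b-divisor with coefficients in $[0,1]$. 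One then defines the moduli b-divisor ${\bf N}$ by requiring, on every high enough model,
$$
K_{X'}+D_{X'}+{\bf M}_{X'}\ \sim_{\mathbb{Q}}\ (f')^{*}\bigl(K_{Z'}+{\bf B}_{Z'}+{\bf N}_{Z'}\bigr),
$$
which pins ${\bf N}$ down up to $\mathbb{Q}$-linear equivalence.

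The crux, and the step I expect to be the main obstacle, is to prove that ${\bf N}$ is \emph{nef as a b-divisor}, i.e.\ that ${\bf N}_{Z'}$ is nef for some (hence every higher) model $Z'$ and that ${\bf N}$ descends from there. This is the generalized analogue of Ambro's theorem on the nefness of the moduli part; its proof ultimately rests on Hodge-theoretic positivity (variation of the log Hodge structures carried by the general fibres of $f'$), adapted to the generalized setting, and is precisely the content supplied by \cite{Fil20} and \cite[Theorem~1.2]{HL21}, which I would simply invoke. Granting it, $(Z,{\bf B}_Z,{\bf N})$ is a generalized pair, and it is generalized log canonical because, by construction,
$$
a_P(Z,{\bf B}_Z,{\bf N})=1-\coeff_P({\bf B})=t_P\ge 0
$$
for every prime divisor $P$ over $Z$. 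Taking $D_Z:={\bf B}_Z$ then gives the generalized log canonical pair with $K_X+D+{\bf N}_X\sim_{\mathbb{Q}}f^{*}(K_Z+D_Z+{\bf N}_Z)$, as required.
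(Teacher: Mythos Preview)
The paper does not give its own proof of this proposition: it is stated as the generalized canonical bundle formula and immediately attributed to \cite{Fil20} and \cite[Theorem~1.2]{HL21}, with no further argument. Your proposal is therefore not competing with a proof in the paper but rather sketching the content of the cited references, and as such it is essentially correct: the discriminant/moduli decomposition you describe is the standard construction, the bound $t_P\in[0,1]$ yields the glc property on the base, and you rightly identify the b-nefness of the moduli part as the substantive step, deferring it to the same sources the paper cites. One small slip: in your final line you reproduce the paper's typo $K_X+D+{\bf N}_X$; the left-hand side should carry ${\bf M}_X$, as in your own displayed formula just above.
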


\begin{remark}
    {\em The pair $(Z,D_Z,{\bf N})$ obtained in Proposition~\ref{lem-canonical-bundle-formula} will be called \textit{the generalized pair induced by the canonical bundle formula}.}
\end{remark}

\begin{lemma}\label{lem-image-lcc}
    Let $f: X \rightarrow Z$ be a projective fibration. Let $(X,D, {\bf M})$ be a generalized projective log canonical pair, and $f^*(K_Z+D_Z+{\bf N}_Z) \sim_{\mathbb{Q}} K_X+D+{\bf M}_X.$ Then the image of a generalized log canonical center of $(X,D, {\bf M})$ is a generalized log canonical center of $(Z, D_Z, {\bf N})$.
\end{lemma}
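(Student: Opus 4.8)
The plan is to pass from the generalized log canonical center $W \subseteq X$ to a generalized lc place over it, take a suitable birational model dominating both $X$ and $Z$, and track the discrepancy through the pullback formula. First I would pick a prime divisor $E$ over $X$ with $a_E(X,D,{\bf M}) = 0$ whose center on $X$ is $W$. Let $g\colon Y \to X$ be a projective birational morphism from a normal variety on which $E$ appears as a divisor, chosen (after further blow-up and using Lemma~\ref{lem-gdlt-mod} or resolution) so that the rational map $f \circ g\colon Y \dashrightarrow Z$ is in fact a morphism $h\colon Y \to Z$; denote by $V := h(E) \subseteq Z$ the center of $E$ on $Z$, which by construction equals $\overline{f(W)}$, the image of $W$. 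Writing $K_Y + D_Y + {\bf M}_Y = g^*(K_X+D+{\bf M}_X)$ and using the hypothesis $f^*(K_Z+D_Z+{\bf N}_Z) \sim_{\mathbb{Q}} K_X+D+{\bf M}_X$, we get $K_Y + D_Y + {\bf M}_Y \sim_{\mathbb{Q}} h^*(K_Z+D_Z+{\bf N}_Z)$, so that $\operatorname{coeff}_E(D_Y)$ simultaneously computes $a_E(X,D,{\bf M})$ and $a_E(Z,D_Z,{\bf N})$ — hence $a_E(Z,D_Z,{\bf N}) = 0$ as well, and $E$ is a generalized lc place of $(Z,D_Z,{\bf N})$ with center $V = f(W)$. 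Thus $f(W)$ is a generalized lc center of $(Z,D_Z,{\bf N})$.

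The one case that needs separate attention is the ``trivial'' glcc: if $W = X$ itself (recall $X$ is regarded as a glcc), then its image is all of $Z$, which by the convention recalled after the definition of glcc is a glcc of $(Z,D_Z,{\bf N})$, so there is nothing to prove. So I may assume $W$ is a proper subvariety obtained from an honest exceptional (or boundary) divisor $E$, and the argument above applies verbatim.

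The main obstacle is purely the birational bookkeeping: one must ensure that a model $Y$ can be chosen on which $E$ is a divisor \emph{and} the map to $Z$ is a genuine morphism, and that the b-divisor $\bf M$ (resp.\ $\bf N$) is pulled back compatibly so that the two $\mathbb{Q}$-linear equivalences can be composed. This is handled by resolving the indeterminacy of $f\circ g$ — replacing $Y$ by a common resolution of $Y$ and the graph of $f\circ g$ — and invoking that for a nef b-divisor the traces on higher models satisfy ${\bf M}_{Y'} = (\text{pullback of }{\bf M}_Y)$, so the numbers $\operatorname{coeff}_E$ are unchanged under this further blow-up. Once the model is in place, the identification of discrepancies is immediate from the two displayed formulas, and the statement follows.
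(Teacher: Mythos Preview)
Your argument has a genuine gap at the key step. You write that
\[
K_Y + D_Y + {\bf M}_Y \sim_{\mathbb{Q}} h^*(K_Z+D_Z+{\bf N}_Z)
\]
and conclude that $\operatorname{coeff}_E(D_Y)$ ``simultaneously computes $a_E(X,D,{\bf M})$ and $a_E(Z,D_Z,{\bf N})$''. But the quantity $a_E(Z,D_Z,{\bf N})$ is not defined: generalized log discrepancies are attached to prime divisors appearing on \emph{birational} models of $Z$, and your map $h = f\circ g\colon Y \to Z$ is not birational, since $f$ is a fibration with $\dim X > \dim Z$. The divisor $E \subset Y$ does not give a divisorial valuation on the function field of $Z$, so there is no discrepancy to read off. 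The displayed $\mathbb{Q}$-linear equivalence is of course correct, but it is a crepant pullback along a fibration, not the log-pullback formula that defines discrepancies; the two coincide only for birational morphisms.

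What is actually needed is to produce a divisor $E'$ over $Z$ with center $f(W)$ and $a_{E'}(Z,D_Z,{\bf N})=0$, and nothing in your construction supplies such an $E'$. The paper sidesteps this entirely: it argues by contradiction, assuming $(Z,D_Z,{\bf N})$ is gklt at $f(C)$, then chooses a general ample effective divisor $A$ on $Z$ through $f(C)$ so that $(Z,D_Z+A,{\bf N})$ remains glc, and invokes the inversion-of-adjunction result \cite[Theorem~1.6]{Fil20} to deduce that $(X,D+f^*A,{\bf M})$ is glc --- which is impossible, since $f^*A$ passes through the already-lc center $C$. This avoids ever having to manufacture a specific lc place over $Z$.
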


\begin{proof}
Let $C$ be a glcc of $(X,D, {\bf M})$. If $f(C)=Z$, it is a glcc by convention. 
    Suppose that $f(C) \subset Z$ is not a glcc of $(Z, D_Z, {\bf N})$, i.e. $(Z, D_Z, {\bf N})$ is gklt at $f(C)$.
    Thus we can take $A$, an ample effective divisor in $Z$ containing $f(C)$, general enough such that $(Z,D_Z+A, {\bf N})$ is generalized log canonical. But $(X,D+f^*A, {\bf M})$ is not generalized log canonical at $C$, which contradicts \cite[Theorem 1.6]{Fil20} .
\end{proof}

The following lemma is taken from \cite[Lemma 2.36]{Mor21a}.
\begin{lemma}\label{lem-dlt-mod-Fano-fibration}
    Let $Z \rightarrow W$ be a Fano type fibration. Let $(Z,B, {\bf M})$  be a generalized log Calabi-Yau pair and $(W,B_W, {\bf N})$ be the generalized pair induced by the canonical bundle formula.
    Let $W'\rightarrow W$ be a $\mathbb{Q}$-factorial gdlt modification of $(W,B_W, {\bf N})$. Then there exists a birational contraction $\phi: Z' \dashrightarrow Z$, only extracting generalized log canonical places of $(Z,B, {\bf M})$, such that there exists a Fano type fibration $Z' \rightarrow W'$ making the following diagram commute:

$$\begin{tikzcd}
(Z,B, {\bf M})\ar{d} 
  & (Z',B',{\bf M}) \arrow[l,dashed]{}[swap]{\phi}\ar{d}
\\
(W,B_W, {\bf N})
  & (W',B_{W'}, {\bf N}) \arrow[l]{}
\end{tikzcd}$$
\end{lemma}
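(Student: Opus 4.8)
The plan is to reproduce the argument of \cite[Lemma 2.36]{Mor21a}: lift the given generalized dlt modification of the base to a birational modification of $Z$ extracting only generalized log canonical places of $(Z,B,{\bf M})$, and then run a relative Minimal Model Program to obtain a Fano type fibration over $W'$. First I would fix notation. Write $f: Z\to W$ and $h: W'\to W$, and let $G_1,\dots,G_r$ be the $h$-exceptional prime divisors. Since $(W',B_{W'},{\bf N})$ is a generalized dlt modification of $(W,B_W,{\bf N})$, each $G_j$ is a generalized log canonical place of $(W,B_W,{\bf N})$, it occurs in $B_{W'}$ with coefficient $1$, and $h^*(K_W+B_W+{\bf N}_W)=K_{W'}+B_{W'}+{\bf N}_{W'}$. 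Write $V_j\subseteq W$ for the center of $G_j$; these are generalized log canonical centers of $(W,B_W,{\bf N})$.

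The technical core is a lifting step for log canonical places through the canonical bundle formula. Using the generalized canonical bundle formula together with its compatibility with adjunction and log canonical places (\cite{Fil20,HL21}, in the spirit of Lemma~\ref{lem-image-lcc}), I would produce for each $j$ a generalized log canonical place $E_j$ of $(Z,B,{\bf M})$, vertical over $W$, whose center in $Z$ surjects onto $V_j$ via $f$, with the property that on any common birational model on which the $E_j$ are divisorial the generalized pair induced on $W$ by the canonical bundle formula has $G_j$ among its log canonical places with coefficient $1$. Granting this, since $f$ is a Fano type fibration, $Z$ is of Fano type over $W$, so by the Minimal Model Program (Birkar--Cascini--Hacon--McKernan) there is a $\mathbb{Q}$-factorial projective birational morphism $g: \hat Z\to Z$ extracting exactly $E_1,\dots,E_r$. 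Setting $K_{\hat Z}+\hat B+{\bf M}_{\hat Z}=g^*(K_Z+B+{\bf M}_Z)$, the triple $(\hat Z,\hat B,{\bf M})$ is again generalized log Calabi-Yau (the $E_j$ enter $\hat B$ with coefficient $1$, so $\hat B\geq 0$, and discrepancies are preserved), and $\hat Z$ remains of Fano type over $W$, since extracting generalized log canonical places of a generalized log Calabi-Yau pair preserves the Fano type condition.

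To conclude, I would run a relative Minimal Model Program over $W$ on $\hat Z$, with scaling of an ample divisor --- it terminates because $\hat Z$ is of Fano type over $W$ --- arranged so that the strict transforms of the $E_j$ become relatively semiample over $W'$ and contract onto the $G_j$; equivalently, one replaces $\hat Z$ by the normalization of the main component of $\hat Z\times_W W'$ and runs a Minimal Model Program over $W'$ to contract the $g$-exceptional divisors other than the $E_j$. This produces a $\mathbb{Q}$-factorial model $Z'$ with a contraction $Z'\to W'$, which is a Fano type fibration (Fano type over $W$ passes to Fano type over $W'$ because $Z'\to W$ factors through $W'$, and steps of the Minimal Model Program preserve Fano type), while the induced birational map $\phi: Z'\dashrightarrow Z$ is a birational contraction whose exceptional divisors lie among the $E_j$ and are therefore generalized log canonical places of $(Z,B,{\bf M})$. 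Pulling $K_Z+B+{\bf M}_Z\sim_{\mathbb{Q}}f^*(K_W+B_W+{\bf N}_W)$ back along $\phi$ and combining it with the identity for $h$ shows that the square commutes and that $(W',B_{W'},{\bf N})$ is precisely the pair induced by the canonical bundle formula for $Z'\to W'$.

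I expect the lifting step to be the main obstacle. Knowing, via Proposition~\ref{lem-canonical-bundle-formula}, that $Z'\to W'$ induces \emph{some} generalized log canonical pair is not enough; one needs the sharper statement that it reproduces exactly the prescribed divisors $G_j$ with coefficient $1$, which rests on the divisorial and adjunction refinements of the generalized canonical bundle formula in \cite{Fil20,HL21}. A secondary, more routine point is to check that the relative Minimal Model Programs used here terminate and contract only divisors (no flips running to infinity); the Fano type hypothesis ensures this.
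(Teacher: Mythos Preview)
The paper does not supply a proof of this lemma; it is simply quoted from \cite[Lemma 2.36]{Mor21a}. Your proposal explicitly reproduces the argument of that reference, and the outline---lift generalized log canonical places from the base to the total space, extract them (cf.\ Lemma~\ref{lem-BCHM-generalized-pairs}), then run a relative MMP over $W'$ using the Fano type hypothesis---is the standard one and is correct. The lifting step you flag is indeed the crux: it is justified by the very definition of the boundary part $B_W$ in the canonical bundle formula, since $\operatorname{coeff}_{G_j}(B_{W'})=1$ means the generalized log canonical threshold of $(Z,B,{\bf M})$ over the generic point of $V_j$ with respect to the fiber is zero, forcing a vertical generalized log canonical place of $(Z,B,{\bf M})$ dominating $V_j$.
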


The following lemma is a slight improvement of \cite[Lemma 3.7]{MS21}.
\begin{lemma}\label{lem-big-klt}
    Let $(X,D,{\bf M})$ be a generalized klt projective pair and let $A$ be an effective ample $\mathbb{Q}$-divisor on $X$. Assume that $\mathcal{V}$ is a finite set of divisorial valuations for which $a_E(X,D+A, {\bf M}) \in (0,1)$ for all $E\in \mathcal{V}$. Then, there exists an effective $\mathbb{Q}$-divisor $\Delta$ on $X$, such that:
    \begin{enumerate}
        \item $\Delta$ is big,
        \item $(X,\Delta)$ is a klt pair,
        \item $K_{X}+D+A+{\bf M}_X \sim_{\mathbb{Q}} K_{X}+\Delta,$
        \item $\Delta \geq D$, and 
        \item $a_E(X,\Delta)\in (0,1)$ for all $E\in \mathcal{V}$.
    \end{enumerate}
\end{lemma}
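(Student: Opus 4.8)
\textbf{Proof plan for Lemma~\ref{lem-big-klt}.}
The goal is to absorb the ample divisor $A$ and the nef b-divisor $\mathbf M$ into a single honest boundary $\Delta$ while keeping klt-ness, $\Delta \geq D$, and the prescribed log discrepancies in the open interval $(0,1)$ for the finite set $\mathcal V$. The plan is to argue in two stages: first replace $A$ by a $\mathbb Q$-linearly equivalent general member and split it to gain room, then linearize the b-divisor contribution using Bertini-type genericity.

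First I would fix a rational $0 < \varepsilon \ll 1$ and write $A \sim_{\mathbb Q} \varepsilon A' + (1-\varepsilon) A''$ where $A'$ is a fixed ample $\mathbb Q$-divisor and $A''$ is a general effective $\mathbb Q$-divisor in a sufficiently divisible multiple of $A$; for $A''$ general the pair $(X, D + A'' )$ has the same singularities as $(X,D+A)$ away from a controlled locus, and in particular the valuations in $\mathcal V$ still have log discrepancy in $(0,1)$ by the hypothesis, provided $A''$ avoids the (finitely many) centers of the valuations in $\mathcal V$ in a suitable sense — this is where one uses that $\mathcal V$ is finite. Next, since $\mathbf M$ is nef, for any positive integer $m$ the b-divisor $\mathbf M$ can be approximated: there is a model $Y \to X$ on which $\mathbf M$ descends, and $\mathbf M_Y$ nef means $\mathbf M_Y + \frac{1}{m}(\text{ample})$ is ample, hence $\mathbb Q$-linearly equivalent to an effective divisor $G_m$ whose pushforward $G := (f_Y)_* G_m$ can be taken general; choosing $m$ large and $G_m$ general, the divisor $\tfrac{1}{k} G$ (for suitable $k$) contributes arbitrarily small discrepancy defect and, being general, does not pass through the centers of $\mathcal V$. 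The point is to realize $\varepsilon A' + {\mathbf M}_X \sim_{\mathbb Q} (\text{effective general }\mathbb Q\text{-divisor } B_0)$ with $B_0$ small enough and general enough.

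Then I would set $\Delta := D + A'' + B_0$ (after adjusting the coefficients so that numerically $K_X + \Delta \sim_{\mathbb Q} K_X + D + A + \mathbf M_X$, using $A \sim_{\mathbb Q} \varepsilon A' + (1-\varepsilon)A''$ and $\varepsilon A' + \mathbf M_X \sim_{\mathbb Q} B_0$). Property (3) is then the defining linear equivalence; property (4) is clear since $A'', B_0 \geq 0$; property (1) holds because $\Delta \geq A''$ and $A''$ is big (being a nonzero effective divisor in a multiple of the ample $A$, up to scaling we may take $A''$ big, or simply note $\Delta$ dominates an ample class); property (2), klt-ness, follows because $D + A''$ is klt for general $A''$ (as $(X,D)$ is klt and klt-ness is an open condition under adding a general small-coefficient ample divisor — more precisely $(X,D+A)$ being at worst log canonical along the divisors of $\mathcal V$ and klt elsewhere, and $B_0$ being a general small perturbation, keeps things klt), and property (5) is arranged by the genericity of $A''$ and $B_0$: neither passes through the centers $c_X(E)$ for $E \in \mathcal V$, so $a_E(X,\Delta) = a_E(X, D) - (\text{contribution of } A'' + B_0 \text{ at } E) = a_E(X, D+A+\mathbf M) \in (0,1)$, where the last equality uses that the general divisors contribute exactly the same discrepancy defect as $A$ and $\mathbf M$ do at those valuations by construction.

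The main obstacle I anticipate is property (5) together with the interaction of genericity and the b-divisor: one must ensure simultaneously that (i) the perturbations $A''$ and $B_0$ do \emph{not} raise the log discrepancy above $1$ at the valuations in $\mathcal V$ (so one cannot simply throw away all of $A$), and (ii) that they do not drop it to $0$ or below (so klt-ness survives). This forces a careful bookkeeping: the "fixed part" $\varepsilon A' $ replaced by $B_0$ must carry exactly the discrepancy weight that $A + \mathbf M$ carries at each $E \in \mathcal V$, while the "moving part" $A''$ carries the rest and, being general, avoids all the centers $c_X(E)$. Here the hypothesis that $\mathcal V$ is \emph{finite} is essential — one needs only finitely many general-position conditions — and this is precisely the improvement over \cite[Lemma 3.7]{MS21}, which I would expect to phrase via a repeated application of the standard "making boundaries big and klt" technique (e.g. Bertini plus negativity of contraction) adapted to keep the finitely many marked valuations in the prescribed range.
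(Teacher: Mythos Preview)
Your overall strategy --- absorb $A$ and $\mathbf M_X$ into an honest boundary by a Kodaira/Bertini replacement --- is the same idea the paper uses, but your handling of property~(5) has a genuine gap.

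You write that $A''$ and $B_0$ can be taken general so that neither passes through the centres $c_X(E)$ for $E\in\mathcal V$, and then conclude $a_E(X,\Delta)=a_E(X,D+A,\mathbf M)$. These two assertions are incompatible. If $A''$ and $B_0$ miss $c_X(E)$ then $\mathrm{ord}_E(A'')=\mathrm{ord}_E(B_0)=0$, so $a_E(X,\Delta)=a_E(X,D)$, and there is no reason this lies below~$1$: one has
\[
a_E(X,D)=a_E(X,D+A,\mathbf M)+\mathrm{ord}_E(A)+\mathrm{coeff}_E\bigl(f^*\mathbf M_X-\mathbf M_Y\bigr),
\]
and the last two summands are nonnegative and typically positive (e.g.\ whenever $c_X(E)\subset\mathrm{Supp}(A)$ or $\mathbf M$ does not descend near~$E$). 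Your ``obstacle'' paragraph correctly senses the tension but does not resolve it: asking $B_0$ to be \emph{general} and simultaneously to ``carry exactly the discrepancy weight of $A+\mathbf M$ at each $E\in\mathcal V$'' is asking for two opposite things. Moreover, since $B_0\sim_{\mathbb Q}\varepsilon A'+\mathbf M_X$ with $\varepsilon$ small, even a non-general $B_0$ cannot carry the full weight $\mathrm{ord}_E(A)$ of the original ample divisor.

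The paper avoids this by making the replacement \emph{upstairs}: it first passes to a log resolution $f\colon Y\to X$ on which every $E\in\mathcal V$ is an honest prime divisor, and only then writes the big and nef class $\mathbf M_Y+f^*A$ as $A_k+\tfrac{1}{k}F$ via Kodaira's lemma. Because the members of $\mathcal V$ are now codimension-one on $Y$, their contribution is simply the coefficient in a divisor on $Y$, and one checks directly that the sub-boundary $D_Y+A_k+\tfrac{1}{k}F$ has the right coefficients along each $E\in\mathcal V$; pushing forward then yields $\Delta$. The step you are missing is precisely this passage to $Y$ \emph{before} trading linear equivalence classes for effective representatives --- doing the trade on $X$ loses all control of the exceptional orders.
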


\begin{proof}
   Let $f:Y \rightarrow X$ be a log resolution of $(X,D+A,{\bf M})$, such that all the valuations in $\mathcal{V}$ are divisorial in $Y$. Let the log pullback of $(X,D+A,{\bf M})$ be $(Y,D_Y+A_Y,{\bf M}_Y)$. As ${\bf M}_Y+A_Y$ is big and nef, for every $m\in \mathbb{N}$ we can write ${\bf M}_Y+A_Y \sim_{\mathbb{Q}} A_m+\frac{F}{m}$ \cite[Example 2.2.19]{Laz04a}, where $F$ is a fixed effective $\mathbb{Q}$-divisor and $A_m$ is an ample $\mathbb{Q}$-divisor.
   By choosing $k$ large enough and $A_k$ general enough, we obtain a sub-klt pair $(Y,D_Y+A_k+\frac{F}{k})$.
   This pair will satisfy $K_Y+D_Y+A_k+\frac{F}{k}\sim_{\mathbb{Q}} K_Y+D_Y+{\bf M}_Y+A_Y$.
    Every component of $D_Y+A_k+\frac{F}{k}$ with negative coefficients is exceptional over $X$. By hypothesis, for every $E\in \mathcal{V}$, the coefficient $\text{coeff}_E(D_Y+A_Y)\in (0,1)$.
    Thus, setting $A_X=f_*(A_k)$ and $F_X=f_*(\frac{F}{k})$, the pair $(X,D+A_X+F_X)$ is klt with $A_X+F_X\sim {\bf M}+A$.
    Hence $\Delta:=D+A_X+F_X$ has the desired properties.
\end{proof}

The following is a version of \cite[Corollary 1.4.3]{BCHM10} for generalized pairs.
\begin{lemma}\label{lem-BCHM-generalized-pairs}
    Let $(X,D, {\bf M})$ be a generalized projective klt pair, $(X, B, {\bf M})$ be a generalized lc pair, and $\mathcal{V}$ a finite set of generalized log canonical centers of $(X,B,{\bf M})$. 
    Then, there exists a $\mathbb{Q}$-factorial variety $Y$, and a birational morphism $f:Y\rightarrow X$, such that the exceptional divisors of $f$ correspond to the elements of $\mathcal{V}$.
\end{lemma}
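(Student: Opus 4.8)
\emph{Proof plan.} The idea is to strip the nef b-divisor ${\bf M}$ out of the problem by means of Lemma~\ref{lem-big-klt}, reducing to the classical extraction statement for klt pairs, \cite[Corollary~1.4.3]{BCHM10}. The one point that needs a small trick is that, in order to apply Lemma~\ref{lem-big-klt}, all the valuations we want to extract must be exhibited with generalized log discrepancy in $(0,1)$ \emph{at the same time}; this is what the interpolation between $D$ and $B$ below will achieve.

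First I would set up the valuations. We may assume $\mathcal{V}\neq\varnothing$ and, discarding $X$ itself together with any element of $\mathcal{V}$ that is a prime divisor on $X$ (such a center is already a divisor and needs no extraction), that every $C\in\mathcal{V}$ has codimension at least two. For each such $C$ fix once and for all a generalized log canonical place $E_C$ of $(X,B,{\bf M})$ with $\operatorname{center}_X(E_C)=C$; one exists by the very definition of a generalized log canonical center, and it is necessarily exceptional over $X$. Put $\mathcal{W}:=\{E_C : C\in\mathcal{V}\}$. It is enough to produce a $\mathbb{Q}$-factorial projective birational morphism $f\colon Y\to X$ whose exceptional divisors are exactly the strict transforms of the $E_C$; since $\operatorname{center}_X(E_C)=C$, this gives the required bijection with $\mathcal{V}$.

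Next comes the interpolation. For $t\in(0,1)$ set $D_t:=(1-t)D+tB$, so that $K_X+D_t+{\bf M}_X=(1-t)(K_X+D+{\bf M}_X)+t(K_X+B+{\bf M}_X)$ is $\mathbb{Q}$-Cartier; since the b-divisor is unchanged, one checks that $a_E(X,D_t,{\bf M})=(1-t)\,a_E(X,D,{\bf M})+t\,a_E(X,B,{\bf M})$ for every divisorial valuation $E$ over $X$. As $(X,D,{\bf M})$ is generalized klt and $(X,B,{\bf M})$ is generalized lc, this quantity is $\geq(1-t)\,a_E(X,D,{\bf M})>0$, so $(X,D_t,{\bf M})$ is generalized klt for all $t\in(0,1)$; and for $E=E_C\in\mathcal{W}$ it equals $(1-t)\,a_{E_C}(X,D,{\bf M})$, which tends to $0$ as $t\to 1^-$. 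I would therefore fix $t_0\in(0,1)$ with $a_{E_C}(X,D_{t_0},{\bf M})\in(0,1)$ for every $C\in\mathcal{V}$ simultaneously, and pick in addition any effective ample $\mathbb{Q}$-divisor $A$ whose support contains none of the finitely many centers $C$, so that $\operatorname{ord}_{E_C}(A)=0$ and hence $a_{E_C}(X,D_{t_0}+A,{\bf M})\in(0,1)$ as well. Lemma~\ref{lem-big-klt}, applied to the generalized klt pair $(X,D_{t_0},{\bf M})$, the ample divisor $A$, and the finite set $\mathcal{W}$, then provides an effective $\mathbb{Q}$-divisor $\Delta$ such that $(X,\Delta)$ is an ordinary klt pair and $a_{E_C}(X,\Delta)\in(0,1)$ for every $C\in\mathcal{V}$.

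Finally, $(X,\Delta)$ is an honest klt pair and each $E_C\in\mathcal{W}$ is a divisorial valuation over it of log discrepancy at most $1$, so \cite[Corollary~1.4.3]{BCHM10} yields a $\mathbb{Q}$-factorial projective birational morphism $f\colon Y\to X$ extracting precisely the valuations of $\mathcal{W}$, which is exactly what we wanted. The only input that is not routine bookkeeping is \cite[Corollary~1.4.3]{BCHM10}; the single subtle point — meeting the hypotheses of Lemma~\ref{lem-big-klt} for all the chosen places at once, rather than one at a time — is precisely what the interpolation $D_t=(1-t)D+tB$ is designed to handle, and it is the step I would be most careful about.
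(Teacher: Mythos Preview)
Your proof is correct and follows essentially the same approach as the paper: interpolate between $D$ and $B$ to put the log discrepancies of the chosen places into $(0,1)$, add an ample divisor so that Lemma~\ref{lem-big-klt} applies, replace the generalized pair by an honest klt pair $(X,\Delta)$, and then invoke \cite[Corollary~1.4.3]{BCHM10}. The paper's proof differs only cosmetically: it writes the interpolation as $(1-\varepsilon)B+\varepsilon D$ and adds a \emph{small} multiple $\delta A$ of a general ample divisor rather than choosing $A$ to avoid the centers; both devices achieve $a_E(X,D_{t_0}+A,{\bf M})\in(0,1)$. Your extra care in passing from centers to places is a useful clarification of a slight imprecision in the statement (the paper's own proof silently treats $\mathcal{V}$ as a set of valuations).
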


\begin{proof}
    Let $A$ be an effective ample divisor general enough. As $(X,D,{\bf M})$ is generalized klt, for $\varepsilon>0$ and $\delta>0$ small enough, we get
    $$
    a_E(X, (1-\varepsilon)B+\varepsilon D+\delta A,{\bf M})\in (0,1)
    $$
    for every $E\in \mathcal{V}$. Thus by Lemma~\ref{lem-big-klt}, there exists a klt pair $(X,\Delta)$ such that $a_E(X, \Delta) \in (0,1)$ for every $E \in \mathcal{V}$. We can conclude by \cite[Corollary 1.4.3]{BCHM10} applied to the klt pair $(X,\Delta)$ and valuations in $\mathcal{V}$.
\end{proof}

\subsection{Coregularity}

In this section we recall the definition of coregularity and absolute coregularity for generalized pairs, and a few lemmata regarding its behavior under the minimal model program.

\begin{definition}
{\em 
Let $(X,B, {\bf M})$ be a generalized log canonical pair. Let $g:(Y,B_Y, {\bf M}) \rightarrow (X,B, {\bf M})$ be a $\mathbb{Q}$-factorial gdlt modification, with $\lfloor B_Y \rfloor=E_1+\ldots +E_r$ an snc divisor.
We define $\mathcal{D}(Y,B_Y,{\bf M})$, the \textit{dual complex} of $(Y,B_Y,{\bf M})$, to be:

\begin{enumerate}
    \item For each $E_i$, there is a vertex $s_{E_i}$ in $\mathcal{D}(Y,B_Y)$,
    \item For any subset $I \subseteq \{1,\ldots, r\}$, and any irreducible component $Z$ of $\cap_{i \in I} E_i$, there is a $(|I|-1)$-dimensional simplex $s_Z$.
    \item For any 
    \begin{itemize}
        \item $I\subseteq \{1, \ldots r\}$,
        \item  $j \in I$,
        \item  any irreducible component $Z$ of $\cap_{i \in I} E_i$,
        \item and $W$ the unique irreducible component of $\cap_{i \in I\setminus \{j\}} E_i$ containing $Z$,
    \end{itemize}
    there is a gluing map given by the inclusion of $s_W$ into $s_Z$ as the unique face that does not contain the vertex $s_{E_j}$.
\end{enumerate}
We define the \textit{dimension of the dual complex} to be the smallest dimension of a maximal simplex in $\mathcal{D}(Y,B_Y,{\bf M})$ (with respect to inclusion).
By \cite[Proposition 2.21]{FMP22}, the dimension of $\mathcal{D}(Y,B_Y, {\bf M})$ does not depend on the gdlt modification.
Hence, we can define the \textit{coregularity} of a generalized pair $(X,B, {\bf M})$, to be:

$$\text{coreg}(X,B, {\bf M}):= \text{dim}\, X-\text{dim}\, \mathcal{D}(Y,B_Y, {\bf M})-1.$$

For a generalized pair $(X,D, {\bf M})$ of log Calabi-Yau type, we define the \textit{absolute coregularity} ${\rm \hat{coreg}}(X,D, {\bf M})$ to be the smallest value of $\text{coreg}(X,B,{\bf M+ N})$, where $(X,B, {\bf M+N})$ is log Calabi-Yau, with $B\geq D$ and $N$ a nef b-divisor. 
}
\end{definition}

The coregularity of a generalized Calabi-Yau pair is preserved after taking a gdlt modification.

\begin{remark}
{\em 


    The absolute coregularity as it is defined here, could be lower than the one in \cite{Mor22}, where the b-divisor does not change for the generalized Calabi-Yau pair defining the coregularity.
    We need to use this definition for the induction in the proof of the main theorems.
}    
\end{remark}

\subsection{Coregularity under the MMP}
The following is \cite[Lemma 2.31]{FMP22}.
\begin{lemma}\label{lem-CY-coreg-MMP}
    Let $(X,B, {\bf M})$ be a generalized Calabi-Yau pair. Let $X \dashrightarrow X'$ be a birational contraction. Denote by $B'$ the push-forward of $B$ on $X'$. Then
    $$
    {\rm coreg}(X,B,{\bf M})={\rm coreg}(X',B', {\bf M}).
    $$
\end{lemma}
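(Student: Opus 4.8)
The plan is to deduce the statement from the fact that $(X',B',{\bf M})$ is again a generalized Calabi--Yau pair which is \emph{crepant birationally equivalent} to $(X,B,{\bf M})$, together with the crepant-birational invariance of the dimension of the dual complex of a $\mathbb{Q}$-factorial gdlt modification.

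First I would establish the crepant birational equivalence. Choose a common resolution $p\colon W\to X$, $q\colon W\to X'$ of the birational contraction $\phi\colon X\dashrightarrow X'$, and write the log pullbacks $K_W+B_W+{\bf M}_W=p^*(K_X+B+{\bf M}_X)$ and $K_W+B_W'+{\bf M}_W=q^*(K_{X'}+B'+{\bf M}_{X'})$, where $B'=\phi_*B$ (note $K_{X'}+B'+{\bf M}_{X'}=\phi_*(K_X+B+{\bf M}_X)\sim_{\mathbb{Q}}0$ is $\mathbb{Q}$-Cartier, so this makes sense). Since $\phi$ is a birational contraction, $q_*B_W=\phi_*p_*B_W=\phi_*B=B'=q_*B_W'$, so $\Theta:=B_W-B_W'$ is $q$-exceptional. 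On the other hand $K_W+B_W+{\bf M}_W\sim_{\mathbb{Q}}0$ and hence $\sim_{\mathbb{Q},q}0$, while $K_W+B_W'+{\bf M}_W=q^*(\,\cdot\,)\sim_{\mathbb{Q},q}0$; subtracting gives $\Theta\sim_{\mathbb{Q},q}0$, so $\Theta$ is both $q$-nef and $q$-anti-nef, and two applications of the negativity lemma force $\Theta=0$. Thus $B_W=B_W'$: the two pairs have identical log pullbacks on $W$. In particular $(X',B',{\bf M})$ is generalized log canonical (indeed generalized Calabi--Yau), every divisorial valuation has the same generalized log discrepancy over $X$ and over $X'$, and the two pairs share the same set of generalized log canonical places.

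Then I would feed this into the definition of coregularity. Take a $\mathbb{Q}$-factorial gdlt modification $g\colon(Y,B_Y,{\bf M})\to(X,B,{\bf M})$ (Lemma~\ref{lem-gdlt-mod}); by \cite[Proposition~2.21]{FMP22} the dimension of $\mathcal{D}(Y,B_Y,{\bf M})$ does not depend on this choice. Since Step~1 provides a common crepant model and identifies the generalized lc places of the two pairs, one obtains — after the blow-ups needed to dominate $X'$, which only subdivide $\mathcal{D}(Y,B_Y,{\bf M})$ without changing its dimension — a $\mathbb{Q}$-factorial gdlt modification of $(X',B',{\bf M})$ with the same underlying divisor $\lfloor B_Y\rfloor$ and hence the same dual complex. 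As $\dim X=\dim X'$ and both coregularities are computed from $\dim\mathcal{D}(Y,B_Y,{\bf M})$ via the displayed formula, they coincide. I expect the main obstacle to be precisely this last point: making rigorous that a single combinatorial datum $\mathcal{D}(Y,B_Y,{\bf M})$ can be attached to both sides, i.e.\ the crepant-birational invariance of (the relevant dimension of) the dual complex in the spirit of de Fernex--Kollár--Xu, generalized to generalized pairs; one should either quote it in that form or check that the PL-type arguments underlying \cite[Proposition~2.21]{FMP22} go through once Step~1 supplies the common crepant model. Step~1 itself is routine, the only care being the correct relative-triviality hypothesis in the negativity lemma.
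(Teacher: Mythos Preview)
The paper does not prove this lemma; it simply records it as \cite[Lemma~2.31]{FMP22}. Your two-step outline (crepant equivalence via negativity, then invariance of the dual-complex dimension) is exactly the strategy behind that reference, and your Step~1 is clean and correct.

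For Step~2, your instinct about where the difficulty lies is right, but the phrasing ``after the blow-ups needed to dominate $X'$ \ldots one obtains a $\mathbb{Q}$-factorial gdlt modification of $(X',B',{\bf M})$'' is not quite accurate: blowing up a gdlt model $Y$ of $(X,B,{\bf M})$ so that it maps to $X'$ will in general destroy the gdlt property, not merely subdivide the dual complex. The honest route is the one you allude to at the end: take separate gdlt modifications $Y\to X$ and $Y'\to X'$; Step~1 makes $(Y,B_Y,{\bf M})$ and $(Y',B_{Y'},{\bf M})$ crepant birational gdlt pairs with the same set of lc places, and then the de~Fernex--Koll\'ar--Xu weak factorization argument, as carried out for generalized pairs in \cite[Proposition~2.21]{FMP22}, gives a sequence of elementary steps (blow-ups and blow-downs in lc centers) each preserving the simple-homotopy type, hence the dimension, of the dual complex. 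So rather than trying to manufacture a single model over both sides, just quote that crepant-birational invariance directly.
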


The following lemma is \cite[Proposition 3.27]{Mor22} 
\begin{lemma}\label{lem-coreg-bir-morphism}
Let $f:Y \dashrightarrow X$ be a birational contraction, $(X,D, {\bf M})$ a Calabi-Yau type generalized pair, with $f^*(K_X+D+{\bf M}_X)=K_Y+D_Y+{\bf M}_Y$. Assume $D_{Y}\geq 0.$
Then ${\rm \hat{coreg}}(Y,D_Y,{\bf M})\geq {\rm \hat{coreg}}(X,D_X, {\bf M})$.
    
\end{lemma}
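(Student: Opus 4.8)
The plan is to exploit the description of the absolute coregularity as a minimum over auxiliary boundaries and nef b-divisors, together with the invariance of the coregularity of a generalized Calabi--Yau pair under birational contractions, that is, Lemma~\ref{lem-CY-coreg-MMP}.

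First I would fix a generalized log Calabi--Yau pair $(Y,\Gamma_Y,{\bf M}+{\bf N})$ computing ${\rm \hat{coreg}}(Y,D_Y,{\bf M})$: thus $\Gamma_Y\geq D_Y$, ${\bf N}$ is a nef b-divisor, $K_Y+\Gamma_Y+({\bf M}+{\bf N})_Y\sim_{\mathbb{Q}}0$, the pair $(Y,\Gamma_Y,{\bf M}+{\bf N})$ is generalized lc, and ${\rm coreg}(Y,\Gamma_Y,{\bf M}+{\bf N})={\rm \hat{coreg}}(Y,D_Y,{\bf M})$. Such a pair exists: $(Y,D_Y,{\bf M})$ is of Calabi--Yau type, being crepant via $f$ to the Calabi--Yau type pair $(X,D,{\bf M})$, and the hypothesis $D_Y\geq 0$ together with $f$ being a birational contraction forces the pullback to $Y$ of any boundary realizing the Calabi--Yau structure on $X$ to be effective and to still dominate $D_Y$; the minimum is attained because the coregularity lies in the finite set $\{0,\dots,\dim Y\}$.

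Next I would push this pair forward along $f$, setting $\Gamma_X:=f_*\Gamma_Y$. Pushing forward $K_Y+\Gamma_Y+({\bf M}+{\bf N})_Y\sim_{\mathbb{Q}}0$ along the birational map $f$ gives $K_X+\Gamma_X+({\bf M}+{\bf N})_X\sim_{\mathbb{Q}}0$, while $\Gamma_X\geq f_*D_Y=D$ because $\Gamma_Y-D_Y\geq 0$ has effective pushforward. The step requiring care is to check that $(X,\Gamma_X,{\bf M}+{\bf N})$ is again generalized lc, in fact crepant to $(Y,\Gamma_Y,{\bf M}+{\bf N})$: this is the standard fact that pushing a generalized log Calabi--Yau pair forward along a birational contraction preserves generalized lc singularities, proved by choosing a common resolution $p\colon W\to Y$, $q\colon W\to X$ and noting that
$$p^*\bigl(K_Y+\Gamma_Y+({\bf M}+{\bf N})_Y\bigr)-q^*\bigl(K_X+\Gamma_X+({\bf M}+{\bf N})_X\bigr)$$
is $\mathbb{Q}$-linearly trivial with vanishing pushforward to $X$, hence vanishes identically by the negativity lemma, so the generalized log discrepancies of the two pairs coincide at every divisor over $X$. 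Thus $(X,\Gamma_X,{\bf M}+{\bf N})$ is a generalized log Calabi--Yau pair with $\Gamma_X\geq D$, hence admissible in the definition of ${\rm \hat{coreg}}(X,D,{\bf M})$, and therefore ${\rm \hat{coreg}}(X,D,{\bf M})\leq{\rm coreg}(X,\Gamma_X,{\bf M}+{\bf N})$.

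Finally, Lemma~\ref{lem-CY-coreg-MMP} applied to the birational contraction $f\colon Y\dashrightarrow X$ and the generalized Calabi--Yau pair $(Y,\Gamma_Y,{\bf M}+{\bf N})$ gives ${\rm coreg}(X,\Gamma_X,{\bf M}+{\bf N})={\rm coreg}(Y,\Gamma_Y,{\bf M}+{\bf N})={\rm \hat{coreg}}(Y,D_Y,{\bf M})$, and combining with the previous inequality yields ${\rm \hat{coreg}}(X,D,{\bf M})\leq{\rm \hat{coreg}}(Y,D_Y,{\bf M})$, as desired. I expect the only genuinely delicate point to be the crepancy claim above — that forming the pushforward along the (non-morphism) birational contraction $f$ preserves the generalized lc property — which is where the negativity lemma is needed and where one must keep track of the fact that ${\bf M}+{\bf N}$ is a b-divisor, so that its traces on $W$, $Y$, $X$ are compatible; the remainder is a routine unwinding of the definition of absolute coregularity.
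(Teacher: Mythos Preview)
Your proof is correct. The paper does not actually prove this lemma itself: it merely records the statement and cites \cite[Proposition~3.27]{Mor22}. Your approach --- take a generalized Calabi--Yau structure $(Y,\Gamma_Y,{\bf M}+{\bf N})$ realizing ${\rm \hat{coreg}}(Y,D_Y,{\bf M})$, push it forward along the birational contraction $f$, use the negativity lemma on a common resolution to check that $(X,f_*\Gamma_Y,{\bf M}+{\bf N})$ is crepant to it and hence still generalized lc, and then invoke Lemma~\ref{lem-CY-coreg-MMP} --- is precisely the natural argument and is essentially what the cited reference does.
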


\subsection{Orbifold fundamental groups}

In this subsection, we define the orbifold fundamental group of a pair and prove a special case of the main theorems.

\begin{definition}
{\em 
A divisor $D$ is said to have \textit{standard} coefficients, when the coefficients are in the set $\{1-\frac{1}{n}\}_{\{n \in \mathbb{N}\}}\cup \{1\}$.
For an effective $\mathbb{Q}$-divisor $D$, the \textit{standard approximation} $D_{\text{st}}$ is the largest effective divisor with standard coefficients, such that $D_{\text{st}}\leq D$.
}   
\end{definition}

\begin{definition}
{\em
For a generalized  pair $(X,D,{\bf M})$ with $D_\text{st}=\sum_{j \in J} D_j + \sum_{i \in I} (1-\frac{1}{n_i})D_i$, we define the \textit{orbifold fundamental group of the smooth locus} $\pi_1^{\text{reg}}(X,D,{\bf M})$ to be 
$$
\pi_1(X_{\text{reg}}\setminus \text{supp}(D_{\text{st}}))/N,
$$
where $X_{\text{reg}}$ is the smooth locus of $X$ and $N$ is the normal subgroup of $\pi_1(X_{\text{reg}}\setminus \text{supp}(D_{\text{st}}))$ generated by $\gamma_i^{n_i}$, where $\gamma_i$ is a loop around $D_i$ for $i \in I$. 
}
\end{definition}

The following is a version of \cite[Theorem 2]{Bra21} for generalized pairs:

\begin{theorem}\label{thm-Bra21-gen-pair}
    Let $(X,D,{\bf M})$ be a projective generalized klt pair, such that $-(K_X+D+{\bf M}_X)$ is big and nef. Let $\Gamma \leq D$ be an effective divisor with standard coefficients.
    Then, the group $\pi_1^{\text{reg}}(X,\Gamma)$ is finite.
\end{theorem}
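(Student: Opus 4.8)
The plan is to reduce the statement for a generalized klt pair $(X,D,{\bf M})$ with $-(K_X+D+{\bf M}_X)$ big and nef to the corresponding statement for an honest klt log Fano pair without a b-divisor part, so that \cite[Theorem 2]{Bra21} (or rather its argument, which is what this theorem is re-packaging) applies. First I would apply Lemma~\ref{lem-big-klt}: taking $A$ to be a small effective ample $\mathbb{Q}$-divisor so that $-(K_X+D+{\bf M}_X)-A$ is still big and nef, and with $\mathcal{V}=\emptyset$ (or just the valuations one wishes to keep track of), one produces an effective big $\mathbb{Q}$-divisor $\Delta\geq D$ with $(X,\Delta)$ klt and $K_X+D+A+{\bf M}_X\sim_{\mathbb{Q}} K_X+\Delta$. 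The point of inserting the extra ample $A$ is that $-(K_X+\Delta)\sim_{\mathbb{Q}} -(K_X+D+{\bf M}_X)-A$ remains big and nef, so $(X,\Delta)$ is a klt pair of Fano type in the ordinary (non-generalized) sense, and moreover $\Delta\geq D\geq\Gamma$.

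Next I would compare the orbifold fundamental groups. Since $\Gamma$ has standard coefficients and $\Gamma\leq D\leq\Delta$, we have $\Gamma\leq\Delta_{\mathrm{st}}$, hence there is a natural surjection $\pi_1^{\mathrm{reg}}(X,\Delta_{\mathrm{st}})\twoheadrightarrow\pi_1^{\mathrm{reg}}(X,\Gamma)$: passing from $\Delta_{\mathrm{st}}$ to $\Gamma$ only adds relations (it enlarges the normal subgroup $N$ by which one quotients $\pi_1$ of the complement, since a smaller multiplicity imposes a stronger relation, or the divisor component disappears entirely into the open locus). Therefore it suffices to show $\pi_1^{\mathrm{reg}}(X,\Delta_{\mathrm{st}})$ is finite. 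Now $(X,\Delta_{\mathrm{st}})$ is a klt pair with standard coefficients, $-(K_X+\Delta_{\mathrm{st}})\geq -(K_X+\Delta)$ is big and nef (it differs by the effective divisor $\Delta-\Delta_{\mathrm{st}}$, and $-(K_X+\Delta)$ is big and nef), so this is exactly the hypothesis of \cite[Theorem 2]{Bra21}, which gives finiteness of $\pi_1^{\mathrm{reg}}(X,\Delta_{\mathrm{st}})$. Chasing the surjection, $\pi_1^{\mathrm{reg}}(X,\Gamma)$ is finite as well.

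The main obstacle I expect is the bookkeeping in the second paragraph: one must make sure that the map $\pi_1^{\mathrm{reg}}(X,\Delta_{\mathrm{st}})\to\pi_1^{\mathrm{reg}}(X,\Gamma)$ is genuinely well-defined and surjective when components of $\Gamma$ have smaller coefficients than, or are absent from, $\Delta_{\mathrm{st}}$ — i.e. that enlarging the orbifold structure (more components, larger multiplicities) only produces a group that surjects onto the one with the smaller orbifold structure. This is a Seifert–van Kampen / presentation argument: $\pi_1$ of the smaller open set $X_{\mathrm{reg}}\setminus\mathrm{supp}(\Delta_{\mathrm{st}})$ surjects onto $\pi_1$ of the larger open set $X_{\mathrm{reg}}\setminus\mathrm{supp}(\Gamma)$ (removing fewer divisors), compatibly with killing the meridian powers, so the induced map on orbifold groups is surjective. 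A secondary point to be careful about is the choice of $\varepsilon$ in Lemma~\ref{lem-big-klt}: one needs $-(K_X+D+{\bf M}_X)-A$ big and nef, which holds for $A$ sufficiently small since bigness and nefness of $-(K_X+D+{\bf M}_X)$ is an open-type condition for nef (adding a small ample keeps it big; subtracting a small ample from a big-and-nef class keeps it big, and we can absorb the failure of nefness by the standard trick of writing $-(K_X+D+{\bf M}_X)=$ nef $+$ ample and perturbing), after which the rest is formal.
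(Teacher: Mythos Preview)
Your overall strategy matches the paper's: use Lemma~\ref{lem-big-klt} to trade the nef b-divisor ${\bf M}$ for an honest boundary $\Delta \geq D \geq \Gamma$ with $(X,\Delta)$ klt and $-(K_X+\Delta)$ big and nef, then invoke \cite[Theorem~2]{Bra21}. However, there is a genuine gap in how you arrange nefness of $-(K_X+\Delta)$.

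Your application of Lemma~\ref{lem-big-klt} yields $K_X+\Delta \sim_{\mathbb{Q}} K_X+D+{\bf M}_X+A$, hence $-(K_X+\Delta)\sim_{\mathbb{Q}} -(K_X+D+{\bf M}_X)-A$. Subtracting an ample class from a big and nef class $L$ is \emph{never} nef unless $L$ is itself ample: if $L\cdot C=0$ for some curve $C$ then $(L-A)\cdot C<0$. Your proposed ``standard trick of writing $-(K_X+D+{\bf M}_X)=$ nef $+$ ample'' is not available---a big and nef class need not decompose that way. What Kodaira's lemma gives you is a decomposition as ample $+$ effective, and this is exactly what the paper exploits: one writes $-(K_X+D+{\bf M}_X)\sim_{\mathbb{Q}} B$ with $(X,D+B,{\bf M})$ still generalized klt, decomposes $B=A+E$ with $A$ ample and $E\geq 0$, and then applies Lemma~\ref{lem-big-klt} to the pair $(X,\,D+\varepsilon E,\,{\bf M})$ with ample class $\varepsilon A$. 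The resulting $\Delta$ satisfies
\[
-(K_X+\Delta)\;\sim_{\mathbb{Q}}\;-(K_X+D+{\bf M}_X)-\varepsilon(A+E)\;\sim_{\mathbb{Q}}\;(1-\varepsilon)B,
\]
which is manifestly big and nef. The missing idea in your argument is precisely this: one must feed the \emph{effective} part $\varepsilon E$ into the boundary before invoking Lemma~\ref{lem-big-klt}, not only the ample part.

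A smaller slip: your claim that $-(K_X+\Delta_{\mathrm{st}})$ is big and nef because it equals $-(K_X+\Delta)$ plus the effective divisor $\Delta-\Delta_{\mathrm{st}}$ is wrong---adding an effective divisor preserves bigness but not nefness. This detour is in any case unnecessary, since by definition $\pi_1^{\mathrm{reg}}(X,\Delta)=\pi_1^{\mathrm{reg}}(X,\Delta_{\mathrm{st}})$; one applies \cite[Theorem~2]{Bra21} directly to $(X,\Delta)$ and then uses the surjection onto $\pi_1^{\mathrm{reg}}(X,\Gamma)$ exactly as you describe.
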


\begin{proof}
    As $(X,D, {\bf M})$ is generalized klt weakly Fano, there exists $B$ effective, such that $(X,D+B,{\bf M})$ is generalized klt Calabi-Yau.
    Thus $B$ is big and nef, hence it can be decomposed as $B=A+E$, where $A$ is an ample $\mathbb{Q}$-divisor and $E$ is an effective $\mathbb{Q}$-divisor. 

    Thus for $0<\varepsilon <1$, the divisor $-(K_X+D+\varepsilon A +\varepsilon E +{\bf M}_X)\sim (1-\varepsilon) (A+E)$ is big and nef. By Lemma~\ref{lem-big-klt} applied to the generalized pair $(X,D+\varepsilon E, {\bf M})$ with ample divisor ${\varepsilon A}$, there exists an effective $\mathbb{Q}$-divisor $\Delta$, such that $(X,\Delta)$ is klt and $-(K_X+\Delta) \sim_{\mathbb{Q}} (K_X+D+\varepsilon E+{\bf M}_X+\varepsilon A)$ is big and nef and $\Delta \geq D+\varepsilon E \geq \Gamma$. Therefore by \cite[Theorem 2]{Bra21}, $\pi_1(X,\Gamma)$ is finite.
\end{proof}

\begin{proposition}\label{prop-dim1}
Let $(X,D,{\bf M})$ be a projective generalized klt pair of dimension 1 with standard coefficients, then the following statements hold:

\begin{enumerate}
    \item If $(X,D,{\bf M})$ is of absolute coregularity $0$, then $\pi_1(X,D,{\bf M})$ is finite.
    \item If $(X,D,{\bf M})$ is of absolute coregularity $1$, then $\pi_1(X,D,{\bf M})$ is virtually abelian of rank at most $2$.
\end{enumerate}
\end{proposition}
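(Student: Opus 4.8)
The plan is to reduce the statement to the classification of compact two-dimensional orbifolds. Since $(X,D,{\bf M})$ is generalized klt of dimension one, $X$ is a smooth projective curve, say of genus $g$, and every coefficient of $D$ is strictly less than $1$; writing $D=\sum_{i\in I}(1-\tfrac1{n_i})q_i$ with $n_i\ge 2$, the group $\pi_1^{\text{reg}}(X,D,{\bf M})$ is, by definition, the orbifold fundamental group of the compact complex $2$-orbifold with underlying surface $X$ and a cone point of order $n_i$ at each $q_i$ --- there are no punctures, as $\lfloor D\rfloor=0$, and ${\bf M}$ does not enter the group at all. I would then invoke the elementary identity $\chi^{\mathrm{orb}}:=(2-2g)-\sum_{i\in I}(1-\tfrac1{n_i})=-\deg(K_X+D)$ for the orbifold Euler characteristic, together with the classical classification of compact orientable $2$-orbifold groups by the sign of $\chi^{\mathrm{orb}}$: if $\chi^{\mathrm{orb}}>0$ then $\pi_1^{\mathrm{orb}}$ is finite; if $\chi^{\mathrm{orb}}=0$ then $\pi_1^{\mathrm{orb}}$ is virtually $\mathbb{Z}^2$; and if $\chi^{\mathrm{orb}}<0$ then $\pi_1^{\mathrm{orb}}$ contains a non-abelian free subgroup. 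Equivalently, one carries out the finite case inspection over the possibilities for $(g;n_1,\dots,n_k)$ with $\chi^{\mathrm{orb}}\ge 0$.

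It then remains to read off the sign of $\chi^{\mathrm{orb}}$ from the hypotheses. Since $(X,D,{\bf M})$ has a well-defined absolute coregularity, it is of generalized Calabi-Yau type, so there exist $B\ge D$ and a nef b-divisor ${\bf N}$ with $K_X+B+({\bf M}+{\bf N})_X\sim_{\mathbb{Q}}0$. As ${\bf M}+{\bf N}$ is nef, $({\bf M}+{\bf N})_X$ has nonnegative degree on the curve $X$, hence
$$\deg(K_X+D)\le \deg(K_X+B)=-\deg\bigl(({\bf M}+{\bf N})_X\bigr)\le 0,$$
so $\chi^{\mathrm{orb}}\ge 0$; combined with the classification this already yields part (2), namely that $\pi_1^{\text{reg}}(X,D,{\bf M})$ is virtually abelian of rank at most $2$ (rank $0$ when $\chi^{\mathrm{orb}}>0$, rank $2$ when $\chi^{\mathrm{orb}}=0$). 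For part (1), absolute coregularity $0$ lets one choose $(B,{\bf N})$ as above with the extra property $\lfloor B\rfloor\ne 0$; since $D$ is klt it has no coefficient-$1$ point, so $B\ne D$, which makes the first inequality strict and forces $\chi^{\mathrm{orb}}>0$, whence $\pi_1^{\text{reg}}(X,D,{\bf M})$ is finite.

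The only non-formal input here is the classification of compact two-orbifold groups according to the sign of the orbifold Euler characteristic, which is classical; all the rest is degree bookkeeping on a curve. The step I expect to require the most care is the translation of the coregularity hypotheses into the divisorial statements used above: in dimension one a $\mathbb{Q}$-factorial gdlt modification of $(X,B,{\bf M}+{\bf N})$ may be taken to be $X$ itself, so its dual complex is exactly the finite set of coefficient-$1$ points of $B$, whence $\mathrm{coreg}(X,B,{\bf M}+{\bf N})=0$ precisely when $\lfloor B\rfloor\ne 0$, while the hypothesis of absolute coregularity $1$ amounts simply to $(X,D,{\bf M})$ being of Calabi-Yau type.
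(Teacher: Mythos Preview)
Your proof is correct and takes a genuinely different route from the paper. The paper proceeds by direct case analysis: for coregularity $0$ it observes that $X\cong\mathbb{P}^1$, so $(X,D)$ is log Fano and finiteness follows from \cite[Theorem~2]{Bra21}; for coregularity $1$ it separates out the elliptic curve, the Fano case on $\mathbb{P}^1$ (again via \cite{Bra21}), and then writes down explicit presentations for the four Euclidean orbifold groups $(2,2,2,2)$, $(2,3,6)$, $(2,4,4)$, $(3,3,3)$, exhibiting in each a normal $\mathbb{Z}^2$ of index $2,6,4,3$ respectively. You instead package everything into the single inequality $\chi^{\mathrm{orb}}=-\deg(K_X+D)\ge 0$, derived from the Calabi--Yau type hypothesis via the nefness of $({\bf M}+{\bf N})_X$, and then invoke the classical trichotomy for compact orientable $2$-orbifolds by the sign of $\chi^{\mathrm{orb}}$. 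Your argument is cleaner, self-contained (it does not need the Fano finiteness theorem of \cite{Bra21}), and makes transparent why the moduli part ${\bf M}$ is harmless here; the paper's approach, on the other hand, yields the explicit indices of the abelian subgroups, which are of independent interest for the effective bounds discussed later in Section~\ref{sec:eff-virt-solv}.
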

\begin{proof}
Since $(X,D)$ is klt, then $\lfloor D \rfloor=0$. If $(X,D)$ is of absolute coregularity $0$ and dimension $1$, this means that $(X,D+E)$ is Calabi-Yau for some effective $\mathbb{Q}$-divisor $E$, hence $X$ is isomorphic to $\mathbb{P}^1$. Therefore $(X,D)$ is Fano, and hence by \cite[Theorem 2]{Bra21} the group $\pi_1(X,D_\text{st})$ is finite.

If $(X,D)$ is of absolute coregularity $1$ and dimension $1$, then $(X,D_\text{st})$ is either of absolute coregularity $0$ or $1$, in the first case we can reduce to the previous paragraph. 

If $(X,D_\text{st})$ has absolute coregularity $1$, then either $X$ is an elliptic curve with $D=0$, or $X=\mathbb{P}^1$. In the first case, we have that $\pi_1(X,D_\text{st})=\mathbb{Z}\times \mathbb{Z}$, which satisfies the theorem. 
If $(X,D_\text{st})$ is Fano, then again by \cite[Theorem 2]{Bra21}, we are done. Hence we only need to consider $(\mathbb{P}^1,D)$ klt Calabi-Yau. Therefore the only possibilities for $D$ are $(\frac{1}{2}P_1+\frac{1}{2}P_2+\frac{1}{2}P_3+\frac{1}{2}P_4)$, $(\frac{1}{2}P_1+\frac{2}{3}P_2+\frac{5}{6}P_3)$, $(\frac{1}{2}P_1+\frac{3}{4}P_2+\frac{3}{4}P_3)$ and $(\frac{2}{3}P_1+\frac{2}{3}P_2+\frac{2}{3}P_3)$.
We can give  explicit descriptions of each of the corresponding fundamental groups:

\begin{enumerate}
    \item $\pi_1(\mathbb{P}^1,\frac{1}{2}P_1+\frac{1}{2}P_2+\frac{1}{2}P_3+\frac{1}{2}P_4)\simeq \langle \alpha,\beta,\gamma,\delta \mid \alpha\beta\gamma\delta, \alpha^2,\beta^2,\gamma^2,\delta^2 \rangle$,
    which has a normal abelian subgroup $N=\langle \alpha\beta, \alpha\gamma \rangle \simeq \mathbb{Z}^2$ of index 2.

    \item $\pi_1(\mathbb{P}^1,\frac{1}{2}P_1+\frac{2}{3}P_2+\frac{5}{6}P_3\simeq \langle \alpha,\beta,\gamma \mid \alpha\beta\gamma, \alpha^2,\beta^3,\gamma^6 \rangle$,
    which has a normal abelian subgroup $N=\langle\beta \gamma^4 ,\beta^2\gamma^2\rangle \simeq\mathbb{Z}^2$ of index 6.

    \item $\pi_1(\mathbb{P}^1,\frac{1}{2}P_1+\frac{3}{4}P_2+\frac{3}{4}P_3)\simeq \langle \alpha,\beta,\gamma \mid \alpha\beta\gamma, \alpha^2,\beta^4,\gamma^4 \rangle$,
    which has a normal abelian subgroup $N=\langle\beta \gamma^{3}, \beta ^3 \gamma \rangle \simeq \mathbb{Z}^2$ of index 4.

    \item $\pi_1(\mathbb{P}^1,\frac{2}{3}P_1+\frac{2}{3}P_2+\frac{2}{3}P_3)\simeq \langle \alpha,\beta,\gamma \mid \alpha\beta\gamma, \alpha^3,\beta^3,\gamma^3 \rangle$,
    which has a normal abelian subgroup $N=\langle \alpha\beta^2, \alpha^2\beta  \rangle \simeq\mathbb{Z}^2$ of index 3.
    
\end{enumerate}

\end{proof}

The following statement is \cite[Theorem 2.2]{CC14}.

\begin{proposition} \label{prop-dim2}
    Let $(X,D)$ be a projective klt Calabi-Yau pair of dimension $2$, with standard coefficients, then $\pi_1(X,D)$ is virtually abelian of rank at most $4$.
\end{proposition}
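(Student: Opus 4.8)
The plan is to recognize $(X,D)$ as a compact complex orbifold surface with torsion orbifold canonical class, i.e. of orbifold Kodaira dimension zero, and then run it through the orbifold analogue of the Enriques--Kodaira classification (equivalently, the structure theory of klt Calabi--Yau surface pairs). Since $D$ has standard coefficients we have $D=D_{\rm st}$, so by definition $\pi_1(X,D)=\pi_1^{\rm reg}(X,D)$ is the orbifold fundamental group attached to the data $(X_{\rm reg},\,\mathrm{supp}(D),\,\{n_i\})$, and the whole argument is a computation of this group case by case along the classification.

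The first step is to pass from the singular pair to a \emph{smooth} orbifold surface without changing $\pi_1$. As $X$ is a klt surface it has only quotient singularities; take a log resolution $\mu\colon Y\to X$ and define $\Delta_Y$ by $K_Y+\Delta_Y=\mu^*(K_X+D)$, so $K_Y+\Delta_Y\sim_{\mathbb{Q}}0$. Replacing the coefficients of the strict transform of $D$ by their standard approximation and absorbing the $\mu$-exceptional divisors (whose coefficients lie in $[0,1)$, since $X$ is klt) into the orbifold boundary, one obtains a smooth orbifold surface $(Y,\Delta_Y)$ with $K_Y+\Delta_Y$ torsion. The exceptional locus of $\mu$ is a tree of rational curves, hence the loops around it are already killed in the orbifold fundamental group, and the natural map identifies $\pi_1^{\rm reg}(Y,\Delta_Y)$ with $\pi_1(X,D)$; one may further contract $(-1)$-curves to reach an orbifold-minimal model, again without changing $\pi_1$, by the same kind of reasoning as in the proof of Theorem~\ref{thm-Bra21-gen-pair}.

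The second step is to invoke the classification of smooth orbifold-minimal surfaces of Kodaira dimension zero: after a finite orbifold-\'etale cover --- which only replaces $\pi_1$ by a finite-index subgroup --- one lands in one of the cases (i) an abelian surface with empty boundary, (ii) a K3 or Enriques surface with empty boundary, (iii) a bielliptic surface with empty boundary, or (iv) an orbifold carrying a fibration over an orbifold curve whose orbifold fibres are elliptic curves or orbifold $\mathbb{P}^1$'s of Kodaira dimension zero. In cases (i)--(iii) the fundamental group is, respectively, $\mathbb{Z}^4$, finite, and virtually $\mathbb{Z}^4$, all of rank at most $4$. In case (iv) the homotopy sequence of the orbifold fibration exhibits $\pi_1$ as an extension of the orbifold $\pi_1$ of the base (finite or virtually $\mathbb{Z}^2$, by Proposition~\ref{prop-dim1}) by the orbifold $\pi_1$ of the generic fibre (again finite or virtually $\mathbb{Z}^2$, by Proposition~\ref{prop-dim1}); when the fibre group is finite this is handled directly by Lemma~\ref{lem-virtual-groups}, and when it is $\mathbb{Z}^2$ one uses the classification of K\"ahler surface groups of Kodaira dimension zero (rather than pure group theory, which would also allow $\mathrm{Sol}$-type extensions) to conclude that the extension is virtually abelian of rank at most $2+2=4$. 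Assembling the cases and using Lemma~\ref{lem-virtual-groups}, $\pi_1(X,D)$ is virtually abelian of rank at most $4$.

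The main obstacle is the first step: setting up the dictionary between the klt pair $(X,D)$ and a genuinely smooth orbifold surface with the \emph{same} fundamental group, while preserving the standard-coefficient and Calabi--Yau conditions and verifying that the resolution contributes no new infinite part of $\pi_1$ --- this is exactly where the quotient-singularity nature of klt surface singularities and the rationality of exceptional curves are used. A secondary technical point is the careful bookkeeping of multiple fibres and of the orbifold structure induced on the base in case (iv); since these only affect finite-index and finite-kernel data, the rank bound $4$ is unaffected. (This statement is \cite[Theorem~2.2]{CC14}, which we may simply cite.)
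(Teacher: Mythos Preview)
The paper does not prove this proposition at all: it is stated as a direct citation of \cite[Theorem~2.2]{CC14}, with no argument given. Your parenthetical remark at the very end --- that one may simply cite \cite[Theorem~2.2]{CC14} --- is therefore exactly the paper's ``proof'', and is all that is needed here.

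Your sketch, however, does not stand on its own, and the gap is precisely in your first step. You claim that after a log resolution $\mu\colon Y\to X$ one has $\pi_1^{\rm reg}(Y,\Delta_Y)=\pi_1^{\rm reg}(X,D)$, justified by ``the exceptional locus is a tree of rational curves, hence the loops around it are already killed''. This is false in general. The coefficients of the $\mu$-exceptional curves in $\Delta_Y$ need not be standard, and their standard approximations can be~$0$ (e.g.\ for an $A_n$ singularity with $D=0$, all exceptional $(-2)$-curves have coefficient~$0$), so the orbifold $(Y,(\Delta_Y)_{\rm st})$ imposes no relation on the loops around them; yet those loops are nontrivial of finite order in $\pi_1(X_{\rm reg})$. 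What one actually obtains is only a surjection $\pi_1^{\rm reg}(X,D)\twoheadrightarrow\pi_1^{\rm reg}(Y,(\Delta_Y)_{\rm st})$, which goes in the wrong direction for your purposes. Relatedly, replacing $\Delta_Y$ by its standard approximation destroys the numerical triviality $K_Y+\Delta_Y\sim_{\mathbb Q}0$, so the resulting smooth orbifold is no longer Calabi--Yau and the $\kappa=0$ classification does not apply to it. The correct set-up (and the one used in \cite{CC14}) is to exploit that klt surface singularities are quotient singularities and view $(X,D)$ itself as a compact K\"ahler orbifold with torsion orbifold canonical class, rather than passing to a smooth model.

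A second loose end is the fibred case in your Step~2: when both fibre and base have virtually~$\mathbb Z^2$ fundamental group, pure group theory allows solvable non-virtually-abelian extensions, and your appeal to ``the classification of K\"ahler surface groups of Kodaira dimension zero'' is essentially invoking the very result under discussion.
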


\section{Fundamental Groups under Morphisms}

In this section we prove some results regarding the behavior of the fundamental group after performing certain birational maps.
 
\begin{lemma}\label{lem-dlt-group}
Let $(X,D, {\bf M})$ be a generalized  projective klt pair, such that there exists a generalized Calabi-Yau pair $(X,B,{\bf M})$, with $D \leq B$. 
Let $\phi:Y\rightarrow X$ be a projective morphism, only extracting  generalized log canonical places of $(X,B, {\bf M})$, with $\phi ^*(K_X+B+{\bf M}_X)=K_Y+B_Y+{\bf M}_Y$ and  $\phi ^*(K_X+D+{\bf M}_X)=K_Y+D_Y+{\bf M}_Y$. 

Then, for $\alpha=\frac{1}{m}$, where $m$  is a natural number, large and divisible enough, there exists an effective divisor $P_Y$ on $Y$, such that:
\begin{enumerate}
    \item $D_Y \leq P_Y \leq \alpha D_Y+(1-\alpha)B_Y$, and
    \item there is a surjection of orbifold fundamental groups:
$$\pi_1^{\rm reg}(Y,P_Y, {\bf M})\twoheadrightarrow \pi_1^{\rm reg}(X,D, {\bf M}).$$
\end{enumerate}
   
\end{lemma}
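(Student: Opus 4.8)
The plan is to work on $Y$ and compare the two open varieties $Y_{\rm reg}\setminus\operatorname{supp}(P_{Y,\rm st})$ and $X_{\rm reg}\setminus\operatorname{supp}(D_{\rm st})$, reducing the statement to a surjectivity statement for $\pi_1$ of quasi-projective varieties under the birational morphism $\phi$. First I would choose $m$ large and divisible enough so that, setting $\alpha=\frac1m$ and $P_Y := \alpha D_Y + (1-\alpha)B_Y$ (or a slight modification of it), every coefficient of $P_Y$ that exceeds the corresponding coefficient of $D_Y$ is still of the form $1-\frac1k$ after standard approximation on the relevant divisors, and so that $P_{Y,\rm st}\geq D_{Y,\rm st}$ componentwise with the multiplicities $n_i$ appearing in $D_{Y,\rm st}$ dividing those appearing in $P_{Y,\rm st}$; this is what lets the normal subgroup $N$ defining the orbifold quotient for $P_Y$ map onto the one for $D_Y$. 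Property (1) is then immediate from the construction $D_Y\leq \alpha D_Y+(1-\alpha)B_Y\leq P_Y$ if we take $P_Y$ slightly below the convex combination, or with equality if $\alpha D_Y+(1-\alpha)B_Y$ already has suitable coefficients.

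For (2), I would proceed in two steps. \emph{Step A (birational invariance on the source/target).} Since $\phi$ is birational and only extracts divisors $E$ which are generalized log canonical places of $(X,B,{\bf M})$ — hence appear in $B_Y$ with coefficient $1$, and by the construction of $P_Y$ as a convex combination with $B_Y$, appear in $P_Y$ with coefficient $1-\frac1m$ or more, so in particular appear in $\lfloor P_{Y,\rm st}\rfloor$ or carry a nontrivial orbifold weight — the complement $Y_{\rm reg}\setminus\operatorname{supp}(P_{Y,\rm st})$ maps to $X$ with image containing the big open set $X_{\rm reg}\setminus\operatorname{supp}(D_{\rm st})$ minus a closed subset of codimension $\geq 2$ (the $\phi$-exceptional locus downstairs, together with the non-snc/singular locus). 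Removing a closed subset of codimension $\geq 2$ from a normal variety does not change $\pi_1$, so $\pi_1(X_{\rm reg}\setminus\operatorname{supp}(D_{\rm st}))$ is computed by this open set. \emph{Step B (surjectivity of $\pi_1$).} The morphism $Y_{\rm reg}\setminus\operatorname{supp}(P_{Y,\rm st})\to X_{\rm reg}\setminus\operatorname{supp}(D_{\rm st})$ (after the above identifications) is surjective with connected fibers over the generic point, being birational, so on $\pi_1$ it induces a surjection: any loop downstairs lifts to a loop upstairs up to the boundary we have thrown away, and the extra $\phi$-exceptional divisors we removed upstairs only kill classes, they do not obstruct lifting. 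Concretely, I would invoke the standard fact that a dominant morphism of normal varieties with a rational section (or just birational) induces a surjection on topological fundamental groups of suitable Zariski-open subsets. Finally, passing to the orbifold quotients: the orbifold weights $n_i$ on $D_{Y,\rm st}$ divide those on $P_{Y,\rm st}$ by our choice of $m$, so the meridian relations imposed downstairs are consequences of those imposed upstairs, and the surjection descends to $\pi_1^{\rm reg}(Y,P_Y,{\bf M})\twoheadrightarrow\pi_1^{\rm reg}(X,D,{\bf M})$.

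The main obstacle I expect is bookkeeping in Step A: making sure that \emph{every} divisor removed upstairs but not downstairs is either $\phi$-exceptional (hence its image has codimension $\geq 2$ and can be harmlessly deleted downstairs) or else already lies in $\operatorname{supp}(D_{\rm st})$ pulled back. This is exactly where the hypothesis that $\phi$ only extracts lc places of $(X,B,{\bf M})$, combined with the convex-combination definition of $P_Y$, is essential: an extracted divisor $E$ has $\operatorname{coeff}_E(B_Y)=1$, hence $\operatorname{coeff}_E(P_Y)\geq (1-\alpha)\cdot 1 = 1-\frac1m$, so $E$ genuinely contributes to $\operatorname{supp}(P_{Y,\rm st})$ and gets removed upstairs; a non-extracted, non-exceptional prime divisor is the strict transform of a component of $D$ or of $B$ and its coefficient behaves predictably under pullback. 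I would also need to be slightly careful that $P_Y$ is effective and that $P_{Y,\rm st}$ really dominates $D_{Y,\rm st}$ — this forces the "divisible enough" clause on $m$, to clear denominators so that the standard approximations interlock. The comparison of fundamental groups itself (Steps B and the descent to orbifold groups) is then routine.
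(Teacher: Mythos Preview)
Your proposal has a genuine gap at the crucial step. You correctly set up the birational comparison and obtain a surjection on the level of $\pi_1$ of the underlying open varieties: removing the $\phi$-exceptional divisors $E$ upstairs, the map $\phi$ identifies $Y_{\rm reg}\setminus\operatorname{supp}(P_{Y,\rm st})$ with an open subset of $X_{\rm reg}\setminus\operatorname{supp}(D_{\rm st})$ whose complement has codimension $\geq 2$, and this does give a surjection $\pi_1(Y_{\rm reg}\setminus\operatorname{supp}(P_{Y,\rm st}))\twoheadrightarrow\pi_1(X_{\rm reg}\setminus\operatorname{supp}(D_{\rm st}))$. The problem is the descent to the \emph{orbifold} quotients. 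Upstairs you impose the relation $\gamma_E^m=1$ for each exceptional $E$ (coming from the coefficient $1-\tfrac1m$ in $P_Y$); for the surjection to factor through $\pi_1^{\rm reg}(Y,P_Y,{\bf M})$, the image of $\gamma_E^m$ must be trivial in $\pi_1^{\rm reg}(X,D,{\bf M})$. Your divisibility bookkeeping only handles the strict transforms of components of $D$; it says nothing about $\gamma_E$, because $E$ has no counterpart divisor downstairs. The image of $\gamma_E$ is a loop near the codimension~$\geq 2$ center $\phi(E)$, and since $\phi(E)$ will typically meet $X_{\rm sing}$ or $\operatorname{supp}(D)$, this loop need not be trivial, nor is there any a~priori reason it has \emph{any} finite order in $\pi_1^{\rm reg}(X,D,{\bf M})$. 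Your claim that ``the extra $\phi$-exceptional divisors we removed upstairs only kill classes'' gets the direction backwards: removing $E$ \emph{adds} the generator $\gamma_E$, and quotienting by $\gamma_E^m$ does not kill $\gamma_E$ itself.

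The paper's proof supplies exactly this missing ingredient: since $(X,D,{\bf M})$ is generalized klt, the \emph{regional} fundamental group $\pi_1^{\rm reg}(X,D;C)$ is finite for each irreducible component $C$ of $\phi(E)$ by \cite[Theorem~1]{Bra21}. Hence the image of each $\gamma_E$ in $\pi_1^{\rm reg}(X,D,{\bf M})$ has finite order, and one may choose a single integer $q$ annihilating all of them. The paper then takes $P_Y=D'+(1-\tfrac1q)E$ (with $D'$ the strict transform of $D$), so that the relation $\gamma_E^q=1$ already holds in the target and the surjection descends. This finiteness of local fundamental groups is the one substantive input; without it the lemma would fail, and it is precisely what your proposal omits.
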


\begin{proof}

We have that $B_Y \geq D_Y$. By hypothesis $B_Y=B'+E$, where $B'$ is the strict transform of $B$ and $E$ is a reduced exceptional divisor. Let $D'$ be the strict transform of $D$.
Thus 
\begin{equation*}
\begin{split}
\pi_1^{\rm reg}(Y,D'+E, {\bf M}) & =\pi_1(Y_{\rm reg}\backslash {\rm supp}\, (D'\cup E) )/N_{D'} \\
& \simeq \pi_1(X_{\rm reg}\backslash { \rm supp}\, (D \cup \phi(E)) )/N_{D} \\
& \twoheadrightarrow \pi_1(X_{\rm reg}\backslash D)/N_D=\pi_1^{\rm reg}(X,D,{\bf M}),
\end{split}
\end{equation*}
where $N_{D'}$ and $N_{D}$ are the normal subgroups generated by elements of the form $\gamma_i^{n_i}$, where $\gamma_i$ are loops around the divisors $D_i$ with coefficient $1-\frac{1}{n_i}$ in the standard approximations of $D'$ and $D$, respectively.

Let us denote the surjection $f:\pi_1^{\rm reg}(Y,D'+E, {\bf M}) \twoheadrightarrow \pi_1^{\rm reg}(X,D,{\bf M})$, induced as displayed above by the surjection $\hat{f}:\pi_1(Y_{\rm reg}\backslash {\rm supp}\, D' \cup E ) \twoheadrightarrow \pi_1 (X_{\rm reg}\backslash {\rm supp}\,  D \cup \phi(E)).$

As $(X,D)$ is a klt pair, for any irreducible component $C$ of $\phi(E)$, we have that the regional fundamental group $\pi^{\rm reg}_1(X,D;C)$ is finite, by \cite[Theorem 1]{Bra21}.  Thus every loop $\gamma$ around $C$ in $\pi^{\rm reg}_1(X,D;C)$, has finite order. We can choose a  constant $q$ not depending on the irreducible component $C$, such that $\gamma^q$ is trivial for every loop.
Therefore $\hat{\gamma}^q$ is trivial in $\pi_1^{\rm reg}(X,D)$, where $\hat{\gamma}$ is the class of the loop $\gamma$ in $\pi_1^{\rm reg}(X,D)$ induced by inclusion.

Therefore the kernel of $\hat{f}:\pi_1(Y_{\rm reg}\backslash {\rm supp}D' \cup E ) \twoheadrightarrow \pi_1 (X_{\rm reg}\backslash {\rm supp} D \cup \phi(E))$ contains all elements $\gamma^{q}$, where $\gamma$ is a loop around an irreducible component of $E$.

Thus the surjection  $f:\pi_1^{\rm reg}(Y,D'+E, {\bf M}) \twoheadrightarrow \pi_1^{\rm reg}(X,D,{\bf M})$ can be extended to:

$$f:\pi_1^{\rm reg}\left(Y,D'+\left(1-\frac{1}{q}\right)E, {\bf M}\right)=\pi_1^{\rm reg}\left(Y,D'+E, {\bf M}\right)/N_{\left(1-\frac{1}{q}\right)E} \twoheadrightarrow \pi_1^{\rm reg}(X,D,{\bf M}).$$

\end{proof}

\begin{lemma}\label{lem-bir-map-group}
Let $(X,D, {\bf M})$ be a projective klt pair, such that there exists a generalized Calabi-Yau pair $(X,B, {\bf M})$, with $D \leq B$.
Let $\phi: Y \dashrightarrow X$ be a birational contraction that only extracts generalized log canonical places from $(X,B, {\bf M})$, with $\phi ^*(K_X+B+{\bf M}_X)=K_Y+B_Y+{\bf M}_Y$ and  $\phi ^*(K_X+D+{\bf M}_X)=K_Y+D_Y+{\bf M}_Y$. 

Then, for $\alpha=\frac{1}{m}$, where $m$  is a natural number, large and divisible enough, there exists an effective divisor $P_Y$ on $Y$, such that:
\begin{enumerate}
    \item $D_Y \leq P_Y \leq \alpha D_Y+(1-\alpha)B_Y$, and
    \item there is a surjection of orbifold fundamental groups:
$$\pi_1^{\rm reg}(Y,P_Y, {\bf M})\twoheadrightarrow \pi_1^{\rm reg}(X,D, {\bf M}).$$
\end{enumerate}

\end{lemma}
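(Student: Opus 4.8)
The plan is to reduce the statement to the case of a projective birational \emph{morphism}, which is exactly Lemma~\ref{lem-dlt-group}, by factoring $\phi$ through an auxiliary $\mathbb{Q}$-factorial model that extracts the same divisors as $\phi$ and is therefore related to $Y$ only by an isomorphism in codimension one. First I would collect the $\phi$-exceptional prime divisors on $Y$ into a finite set $\mathcal{V}$; by hypothesis each of them is a generalized log canonical place of $(X,B,{\bf M})$. Applying Lemma~\ref{lem-BCHM-generalized-pairs} to the generalized klt pair $(X,D,{\bf M})$, the generalized lc pair $(X,B,{\bf M})$ and the set $\mathcal{V}$, I obtain a $\mathbb{Q}$-factorial variety $W$ together with a projective birational morphism $g\colon W\to X$ whose exceptional divisors are exactly the elements of $\mathcal{V}$. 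I write $g^*(K_X+B+{\bf M}_X)=K_W+B_W+{\bf M}_W$ and $g^*(K_X+D+{\bf M}_X)=K_W+D_W+{\bf M}_W$; note that $B_W$ is the strict transform of $B$ plus the reduced sum of the divisors in $\mathcal{V}$, since the latter are log canonical places, so $g$ satisfies the hypotheses of Lemma~\ref{lem-dlt-group}.

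Next, consider the induced birational map $h:=\phi^{-1}\circ g\colon W\dashrightarrow Y$. Since $\phi$ is a birational contraction, the prime divisors on $Y$ are precisely the strict transforms of the prime divisors on $X$ together with the divisors in $\mathcal{V}$, and the very same description holds on $W$; hence $h$ is an isomorphism in codimension one. As log discrepancies depend only on the valuation and not on the model, $h$ identifies the coefficient of every prime divisor in $B_W$ (resp.\ $D_W$) with its coefficient in $B_Y$ (resp.\ $D_Y$); in particular $h_*B_W=B_Y$ and $h_*D_W=D_Y$, and likewise for the standard approximations.

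Now I would apply Lemma~\ref{lem-dlt-group} to the morphism $g\colon W\to X$: for $\alpha=\tfrac1m$ with $m$ large and divisible enough there is an effective divisor $P_W$ on $W$ with $D_W\leq P_W\leq \alpha D_W+(1-\alpha)B_W$ and a surjection $\pi_1^{\rm reg}(W,P_W,{\bf M})\twoheadrightarrow\pi_1^{\rm reg}(X,D,{\bf M})$. Setting $P_Y:=h_*P_W$ and using that $h$ preserves divisor coefficients gives $D_Y\leq P_Y\leq\alpha D_Y+(1-\alpha)B_Y$, which is property~(1). For property~(2) it suffices to produce an isomorphism $\pi_1^{\rm reg}(W,P_W,{\bf M})\cong\pi_1^{\rm reg}(Y,P_Y,{\bf M})$ and compose. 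The map $h$ restricts to an isomorphism between open subsets $U\subseteq W$ and $V\subseteq Y$ whose complements have codimension $\geq 2$; this isomorphism carries $U\cap W_{\rm reg}$ onto $V\cap Y_{\rm reg}$ and the trace of $(P_W)_{\rm st}$ onto that of $(P_Y)_{\rm st}$. Since deleting a closed subset of codimension $\geq 2$ from the smooth locus of a normal variety changes neither $\pi_1$ nor the quotient defining the orbifold fundamental group attached to a boundary, we obtain $\pi_1^{\rm reg}(W,P_W,{\bf M})\cong\pi_1^{\rm reg}(Y,P_Y,{\bf M})$, and composing with the surjection above finishes the proof.

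The step I expect to be the main obstacle is the last one: one must check carefully that a small birational map induces an isomorphism of \emph{orbifold} fundamental groups of the smooth loci, i.e.\ that the indeterminacy loci meet the smooth loci only in codimension $\geq 2$, that the boundary divisors together with their multiplicities correspond under the codimension-one isomorphism, and that the normal subgroups generated by the appropriate powers of meridians match. Once this is in place, the rest is routine bookkeeping on top of the morphism case handled in Lemma~\ref{lem-dlt-group} and the existence statement of Lemma~\ref{lem-BCHM-generalized-pairs}.
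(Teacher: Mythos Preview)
Your proposal is correct and follows essentially the same route as the paper: use Lemma~\ref{lem-BCHM-generalized-pairs} to produce a genuine morphism $W\to X$ (the paper calls it $Y'\to X$) extracting exactly the $\phi$-exceptional divisors, observe that the induced map $W\dashrightarrow Y$ is small so the orbifold fundamental groups agree, and then invoke Lemma~\ref{lem-dlt-group} on the morphism side. The paper simply asserts the isomorphism $\pi_1^{\rm reg}(Y',P_{Y'},{\bf M})\simeq\pi_1^{\rm reg}(Y,P_Y,{\bf M})$ under the small map without further comment, whereas you spell out the codimension-two argument; this extra care is appropriate and does not constitute a different method.
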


\begin{proof}
By Lemma~\ref{lem-BCHM-generalized-pairs}  there exists a birational morphism $f:Y'\rightarrow X$ extracting the same glc places as $\phi: Y \dashrightarrow X$. Let $P_{Y'}$  be defined by the equation
$$g^*(K_Y+P_Y+{\bf M}_Y)=K_{Y'}+P_{Y'}+ {\bf M}_{Y'}.$$
The map $g:Y'\dashrightarrow Y$ is small, hence $\pi_1^{\rm reg}(Y', P_{Y'}, {\bf M})\simeq \pi_1^{\rm reg}(Y,P_Y, {\bf M})$. By replacing $Y$ with $Y'$, we can assume that we have a projective birational morphism $f:Y \rightarrow X$ only extracting generalized log canonical places of $(X,B,{\bf M})$. Thus, we can conclude by applying Lemma~\ref{lem-dlt-group}.
\end{proof}

The following lemma is part of the proof of \cite[Theorem 3.3]{BFMS20} for generalized pairs.
\begin{lemma}\label{lem-MMP-group}
Let $(X,D, {\bf M})$ be a gklt pair. Let $\phi:X \dashrightarrow X'$ be either a flip or a divisorial contraction, with $D':=\phi_*(D')$. Then, there is an induced surjective group homomorphism
$$\pi_1^{\rm reg}(X',D', {\bf M})\twoheadrightarrow\pi_1^{\rm reg}(X,D, {\bf M}).$$
\end{lemma}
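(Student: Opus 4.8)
The plan is to analyze the two cases --- flips and divisorial contractions --- separately, in both cases exhibiting the map $\pi_1^{\rm reg}(X',D',{\bf M}) \to \pi_1^{\rm reg}(X,D,{\bf M})$ as induced by an inclusion of open subsets (up to removing the indeterminacy and exceptional loci), and then checking surjectivity by a codimension count. First I would recall that a birational map $\phi\colon X \dashrightarrow X'$ in the MMP is an isomorphism over a big open set: there are closed subsets $Z \subset X$ and $Z' \subset X'$ such that $\phi$ restricts to an isomorphism $X \setminus Z \xrightarrow{\sim} X' \setminus Z'$. In the case of a \emph{flip}, both $Z$ and $Z'$ have codimension at least $2$ in $X$ and $X'$ respectively; in the case of a \emph{divisorial contraction}, $Z$ is the contracted divisor (codimension $1$ in $X$) while $Z'$ has codimension at least $2$ in $X'$. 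Since $D' = \phi_*D$ and the discrepancy convention matches the strict transforms away from the exceptional loci, the standard approximations $D_{\rm st}$ and $D'_{\rm st}$ also agree on the common open set.

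Next I would set up the fundamental group comparison. Write $U = X_{\rm reg} \setminus \operatorname{supp}(D_{\rm st})$ and $U' = X'_{\rm reg} \setminus \operatorname{supp}(D'_{\rm st})$, and let $U_0$ be the common open subset where $\phi$ is an isomorphism, intersected with both regular loci minus the boundaries. Because the complements $X \setminus U_0$ inside $X_{\rm reg}\setminus\operatorname{supp}(D_{\rm st})$ (and similarly on the $X'$ side) have codimension at least $2$ --- here one uses that $X$, being klt, is normal and $X_{\rm reg}$ has complement of codimension $\geq 2$, and that the flipped/contracted loci that are \emph{not} already part of the boundary or singular locus contribute only codimension $\geq 2$ pieces --- the inclusion $U_0 \hookrightarrow U$ induces a surjection on $\pi_1$, and likewise $U_0 \hookrightarrow U'$ induces a surjection. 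The one place to be careful is the divisorial contraction: the exceptional divisor $E \subset X$ of $X \to X'$, together with $E \setminus Z$, could meet $U$, but removing a divisor can only \emph{add} generators to $\pi_1$ and still yields a surjection $\pi_1(U \setminus E) \twoheadrightarrow \pi_1(U)$; meanwhile on the $X'$ side the image of $E$ is a center of codimension $\geq 2$. Chasing these surjections through the identification $U_0 \cong$ (an open subset of $U$) $\cong$ (an open subset of $U'$), one gets $\pi_1(U_0) \twoheadrightarrow \pi_1(U')$ on one side and $\pi_1(U_0) \twoheadrightarrow \pi_1(U)$ on the other; composing the first with a section-free diagram chase and passing to the orbifold quotients by the normal subgroups generated by the $\gamma_i^{n_i}$ (which correspond under $\phi_*$), one obtains the desired surjection $\pi_1^{\rm reg}(X',D',{\bf M}) \twoheadrightarrow \pi_1^{\rm reg}(X,D,{\bf M})$.

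The main obstacle I anticipate is the bookkeeping in the divisorial case, specifically making sure that the extra loops around the contracted divisor $E$ in $\pi_1^{\rm reg}(X,D,{\bf M})$ (or its finite-order images) do not obstruct surjectivity from the $X'$ side --- this is exactly the subtlety handled in Lemma~\ref{lem-dlt-group}, where one uses that the regional fundamental group $\pi_1^{\rm reg}(X,D;C)$ at an exceptional center $C$ is finite by~\cite[Theorem~1]{Bra21}. So rather than argue from scratch, I would reduce to the already-proven statements: apply Lemma~\ref{lem-bir-map-group} (or directly Lemma~\ref{lem-dlt-group}) with $B$ chosen so that $(X,B,{\bf M})$ is generalized Calabi-Yau and $E$ appears as a generalized log canonical place --- possible since $\phi$ is a step of an MMP on a Calabi-Yau type pair, so the contracted locus is extracted by a suitable crepant model --- thereby routing the divisorial case through the surjection already established there, and treating the flip case by the codimension-$2$ argument above (where no boundary is created or destroyed, so the orbifold quotients match verbatim). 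The only genuinely new input needed is the elementary topological fact that deleting a closed subset of codimension $\geq 2$ from a normal (hence, locally irreducible away from a codimension-$\geq 2$ set) variety does not change $\pi_1$, and deleting one of codimension $1$ can only make it larger.
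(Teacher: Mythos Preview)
The paper does not actually prove this lemma; it simply records that the statement is contained in the proof of \cite[Theorem~3.3]{BFMS20}, adapted to generalized pairs. Your first two paragraphs --- the codimension argument --- are a correct self-contained proof and are presumably what that reference does. For a flip both indeterminacy loci have codimension $\geq 2$, so $\pi_1(U')\cong\pi_1(U_0)\cong\pi_1(U)$ and the orbifold relations match verbatim. For a divisorial contraction $X\to X'$ with exceptional divisor $E$, one has $U_0\cong U'\setminus Z'$ with $\mathrm{codim}\,Z'\geq 2$, hence $\pi_1(U_0)\cong\pi_1(U')$, while the inclusion $U_0=U\setminus E\hookrightarrow U$ induces a surjection $\pi_1(U_0)\twoheadrightarrow\pi_1(U)$; composing gives the surjection, and the orbifold relations on the $X'$ side map into those on the $X$ side (the only possible discrepancy is an extra relation $\gamma_E^{m_E}$ on the $X$ side if $E\subset\mathrm{supp}(D_{\rm st})$, which is harmless for surjectivity).

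Your third paragraph, however, introduces a phantom obstacle and a fix that does not work. There is no obstruction to surjectivity coming from loops around $E$: any class in $\pi_1(U)$ is already hit by $\pi_1(U\setminus E)$, precisely because removing a divisor only \emph{enlarges} $\pi_1$. More seriously, the proposed reduction to Lemma~\ref{lem-dlt-group} or Lemma~\ref{lem-bir-map-group} goes in the wrong direction. Those lemmas take a model $Y\to X$ that \emph{extracts} divisors and produce a surjection $\pi_1^{\rm reg}(Y,\cdot)\twoheadrightarrow\pi_1^{\rm reg}(X,\cdot)$; applied with $Y=X$ and ``$X$''$=X'$ they would yield $\pi_1^{\rm reg}(X,\cdot)\twoheadrightarrow\pi_1^{\rm reg}(X',\cdot)$, the opposite of what you want. (That is indeed the harder direction, and it is why \cite[Theorem~1]{Bra21} is invoked there --- but it is not the direction of Lemma~\ref{lem-MMP-group}.) In addition, your reduction assumes the existence of a Calabi--Yau completion $(X,B,{\bf M})$ in which $E$ is an lc place, which is not part of the hypotheses of the lemma. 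Drop the last paragraph and your argument is complete.
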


\begin{lemma}\label{lem-Fano-extraction}
Let $(F,P_F, {\bf M})$ be a generalized projective klt Fano pair and $(F,B_F, {\bf M})$ be a generalized Calabi-Yau pair.
Let $\phi:F_0 \rightarrow F$ be a morphism only extracting generalized log canonical places of $(F,B_F, {\bf M})$. Define $R_{F_0}=\phi^{-1}_*(P_F)+E,$ where $E_{\geq 0}$ is a $\phi$-exceptional divisor with coefficients less than 1. Then $\pi_1^{\rm reg}(F_0,R_{F_0}, {\bf M})$ is finite.
\end{lemma}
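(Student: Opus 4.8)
The plan is to reduce the finiteness of $\pi_1^{\rm reg}(F_0,R_{F_0},{\bf M})$ to the weakly-Fano statement of Theorem~\ref{thm-Bra21-gen-pair}, by constructing a suitable generalized klt structure on $F_0$ whose anticanonical divisor is big and nef and whose boundary dominates $R_{F_0}$. First I would record the set-up: since $\phi\colon F_0\to F$ only extracts generalized log canonical places of $(F,B_F,{\bf M})$, writing $\phi^*(K_F+B_F+{\bf M}_F)=K_{F_0}+B_{F_0}+{\bf M}_{F_0}$ we have $B_{F_0}\ge 0$ with $\lfloor B_{F_0}\rfloor\supseteq E$, and $(F_0,B_{F_0},{\bf M})$ is again generalized log Calabi-Yau. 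Likewise, writing $\phi^*(K_F+P_F+{\bf M}_F)=K_{F_0}+P_{F_0}+{\bf M}_{F_0}$, the divisor $-(K_{F_0}+P_{F_0}+{\bf M}_{F_0})=\phi^*(-(K_F+P_F+{\bf M}_F))$ is the pullback of an ample divisor, hence big and nef; and $(F_0,P_{F_0},{\bf M})$ is sub-klt, but $P_{F_0}$ need not be effective since $\phi$ may extract log canonical places of $(F,B_F,{\bf M})$ that are not log canonical for $(F,P_F,{\bf M})$.

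The key step is to interpolate: for a parameter $t\in(0,1)$ consider $\Theta_t := t\,B_{F_0}+(1-t)\,P_{F_0}$, so that $-(K_{F_0}+\Theta_t+{\bf M}_{F_0}) \sim_{\mathbb{Q}} (1-t)\,\phi^*(-(K_F+P_F+{\bf M}_F))$ is still big and nef. Since $(F,P_F,{\bf M})$ is generalized klt, for $t$ small enough $a_G(F,t\,B_F+(1-t)P_F,{\bf M})>0$ for the finitely many divisors $G$ extracted by $\phi$, hence $\Theta_t$ is effective (with coefficients $<1$ along the exceptional locus) and $(F_0,\Theta_t,{\bf M})$ is generalized klt. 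Moreover, along every component of $\phi^{-1}_*(P_F)$ the coefficient of $\Theta_t$ is $t\,\mathrm{coeff}(B_{F_0})+(1-t)\,\mathrm{coeff}(P_{F_0})\ge (1-t)\,\mathrm{coeff}(P_F)$, and by choosing $t$ small we can arrange the coefficients of $\Theta_t$ to dominate those of $R_{F_0}=\phi^{-1}_*(P_F)+E$ on the strict transform part (using $B_{F_0}\ge P_{F_0}$ so that increasing $t$ only helps on the exceptional part, where $E$ has coefficients $<1\le\mathrm{coeff}(B_{F_0})$). If a direct comparison $\Theta_t\ge R_{F_0}$ fails because $\mathrm{coeff}(B_{F_0})$ along some exceptional divisor equals $1$ while $E$ has coefficient strictly less, one instead applies Lemma~\ref{lem-big-klt} to $(F_0,R_{F_0},{\bf M})$ with a small ample perturbation: since $-(K_{F_0}+R_{F_0}+{\bf M}_{F_0})$ is big and nef (it differs from $\phi^*(-(K_F+P_F+{\bf M}_F))$ by an effective exceptional divisor plus the defect between $E$ and $P_{F_0}$, which we can absorb) one produces an honest klt pair $(F_0,\Delta)$ with $\Delta\ge R_{F_0}$ and $-(K_{F_0}+\Delta)$ big and nef.

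With such a generalized klt pair in hand whose anticanonical class is big and nef and whose boundary dominates $R_{F_0}$ (in particular dominates the reduced part of $R_{F_0}$ and its standard approximation), Theorem~\ref{thm-Bra21-gen-pair} applies directly and yields that $\pi_1^{\rm reg}(F_0,\Gamma)$ is finite for any effective $\Gamma$ with standard coefficients below that boundary; taking $\Gamma=(R_{F_0})_{\rm st}$ gives finiteness of $\pi_1^{\rm reg}(F_0,R_{F_0},{\bf M})$. The main obstacle I expect is the bookkeeping of coefficients along the $\phi$-exceptional divisors: one must ensure simultaneously that the constructed boundary is (a) effective, (b) still gives a big and nef anticanonical divisor, (c) keeps the pair generalized klt, and (d) has coefficients $\ge$ those of $R_{F_0}$ (equivalently dominates its standard approximation). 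The interpolation-plus-perturbation argument above handles (a)–(c) cleanly; for (d) the only subtlety is exceptional divisors where $B_{F_0}$ has coefficient exactly $1$, and there the routing through Lemma~\ref{lem-big-klt} with an ample perturbation of $(F_0,R_{F_0},{\bf M})$ circumvents the issue, since $R_{F_0}$ is by hypothesis sub-boundary with exceptional coefficients $<1$ and its own anticanonical divisor is already big and nef.
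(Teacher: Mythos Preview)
Your approach is exactly the paper's: interpolate $\Theta_t = t\,B_{F_0}+(1-t)\,P_{F_0}$, verify that the resulting generalized pair is klt with big and nef anticanonical class, and invoke Theorem~\ref{thm-Bra21-gen-pair}. The paper writes the same interpolation as $(1-\alpha)B_{F_0}+\alpha P_{F_0}$ and lets $\alpha\to 0$, i.e.\ moves the boundary toward $B_{F_0}$.

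The gap is that you have the limit reversed. You argue that for $t$ \emph{small} the log discrepancies at the extracted divisors satisfy $a_G>0$ and conclude ``hence $\Theta_t$ is effective''. But $a_G>0$ only says the coefficient of $G$ in $\Theta_t$ is $<1$, not $\geq 0$. For $t$ small, $\Theta_t$ is close to $P_{F_0}$, whose exceptional coefficients can be negative (the extracted divisors are log canonical places of $(F,B_F,{\bf M})$, with no control from the klt pair $(F,P_F,{\bf M})$), so $\Theta_t$ need not be effective and certainly does not dominate $E$. The correct choice is $t$ close to $1$: then the exceptional coefficients of $\Theta_t$ are close to $1$, hence effective and $\geq$ the coefficients of $E$ (which are $<1$ by hypothesis); the pair $(F_0,\Theta_t,{\bf M})$ is gklt for \emph{every} $t\in(0,1)$ by convexity of log discrepancies between an lc and a klt structure; and $-(K_{F_0}+\Theta_t+{\bf M}_{F_0})\sim_{\mathbb{Q}}(1-t)\,\phi^*(-(K_F+P_F+{\bf M}_F))$ is still big and nef. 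With this single correction the argument goes through exactly as in the paper, and your fallback via Lemma~\ref{lem-big-klt} is unnecessary. That fallback is in any case not justified as written: the assertion that $-(K_{F_0}+R_{F_0}+{\bf M}_{F_0})$ is big and nef does not follow, since subtracting an effective $\phi$-exceptional divisor from the pullback of an ample divisor generally destroys nefness.
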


\begin{proof}
    Let $B_{F_0}$ and $P_{F_0}$ be defined by the  equalities:
    $$\phi^{*}(K_F+B_F+{\bf M}_F)=K_{F_0}+B_{F_0}+{\bf M}_{F_0}\text{\ \ \  and \ \ } \phi^{*}(K_F+P_F+{\bf M}_{F})=K_{F_0}+P_{F_0}+{\bf M}_{F_0}.$$

For any $0<\alpha<1$, the divisor $-K_{F_0}-(1-\alpha)B_{F_0}-\alpha P_{F_0}-{\bf M}_{F_0}$ is big and nef. Indeed $$-K_{F_0}-(1-\alpha)B_{F_0}-\alpha P_{F_0}-{\bf M}_{F_0}=-(\alpha(K_{F_0}+B_{F_0}+{\bf M}_{F_0})+(1-\alpha)(K_{F_0}+P_{F_0}+{\bf M}_{F_0})).$$

The  sub-pair $(F_{0},P_{F_0}, {\bf M})$ is generalized sub-klt. Since $\phi$ only extracts glc places of $(F,B_F, {\bf M})$, the pair $(F_0,B_{F_0}, {\bf M})$ is generalized log canonical with $B_{F_0}\geq P_{F_0}$. Hence for $\alpha$ close enough to $0$, the pair $(F_{0},(1-\alpha)B_{F_0}+\alpha P_{F_0}, {\bf M})$ is a gklt weakly Fano pair with $(1-\alpha)B_{F_0}+\alpha P_{F_0}\geq R_{F_0}$. Therefore by Theorem~\ref{thm-Bra21-gen-pair}, $\pi_1^{\rm reg}(F_0,R_{F_0}, {\bf M})$ is finite.

\end{proof}

The following lemma is an enhancement of \cite[Lemma 3.13]{FM23}
\begin{lemma}\label{lem-fibration}
Let $(Z,P)$ be a klt pair, with $\psi:Z\rightarrow W$ a Mori fiber space.
Let $P_\text{st}$ be the standard approximation of $P$. For any prime divisor $E$ on $W$, define $n_{E}$ by $\text{coeff}_{\psi^*(E)}(P_\text{st})=1-\frac{1}{n_E}$, let
$m_{E}$ be the multiplicity of the fiber at $E$. Let $$P_W:=\sum_E \left(1-\frac{1}{m_{E}n_{E}}\right)E.$$

Then we have the following exact sequence:
$$\pi_1^{\rm reg}(F,P\big|_F)\rightarrow \pi_1^{\rm reg}(Z,P)\rightarrow \pi_1^{\rm reg}(W,P_W)\rightarrow 1,$$
where $F$ is a general fiber of $\psi$.
\end{lemma}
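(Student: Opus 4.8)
The plan is to reduce the statement to the known homotopy exact sequence for fibrations of smooth (or orbifold) spaces and then carefully account for the orbifold structure coming from $P$ and from the multiple fibers of $\psi$. The first step is to pass to a sufficiently large open locus: let $W^\circ\subseteq W$ be a smooth open subset over which $\psi$ is well-behaved, and $Z^\circ=\psi^{-1}(W^\circ)$, chosen so that the fibration restricted to $Z^\circ$ is a topologically locally trivial fibration away from the discriminant, and so that $Z^\circ$, $W^\circ$ avoid the singular loci of $Z$, $W$ and all the relevant non-snc behaviour. Since the complements $Z\setminus Z^\circ$ and $W\setminus W^\circ$ have complex codimension $\geq 1$ (indeed we can arrange codimension $\geq 2$ in the base away from the divisors $E$ we want to keep track of, and similarly upstairs), removing them does not change the relevant fundamental groups by the standard transversality/codimension argument (a loop and a homotopy can be pushed off a codimension $\geq 2$ set). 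The subtlety is that the divisors $E\subseteq W$ and their preimages $\psi^*(E)\subseteq Z$ must be retained, so one works on $W^\circ$ minus $\mathrm{supp}(P_W)$ and $Z^\circ$ minus the corresponding divisors.

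**The core exact sequence and the two sources of orbifold relations.**

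Over the locus where $\psi$ is a genuine fibre bundle (base minus discriminant minus $\mathrm{supp}(P_W)$, total space minus the horizontal and vertical parts of $\mathrm{supp}(P)$), the ordinary long exact homotopy sequence of a fibration gives
$$\pi_1(F^\circ)\rightarrow \pi_1(Z^{\circ\circ})\rightarrow \pi_1(W^{\circ\circ})\rightarrow 1,$$
where $F^\circ$ is the general fibre minus $P|_F$. Now I would introduce the orbifold relations on all three groups. On the base, a loop $\gamma_E$ around a component $E$ of the discriminant, raised to the power $m_E$, lifts to a loop that is freely homotopic (in $Z$) into a smooth fibre, because the local monodromy of the fibration around $E$ has order dividing the fibre multiplicity $m_E$; this is the standard computation for multiple fibres (e.g. of a relatively minimal fibration, as in the surface case) and it is exactly why the coefficient $1-\frac1{m_E n_E}$ appears: combining the multiple-fibre relation ($m_E$-th power becomes a fibre loop) with the orbifold relation already imposed on $Z$ at $\psi^*(E)$ (which kills $n_E$-th powers of the loop around $\psi^*(E)$, and a fibre loop around $\psi^*(E)$ maps to $\gamma_E^{?}$ under $\psi$ — one must check the multiplicity bookkeeping here) yields that $\gamma_E^{m_E n_E}$ dies in the quotient. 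The horizontal components of $P$ (those dominating $W$) restrict to $P|_F$ on the general fibre and contribute only to the relations defining $\pi_1^{\rm reg}(F,P|_F)$, so they are absorbed into the left-hand term. Passing to the appropriate quotients of the three groups by their orbifold normal subgroups, right-exactness is preserved because we are quotienting compatibly.

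**The main obstacle.**

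The delicate point — and where I expect to spend the most care — is the precise monodromy/multiplicity bookkeeping at a divisor $E\subset W$ over which the fibre of $\psi$ is multiple with multiplicity $m_E$: one must show that the image in $\pi_1^{\rm reg}(Z,P)$ of the normal subgroup defining $\pi_1^{\rm reg}(W,P_W)$ is precisely the normal subgroup one needs, i.e. that $\gamma_E^{m_E n_E}$ (and no smaller power, as far as the surjection is concerned) becomes trivial, and that nothing else is forced. This requires a local analytic model of $\psi$ near a general point of $\psi^*(E)$: locally $Z\to W$ looks like $(u,v)\mapsto (u^{m_E}, v)$ up to finite quasi-étale covers and the klt structure, so a small loop in the base around $E$ pulls back to an $m_E$-fold cover of a small loop around $\psi^*(E)$ in the total space; combined with $P_{\rm st}$ having coefficient $1-\frac1{n_E}$ along $\psi^*(E)$, the loop around $\psi^*(E)$ has order $n_E$ in $\pi_1^{\rm reg}(Z,P)$, hence $\gamma_E$ has order dividing $m_E n_E$. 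I would handle this by reducing, via the codimension argument, to this explicit local picture at a general point of each $E$, citing \cite[Lemma 3.13]{FM23} for the structure of the argument and then upgrading it to incorporate the standard-coefficient boundary $P$ exactly as the statement demands. The remaining verifications — surjectivity onto $\pi_1^{\rm reg}(W,P_W)$ (clear, as $\psi$ is surjective with connected fibres so $\pi_1$ surjects, and the orbifold relations on $W$ are by construction images of relations already present), and exactness in the middle (the kernel of $\pi_1^{\rm reg}(Z,P)\to\pi_1^{\rm reg}(W,P_W)$ is generated by the image of $\pi_1$ of a fibre, again from the bundle exact sequence plus the fact that removed loci are codimension $\geq 2$) — are then routine.
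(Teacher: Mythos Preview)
Your approach is essentially the same as the paper's: start from the exact sequence of ordinary fundamental groups of the complements (the paper cites Nori's lemma for this, which packages your fibration and codimension arguments), then pass to the orbifold quotients and verify that exactness survives. The only difference is emphasis: you spend most of your effort on the local model at a multiple fibre to justify the coefficient $1-\tfrac{1}{m_E n_E}$ and call the exactness verification ``routine,'' whereas the paper takes the coefficient as the \emph{definition} of $P_W$ (so well-definedness of the map is immediate) and instead writes out the diagram chase for exactness at the middle term explicitly---lifting an element of $\ker\phi_2$ to the top row, using surjectivity of $\psi_2$ to correct by a product of conjugates of $\gamma_{\psi^{-1}(E)}^{n_E}$, and then pushing down. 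Your identification of the ``main obstacle'' is thus slightly off: the bookkeeping you worry about is absorbed into the definition, and the actual content is the chase you labelled routine.
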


\begin{proof}
By \cite[Lemma 1.5.C]{Nor83}, there is a short exact sequence:

$$\pi_1(F_\text{reg}\setminus \text{supp}(P_\text{st}|_{F_\text{reg}})) \rightarrow \pi_1(Z_\text{reg} \setminus \text{supp}(P_\text{st})) \rightarrow \pi_1 (W_\text{reg} \setminus \text{supp}(P_W)) \rightarrow 1.$$

This induces a commutative diagram: \[
\xymatrix{
\pi_1(F_\text{reg}\setminus \text{supp}(P_\text{st}|_{F_\text{reg}})) \ar[r]^-{\psi_1}
\ar[d]^-{\phi_F}
&
\pi_1(Z_\text{reg} \setminus \text{supp}(P_\text{st}))\ar[r]^-{\psi_2}
\ar[d]^-{\phi_X} 
&
\rightarrow \pi_1 (W_\text{reg} \setminus \text{supp}(P_W))
\ar[d]^-{\phi_C}\ar[r]
& 
1\\
\pi_1^{\rm reg}(F,P|_{F})\ar[r]^-{\phi_1} 
&
\pi_1^{\rm reg}(Z,P)\ar[r]^-{\phi_2}
&
\pi_1^{\rm reg}(W,P_W)\ar[r] 
&
1
}
\]

The vertical maps are surjective by the definition of the orbifold fundamental groups of pairs. 

Let $\alpha \in \pi_1^{\rm reg}(Z,P)$ be an element such that $\phi_2(\alpha)=1$. We take $\alpha'$ a preimage of $\alpha$ by $\phi_X$. Then  $\psi_2(\alpha')\in N'$, i.e. $\psi_2(\alpha')=\Pi_E \, g_E \gamma_E^{m_En_E} g_E^{-1}$, where $\gamma_E$ is a loop around $E$.

Let $\gamma_{\psi^{-1}(E)}$ be a loop around $\psi^{-1}(E)$, then there exists $h_E' \in \pi_1(Z^{\rm reg}\setminus P_{\rm st})$, such that $\psi_2(h_E'\gamma_{\psi^{-1}(E)}h_E'^{-1})=g_E \gamma_E^{m_E} g_E^{-1}$. Indeed $\psi_2(\gamma_{\psi^{-1}(E)})=g \gamma_{E}^{m_E} g^{-1}$ for some $g\in \pi_1(W_{\text{reg}}\backslash \text{supp}(P_W))$, since loops around a divisor are well-defined up to conjugation. As $\psi_2$ is surjective, there exists $h'_{E}$ mapping to $g_E g^{-1}$ in $\pi_1(W_{\text{reg}}\backslash \text{supp}(P_W)).$ 

Therefore taking $ \gamma':= \pi_E h_E'\gamma_{\psi^{-1}(E)}^{n_E}h_E'^{-1}$, we have that $\psi_2(\gamma')=\psi_2(\alpha')$. Hence, by exactness of the top row, there exists $\beta \in \psi_1(F\setminus P_{\rm st}|_F)$, such that $\psi_1(\beta')=\alpha'\gamma'^{-1}$.
By definition, $\gamma'$ is in ${\rm Ker}(\psi_X)$, thus we have: $$\psi_X(\psi_1(\beta'))=\psi_X(\alpha'\gamma'^{-1})=\alpha.$$
Therefore $\alpha=\phi_1(\psi_F(\beta'))$, and thus ${\rm Im}(\phi_1) \supseteq {\rm Ker}(\phi_2)$.

Assume $\alpha \in \pi_1^{\rm reg}(Z,P)$ is of the form $\alpha=\phi_1(\beta)$. Since the vertical morphisms are surjective, there exists $\beta'\in \pi_1(F\setminus P_{\rm st})|_F$ such that $\psi_F(\beta')=\beta$. Hence $$\phi_2(\alpha)=\psi_C\circ \psi_2\circ \psi_1(\beta')=1.$$ Thus ${\rm Im}(\phi_1) \subseteq {\rm Ker}(\phi_2)$. Similarly, we get that $\psi_2$ is surjective. Hence the bottom sequence is also exact.
\end{proof}

\section{Fundamental Groups and Coregularity}

In this section we prove the main theorems stated in the introduction, i.e. we prove the cases $c \in \{0,1,2\}$ of Conjecture~\ref{introconj-virt-abelian}, which we restate for the convenience of the reader:

\begin{conjecture}[Conjecture~\ref{introconj-virt-abelian}]
\label{conj-virt-abelian}
    Let $(X,D)$ be a projective klt pair with absolute coregularity $c$. Then $\pi_1(X,D)$ is virtually abelian of rank at most $2c$.
\end{conjecture}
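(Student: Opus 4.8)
The core of the argument is Theorem~\ref{introthm-induction-coreg}: once the statement is known for klt Calabi--Yau pairs whose dimension does not exceed their coregularity, it follows for all pairs of absolute coregularity $c$. Granting this, the cases $0\le c\le 3$ fall out of the low-dimensional theory: for $c=0$ the reduced pair is $0$-dimensional, so $\pi_1^{\rm reg}$ is trivial (and Theorem~\ref{introthm-coreg0} even records that the original pair has \emph{finite} $\pi_1^{\rm reg}$); for $c=1,2$ it is a klt Calabi--Yau curve or surface, where Proposition~\ref{prop-dim1} and Proposition~\ref{prop-dim2} give virtual abelianity of rank $\le 2$, resp. $\le 4$; and for $c=3$ it is a klt Calabi--Yau threefold, where Theorem~\ref{introthm-3dim} gives rank $\le\max(4,2\tilde{q})\le 6=2c$ since $\tilde{q}\le 3$. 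In higher coregularity the same scheme reduces Conjecture~\ref{conj-virt-abelian} to the (currently open) statement that $\pi_1^{\rm reg}$ of a $c$-dimensional klt Calabi--Yau pair is virtually abelian of rank $\le 2c$.

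\textbf{Step 1 (reduce to a gdlt log Calabi--Yau model with a klt sub-boundary).} Let $(X,D)$ be klt of absolute coregularity $c$; by definition there are $B\ge D$ and a nef b-divisor with $(X,B)$ generalized log Calabi--Yau of coregularity $c$. Pass to a $\mathbb{Q}$-factorial gdlt modification (Lemma~\ref{lem-gdlt-mod}); by Lemma~\ref{lem-bir-map-group} (which invokes Lemma~\ref{lem-dlt-group} and generalized BCHM, Lemma~\ref{lem-BCHM-generalized-pairs}) we may replace $D$ by an effective $P_Y$ with $D_Y\le P_Y\le\alpha D_Y+(1-\alpha)B_Y$ admitting a surjection $\pi_1^{\rm reg}(Y,P_Y)\twoheadrightarrow\pi_1^{\rm reg}(X,D)$. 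Since $\alpha>0$ and log discrepancies are convex, $(Y,P_Y)$ stays klt; by Lemma~\ref{lem-CY-coreg-MMP} the pair $(Y,B_Y)$ keeps coregularity $c$. Thus it suffices to bound $\pi_1^{\rm reg}(Y,P_Y)$ for a $\mathbb{Q}$-factorial gdlt log Calabi--Yau pair $(Y,B_Y)$ of coregularity $c$ carrying an auxiliary klt sub-boundary $P_Y\le B_Y$; shrinking $\alpha$, all coefficients of $P_Y$ along $\lfloor B_Y\rfloor$ are $\le 1-\varepsilon$ for some $\varepsilon>0$.

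\textbf{Step 2 (peel off log Fano Mori fibre spaces).} Run the $(K_Y+B_Y-\varepsilon\lfloor B_Y\rfloor)$-MMP (equivalently, since $K_Y+B_Y\sim_{\mathbb{Q}}0$, the $(-\varepsilon\lfloor B_Y\rfloor)$-MMP); every flip and divisorial contraction gives a surjection of $\pi_1^{\rm reg}$ in the good direction (Lemma~\ref{lem-MMP-group}), the coregularity is unchanged (Lemma~\ref{lem-CY-coreg-MMP}), and $P$ stays below $B-\varepsilon\lfloor B\rfloor$, hence $(Y,P_Y)$ stays klt. If this MMP terminates with a Mori fibre space $\psi\colon Z\to W$, $\dim W<\dim Z$, then, $\psi$ being $(K_Z+B_Z-\varepsilon\lfloor B_Z\rfloor)$-negative, the general fibre is a klt log Fano pair $(F,(B_Z-\varepsilon\lfloor B_Z\rfloor)|_F)$ with $P_Z|_F$ below its boundary; hence $\pi_1^{\rm reg}(F,P_Z|_F)$ is \emph{finite} by Lemma~\ref{lem-Fano-extraction} (a consequence of Theorem~\ref{thm-Bra21-gen-pair}). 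The exact sequence of Lemma~\ref{lem-fibration} together with Lemma~\ref{lem-virtual-groups} then reduces the problem to $\pi_1^{\rm reg}(W,P_W)$, where the generalized canonical bundle formula (Proposition~\ref{lem-canonical-bundle-formula}, compatibly with gdlt modifications via Lemma~\ref{lem-dlt-mod-Fano-fibration}) equips $W$ with a generalized log Calabi--Yau structure of dimension $<\dim Z$ and coregularity $\le c$, together with an auxiliary klt sub-boundary $P_W$ computed as in Lemma~\ref{lem-fibration} --- here it is essential to use coregularity in the flexible sense of the present paper, so that it does not increase under the canonical bundle formula. Iterating, the dimension strictly drops, so after finitely many rounds either the base is a point ($\pi_1^{\rm reg}$ finite, done with rank $0$) or, applying Step 1 once more, we land on a klt Calabi--Yau pair of dimension $\le c$, for which the hypothesis of Theorem~\ref{introthm-induction-coreg} --- supplied for $c\le 3$ in every dimension $\le 3$ by Proposition~\ref{prop-dim1}, Proposition~\ref{prop-dim2}, and Theorem~\ref{introthm-3dim} --- yields virtual abelianity of rank $\le 2c$ (the non-finite part being the $\mathbb{Z}^{2\tilde{q}}$ split off by Theorem~\ref{introthm-torus-covers}, with $\tilde{q}\le\dim\le c$). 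Lemma~\ref{lem-virtual-groups} then propagates this bound up the whole tower.

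\textbf{Main obstacle.} The crux is the compatibility bookkeeping in Step 2: one must choose the MMP and the auxiliary boundary $P$ so that, in every round, the general fibres are genuinely klt log Fano --- this, via Lemma~\ref{lem-Fano-extraction}, is exactly what keeps their $\pi_1^{\rm reg}$ finite, hence (by Lemma~\ref{lem-virtual-groups}) leaves the rank untouched --- while the bases retain coregularity $\le c$ and contribute nothing new to the rank. Checking that the nef b-divisor produced by each canonical bundle formula does not spoil klt-ness or raise the coregularity, and that gdlt modifications downstairs lift to Fano-type fibrations upstairs (Lemma~\ref{lem-dlt-mod-Fano-fibration}), is the technical heart; the group theory (Lemma~\ref{lem-virtual-groups}) is then purely formal, and the entire rank $\le 2c$ is manufactured at the bottom of the tower, in dimension $\le c$. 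Consequently the argument is complete for $c\le 3$ precisely because the needed base case --- virtual abelianity of $\pi_1^{\rm reg}$ of $c$-dimensional klt Calabi--Yau pairs --- is available in dimension $\le 3$, and it is this base case alone that is missing for $c\ge 4$.
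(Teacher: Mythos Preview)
Your strategy is essentially the paper's: reduce, via the proof of Theorem~\ref{thm-induction-coreg}, to $c$-dimensional klt Calabi--Yau pairs by passing to a gdlt modification, running an MMP to a Mori fibre space, applying the canonical bundle formula to the base (with the gdlt modification of the base lifted upstairs via Lemma~\ref{lem-dlt-mod-Fano-fibration}), and inducting on dimension, then invoke Propositions~\ref{prop-dim1}, \ref{prop-dim2}, and Theorem~\ref{introthm-3dim} for $c\le 3$. The only difference is cosmetic --- the paper runs a $(K_Y+P_Y)$-MMP rather than your $(K_Y+B_Y-\varepsilon\lfloor B_Y\rfloor)$-MMP --- but both terminate on a Mori fibre space whose general fibre is klt log Fano for the auxiliary boundary, and the subsequent bookkeeping (a second application of Lemma~\ref{lem-bir-map-group} upstairs, Lemma~\ref{lem-fibration} for the exact sequence, Lemma~\ref{lem-Fano-extraction} for finiteness of the fibre group, and Lemma~\ref{lem-virtual-groups} to conclude) is identical.
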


We also restate a slight enhancement of Theorem~\ref{introthm-induction-coreg} - note that we only require pairs with \emph{standard coefficients}:

\begin{theorem}
\label{thm-induction-coreg}
    Assume  Conjecture~\ref{conj-virt-abelian} holds for klt Calabi-Yau pairs with standard coefficients of absolute coregularity $c$ and dimension $c$. Then Conjecture~\ref{conj-virt-abelian} holds for any pair of absolute coregularity $c$.
\end{theorem}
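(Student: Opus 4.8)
The plan is to prove, by induction on $\dim X$, the statement of Conjecture~\ref{conj-virt-abelian} for all projective klt pairs of absolute coregularity $c$, using the assumed case of $c$-dimensional klt Calabi--Yau pairs with standard coefficients as the base of the induction. First I would reduce to a convenient model. Since $\pi_1^{\rm reg}(X,D)$ depends only on the standard approximation $D_{\rm st}$, we may assume $D$ has standard coefficients. Choose a generalized log Calabi--Yau pair $(X,B,{\bf N})$ with $B\geq D$ realizing ${\rm \hat{coreg}}(X,D)=c$, take a $\mathbb{Q}$-factorial gdlt modification $(Y,B_Y,{\bf N})\to(X,B,{\bf N})$ (Lemma~\ref{lem-gdlt-mod}), and apply Lemma~\ref{lem-bir-map-group}: this produces an effective divisor $P_Y$ with $D_Y\leq P_Y\leq\alpha D_Y+(1-\alpha)B_Y$ for small $\alpha=1/m$, a surjection $\pi_1^{\rm reg}(Y,P_Y,{\bf N})\twoheadrightarrow\pi_1^{\rm reg}(X,D)$, and $(Y,P_Y,{\bf N})$ generalized klt with $-(K_Y+P_Y+{\bf N}_Y)\sim_{\mathbb{Q}}B_Y-P_Y\geq 0$. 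It therefore suffices to bound $\pi_1^{\rm reg}(Y,P_Y,{\bf N})$, and by construction $(Y,B_Y,{\bf N})$ is a gdlt log Calabi--Yau pair of coregularity $c$ dominating it.

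For the inductive step assume $\dim Y>c$. Then $B_Y\neq P_Y$: otherwise $(Y,P_Y,{\bf N})=(Y,B_Y,{\bf N})$ would be generalized klt while carrying a generalized non-klt place, since $\mathrm{coreg}<\dim$, a contradiction. Hence $-(K_Y+P_Y+{\bf N}_Y)$ is a nonzero effective $\mathbb{Q}$-divisor, so $K_Y+P_Y+{\bf N}_Y$ is not pseudo-effective, and a $(K_Y+P_Y+{\bf N}_Y)$-MMP terminates with a Mori fiber space $\psi\colon Y'\to W$. Each step induces a surjection of regular orbifold fundamental groups (Lemma~\ref{lem-MMP-group}) and preserves the Calabi--Yau structure and its coregularity (Lemma~\ref{lem-CY-coreg-MMP}), so we may assume $Y=Y'$ carries $\psi$. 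Lemma~\ref{lem-fibration}, in its generalized form, then gives an exact sequence
$$\pi_1^{\rm reg}(F,P_Y|_F,{\bf N})\longrightarrow\pi_1^{\rm reg}(Y,P_Y,{\bf N})\longrightarrow\pi_1^{\rm reg}(W,P_W)\longrightarrow 1,$$
where $F$ is a general fiber. Since $-(K_F+P_Y|_F+{\bf N}_F)$ is ample, $(F,P_Y|_F,{\bf N})$ is generalized klt weakly Fano, so $\pi_1^{\rm reg}(F,P_Y|_F,{\bf N})$ is finite by Theorem~\ref{thm-Bra21-gen-pair}. Using the generalized canonical bundle formula (Proposition~\ref{lem-canonical-bundle-formula}) together with Lemma~\ref{lem-image-lcc}, one identifies $(W,P_W)$ as a projective klt pair of absolute coregularity $c$ and of dimension strictly smaller than $\dim X$; the induction hypothesis then shows $\pi_1^{\rm reg}(W,P_W)$ is virtually abelian of rank at most $2c$, and Lemma~\ref{lem-virtual-groups} upgrades this to $\pi_1^{\rm reg}(Y,P_Y,{\bf N})$, hence to $\pi_1^{\rm reg}(X,D)$.

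It remains to treat the base case $\dim Y=c$. Here $(Y,B_Y,{\bf N})$ is a generalized klt log Calabi--Yau pair of dimension $c$ with $B_Y\geq(P_Y)_{\rm st}$ and $\pi_1^{\rm reg}(Y,(P_Y)_{\rm st},{\bf N})=\pi_1^{\rm reg}(Y,P_Y,{\bf N})\twoheadrightarrow\pi_1^{\rm reg}(X,D)$. What is left is to produce from $(Y,B_Y,{\bf N})$ an honest klt Calabi--Yau pair $(Y,\Delta)$ with \emph{standard} coefficients that still surjects onto $\pi_1^{\rm reg}(X,D)$: one moves the nef b-divisor ${\bf N}$ into the boundary after passing to a model on which it descends and is semiample (or trivializes it by a finite quasi-\'etale cover, which only replaces the fundamental group by a finite-index subgroup), and then passes to standard approximations while keeping track of the loops around the modified components. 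Applying the assumed case of the conjecture to this pair concludes the proof.

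The two technical cores are: (i) in the inductive step, showing that the orbifold base $(W,P_W)$ produced by Lemma~\ref{lem-fibration} really has absolute coregularity equal to $c$ — this requires matching $P_W$ against the boundary coming from the canonical bundle formula and checking, via Lemma~\ref{lem-image-lcc} and the structure of Mori fiber spaces, that the Fano directions of $\psi$ do not lower the coregularity (in particular $\dim W\geq c$), so that the induction hypothesis applies; and (ii) in the base case, carrying out the reduction from a generalized to an honest klt Calabi--Yau pair with standard coefficients without destroying the surjection onto $\pi_1^{\rm reg}(X,D)$, which is exactly the point at which the hypothesis being phrased for pairs with standard coefficients is used. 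I expect (i) to be the main obstacle.
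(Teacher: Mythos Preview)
Your overall architecture---induction on dimension, reduction to standard coefficients, gdlt modification via Lemma~\ref{lem-dlt-group}/\ref{lem-bir-map-group}, MMP to a Mori fiber space, fibration exact sequence, and Lemma~\ref{lem-virtual-groups}---is exactly the paper's. The divergence, and the real gap, is at the fibration step. You apply Lemma~\ref{lem-fibration} directly to the Mori fiber space $\psi\colon Y'\to W$ and then try to argue that the orbifold base $(W,P_W)$ is a klt pair of absolute coregularity~$c$. The paper does \emph{not} do this: the pair $(W,B_W,{\bf N})$ coming from the canonical bundle formula is only generalized lc, and there is no direct comparison between the ``fiber-multiplicity'' boundary $P_W$ of Lemma~\ref{lem-fibration} and $B_W$. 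Instead, the paper first takes a $\mathbb{Q}$-factorial gdlt modification $W'\to W$ of $(W,B_W,{\bf N})$, lifts it to a Fano-type fibration $Z'\to W'$ using Lemma~\ref{lem-dlt-mod-Fano-fibration}, applies Lemma~\ref{lem-bir-map-group} a second time on $Z'$ to obtain a boundary $R_{Z'}$ with $\pi_1^{\rm reg}(Z',R_{Z'})\twoheadrightarrow\pi_1^{\rm reg}(Z,P_Z)$, and only then invokes Lemma~\ref{lem-fibration} for $Z'\to W'$. This second pass is what forces $R_{W'}\leq B_{W'}$ with coefficients $<1$, so that $(W',R_{W'},{\bf N})$ is genuinely gklt of absolute coregularity $\leq c$ (the bound coming from Lemma~\ref{lem-image-lcc} applied on the gdlt model). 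The fiber $F'$ is no longer Fano on the nose---it is $F$ with some lc places of $(F,B_F)$ extracted---so Lemma~\ref{lem-Fano-extraction} is needed in place of a direct appeal to Theorem~\ref{thm-Bra21-gen-pair}. Your ``technical core (i)'' is precisely this missing package.

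Your base case is also organized differently and is incomplete as written. You split on $\dim Y>c$ versus $\dim Y=c$ and, in the latter case, are left with a \emph{generalized} klt Calabi--Yau pair $(Y,B_Y,{\bf N})$ that you must convert into an honest klt Calabi--Yau pair with standard coefficients; the sketch (``descend ${\bf N}$ or kill it by a quasi-\'etale cover'') does not obviously preserve the surjection onto $\pi_1^{\rm reg}(X,D)$. The paper sidesteps this entirely by splitting instead on whether $(X,D)$ itself is Calabi--Yau: if so, $\dim X=c$ and the hypothesis applies verbatim; if not, the MMP runs regardless of dimension (the paper runs a $(K_Y+P_Y)$-MMP, without the moduli part, so that the output is an honest klt pair and Lemma~\ref{lem-fibration} applies as stated). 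Adopting this dichotomy removes your base-case difficulty completely.
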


\begin{proof}
We proceed by induction on the dimension, assume that $(X,D)$ is a projective klt pair of absolute coregularity $c$, with $\dim X=d$, and that Conjecture~\ref{conj-virt-abelian} holds for $\dim X < d$. 

As $\pi_1^{\rm reg}(X,D)=\pi_1^{\rm reg}(X,D_\text{st})$ and $\hat{\text{coreg}}(X,D_\text{st})\leq \hat{\text{coreg}}(X,D)$ it is enough to prove the statement for pairs with standard coefficients. Thus, we may assume that $D=D_{st}$
\\

\noindent
\textit{Case 1:} In this case, we prove the statement when $(X,D)$ is a Calabi-Yau pair. \\

If $(X,D)$ is a Calabi-Yau pair, then the dimension of $X$ is $c$ as $(X,D)$ is a klt pair. Since $D$ has standard coefficients, the group $\pi_1^{\rm reg}(X,D)$ is finite by the statement of the theorem.\\

\noindent
\textit{Case 2:} In this case, we prove the statement when $(X,D)$ is not Calabi-Yau. \\

If $(X,D)$ is not Calabi-Yau, then there exists a generalized Calabi-Yau pair $(X,B, {\bf M})$ of coregularity $c$ with  $B \geq D$  and $B-D+{\bf M}_X \not\equiv 0$.

Let $\phi: (Y,B_Y,{\bf M}) \rightarrow (X,B,{\bf M})$ be a $\mathbb{Q}$-factorial gdlt modification. Let $\phi^*(K_X+D+{\bf M}_X)=K_Y+D_Y+{\bf M}_Y$.
Then by Lemma~\ref{lem-dlt-group}, we can choose $P_Y$, such that $\pi_1^{\rm reg}(Y,P_Y, {\bf M} ) \twoheadrightarrow \pi_1^{\rm reg}(X,D, {\bf M})$.

We have that $(Y,B_Y, {\bf M})$ is generalized Calabi-Yau of coregularity $c$ and that $(Y,D_Y,{\bf M})$ is generalized sub-klt.
As by Lemma~\ref{lem-dlt-group}, $D_Y\leq P_Y \leq \alpha D_Y+(1-\alpha)(B_Y)$ for appropriate $\alpha$. For $\alpha$ small enough, $(Y,P_Y,{\bf M})$ is generalized klt of Calabi-Yau type with absolute coregularity at most $c$. In fact it has absolute coregularity exactly $c$, as $P_Y\geq D_Y$ and Lemma~\ref{lem-coreg-bir-morphism} imply that $${\rm \hat{coreg}}(Y,P_Y, {\bf M})\geq {\rm \hat{coreg}}(Y,D_Y, {\bf M})\geq {\rm \hat{coreg}}(X,D, {\bf M})=c.$$

Since $K_Y+P_Y \sim -\alpha B_Y+\alpha D_Y -{\bf M}_Y$ is not pseudo-effective, we can run a $(K_Y+P_Y)$-MMP with scaling of an ample divisor  that ends on a Mori fiber space $f:Z \rightarrow W$.  If $W$ is a point, then $Z$ is of Fano type of Picard number $1$ and has a finite regional fundamental group by~\cite{Bra21}.

So assume $\mathrm{dim}\, W > 0$. By monotonicity of log discrepancies and Lemma~\ref{lem-CY-coreg-MMP}, the generalized pair $(Z,P_Z, {\bf M})$ is again glc of absolute coregularity at most $c$.

By Lemma~\ref{lem-MMP-group}, we have  $\pi_1^{\rm reg}(Z,P_Z, {\bf M})\twoheadrightarrow \pi_1^{\rm reg}(Y,P_Y, {\bf M})$, where $\varphi:Y \dashrightarrow Z$ induces $\varphi_*(P_Y)=P_Z$ and $\varphi_*(B_Y)=B_Z$.

Let $(W,B_W, {\bf N})$ be the generalized pair induced by the canonical bundle formula \ref{lem-canonical-bundle-formula}, i.e. $$f^*(K_W+B_W+{\bf N}_W) \sim_{\mathbb{Q}} K_Z+B_Z +{\bf M}_Z.$$

Let $(W',B_{W'}, {\bf N})$ be a $\mathbb{Q}$-factorial gdlt modification of $(W,B_W, {\bf N})$. By Lemma~\ref{lem-dlt-mod-Fano-fibration}, we have the following commutative diagram:

$$\begin{tikzcd}
(Z,B_Z, {\bf M})\ar{d}{f} 
  & (Z',B_{Z'}, {\bf M}) \arrow[l,dashed]{}[swap]{g}\ar{d}{f'}
\\
(W,B_W, {\bf N})
  & (W',B_{W'}, {\bf N}), \arrow[l]{}
\end{tikzcd}$$
where $g$ is a birational contraction only extracting generalized log canonical places and $f'$ is of Fano type.

By construction, $(Z',B_{Z'}, {\bf M})$ has a generalized log canonical center of dimension at most $c$. By Lemma~\ref{lem-image-lcc}, its image is a glcc of $(W',B_{W'},{\bf N})$ of dimension at most $c$. Since $(W',B_{W'},{\bf N})$ is gdlt, we have $c\geq \hat{\rm coreg}(W', B_{W'},{\bf N})\geq \hat{\rm coreg}(W', B_{W'})$.

By Lemma~\ref{lem-bir-map-group}, there exists an effective divisor $R_{Z'}$, satisfying that $P_{Z'} \leq R_{Z'} \leq B_{Z'}$, where $K_{Z'}+P_{Z'}+{\bf M}_{Z'}=g^*(K_Z+P_{Z}+{\bf M}_Z)$, such that:

$$\pi_1^{\rm reg}(Z',R_{Z'},{\bf M})\twoheadrightarrow \pi_1^{\rm reg}(Z,P_Z, {\bf M}).$$

Let $R_{W'}$ be as in Lemma~\ref{lem-fibration}, applied to $(Z',R_{Z'},{\bf M})$, and $F'$ ($F$, respectively) the general fiber of $f'$ (of $f$, respectively). Consequently, we obtain the exact sequence 
$$
\pi_1^{\rm reg}(F',R_{F'}, {\bf S}) \rightarrow \pi_1^{\rm reg}(Z',R_{Z'},{\bf M}) \rightarrow \pi_1^{\rm reg}(W',R_{W'}, {\bf N}) \rightarrow 1,
$$
where we define $R_{F'}:=R_{Z'}\big |_{F'}$ and ${\bf S}={\bf M}\big |_{F'}$.
Similarly we define  $B_{F}:=B_Z \big |_{F}$ .

The divisor $R_{W'}$ satisfies $R_{W'}\leq B_{W'}$ and ${\rm coeff}(R_{W'})<1$. Since $(W',B_{W'}, {\bf N})$ is gdlt of coregularity at most $c$, the pair $(W',R_{W'}, {\bf N})$ is gklt of absolute coregularity at most $c$.
By the induction hypothesis, $\pi_1^{\rm reg}(W',R_{W'}, {\bf N})=\pi_1^{\rm reg}(W', R_{W'})$ is virtually abelian of rank at most $2c$.

By Lemma~\ref{lem-BCHM-generalized-pairs} applied to the generalized pair $(F,B_{F},{\bf S})$ and the generalized log canonical places extracted by $g\big|_{F'}:F'\dashrightarrow F$, there exists a birational morphism $p:F_{0}\rightarrow F$ extracting the same generalized lc places. Therefore the induced birational map $q:F_{0} \dashrightarrow F'$ is small. Hence 
$$
\pi_1^{\rm reg}(F',R_{F'}, {\bf S})\simeq \pi_1^{\rm reg}(F_0,R_{F_0},{\bf M}\big|_{F_0}),
$$ 
where $R_{F_0}=q^{-1}_*(R_{F'})=p^{-1}_*(R_{F})=p^{-1}_*(P_{F})+E$ satisfies the hypotheses of Lemma~\ref{lem-Fano-extraction}. Therefore $\pi_1^{\rm reg}(F',R_{F'}, {\bf S})$ is finite.

Hence, by Lemma~\ref{lem-virtual-groups}, the group $\pi_1^{\rm reg}(Z',R_{Z'}, {\bf M})$ is also virtually abelian of rank at most $2c$. 
By the surjections 
$$
\pi_1^{\rm reg}(Z',R_{Z'}, {\bf M})\twoheadrightarrow \pi_1^{\rm reg}(Z,P_Z, {\bf M}) \twoheadrightarrow\pi_1^{\rm reg}(Y,P_Y, {\bf M}) \twoheadrightarrow \pi_1^{\rm reg}(X,D, {\bf M})\simeq \pi_1^{\rm reg}(X,D),
$$ 
a virtually abelian group of rank at most $2c$ surjects onto $\pi^{\rm reg}(X,D)$. Therefore $\pi^{\rm reg}(X,D)$ is also virtually abelian of rank at most $2c$.

\end{proof}

\begin{proof}[Proof of Theorem \ref{introthm-coreg0}]
Conjecture~\ref{conj-virt-abelian} is vacuous in dimension 0 and coregularity 0. Therefore Theorem~\ref{thm-induction-coreg} implies that $\pi_1^{\rm reg}(X,D)$ is finite.
\end{proof}

\begin{proof}[Proof of Theorem \ref{introthm-coreg12}]
Proposition~\ref{prop-dim1} tells us that  Conjecture~\ref{conj-virt-abelian} holds for $1$-dimensional klt Calabi-Yau pairs of coregularity 1  with standard coefficients, while Proposition~\ref{prop-dim2} and Theorem~\ref{introthm-3dim} tell us the same for dimension and coregularity $2$ and $3$, respectively.
Therefore, Theorem~\ref{thm-induction-coreg} implies that $\pi_1^{\rm reg}(X,D)$ is virtually abelian of rank at most $2c$ for $1 \leq c \leq 3$.
\end{proof}

\section{Effective Bounds}
\label{sec:eff-virt-solv}

Theorem~\ref{introthm-coreg0} cannot be made effective, even in the case of dimension $1.$

\begin{example}{\em 
    The pair $(\mathbb{P}^1,\frac{n-1}{n}p_1+\frac{n-1}{n}p_2)$ has absolute coregularity $0$ and has fundamental group:
    $\pi_1^{\rm reg}(\mathbb{P}^1,\frac{n-1}{n}p_1+\frac{n-1}{n}p_2)\simeq \mathbb{Z}/n\mathbb{Z},$ which can be arbitrarily large. Note that though this group is abelian, it is \textit{not} the abelian subgroup `obtained' by Theorem~\ref{introthm-coreg0}, as in the case of coregularity $0$ the latter is trivial.}
\end{example}

The case of absolute coregularity $1$ and dimension $1$ yields only the groups: $\mathbb{Z}\times \mathbb{Z}$, $\mathbb{Z}/m\mathbb{Z}$, $A_4$, $A_5$ or $S_4$. Any of these groups admits an abelian subgroup of rank at most $2$ of index at most $60$. Hence there is an effective bound for Theorem~\ref{introthm-coreg12} in dimension $1$.

In general one cannot obtain an effective bound for the finite index in Theorem~\ref{introthm-coreg12} on the index of the abelian subgroups, even if we fix the dimension and the coregularity.

In \cite[Example 7.3]{GLM23}, an example of a family of klt Calabi-Yau surface with fundamental groups, whose abelian subgroups can have arbitrarily large index. Thus in coregularity $2$ an effective bound cannot be made even in dimension $2$.

But if we relax the characterization of the subgroup to be solvable, we conjecture the following:

\begin{conjecture}\label{conj-solv-bound}
    There exists a constant $i(d,c)$ depending on the dimension and coregularity, such that any $klt$ Calabi-Yau type pair $(X,D)$ of dimension $d$ and absolute coregularity $c$ has fundamental group $\pi_1^{\rm reg}(X,D)$ virtually solvable of length at most $2d-1$ and index at most $i(d,c)$.
\end{conjecture}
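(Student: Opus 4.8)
\medskip

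\noindent\emph{Plan of proof.} The plan is to repeat the dimension induction of the proof of Theorem~\ref{thm-induction-coreg}, replacing the single group-theoretic input Lemma~\ref{lem-virtual-groups} by its effective counterparts Lemma~\ref{lem-fin-sol} and Lemma~\ref{lem-virt-sol}, and propagating at every step a bound on \emph{both} the solvable length and the index of a solvable subgroup. As there, one reduces first to the case $D=D_{\mathrm{st}}$, using $\pi_1^{\rm reg}(X,D)=\pi_1^{\rm reg}(X,D_{\mathrm{st}})$, the inequality $\widehat{\mathrm{coreg}}(X,D_{\mathrm{st}})\le\widehat{\mathrm{coreg}}(X,D)$, and that the dimension is unchanged. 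If $(X,D)$ is Calabi--Yau then $d=c$, which is the base case treated below; otherwise one fixes a generalized Calabi--Yau pair $(X,B,{\bf M})$ of coregularity $c$ with $B\ge D$, takes a $\mathbb{Q}$-factorial gdlt modification, runs a $(K+P_Y)$-MMP with scaling to a Mori fibre space, and, invoking Lemmas~\ref{lem-dlt-group}, \ref{lem-bir-map-group}, \ref{lem-MMP-group}, \ref{lem-CY-coreg-MMP}, \ref{lem-image-lcc}, \ref{lem-dlt-mod-Fano-fibration} and~\ref{lem-fibration} exactly as in that proof, produces a Fano type fibration $f'\colon Z'\to W'$, a surjection $\pi_1^{\rm reg}(Z',R_{Z'})\twoheadrightarrow\pi_1^{\rm reg}(X,D)$, and an exact sequence $\pi_1^{\rm reg}(F',R_{F'})\to\pi_1^{\rm reg}(Z',R_{Z'})\to\pi_1^{\rm reg}(W',R_{W'})\to 1$, where $F'$ is a general fibre. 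Since being virtually solvable of length $\le n$ and index $\le r$ passes to quotient groups, it suffices to bound this invariant for $\pi_1^{\rm reg}(Z',R_{Z'})$.

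Here $(W',R_{W'})$ is gklt of absolute coregularity $\le c$ and dimension $\le d-1$, so by the inductive hypothesis $\pi_1^{\rm reg}(W',R_{W'})$ is virtually solvable of length $\le 2\dim W'-1$ and index $\le i(\dim W',c)$, while $\pi_1^{\rm reg}(F',R_{F'})\cong\pi_1^{\rm reg}(F_0,R_{F_0})$ is the orbifold fundamental group of a klt weakly Fano pair with standard coefficients of dimension $\le d$, hence finite by Lemma~\ref{lem-Fano-extraction}. If one grants the effective refinement that this group is virtually abelian of index at most a constant $J(d)$ depending only on $d$ (the Jordan property for fundamental groups of klt weakly Fano pairs of bounded dimension), then Lemma~\ref{lem-virt-sol} gives that $\pi_1^{\rm reg}(Z',R_{Z'})$, and hence $\pi_1^{\rm reg}(X,D)$, is virtually solvable of length at most $1+(2\dim W'-1)+1=2\dim W'+1\le 2d-1$ and of index at most $i(\dim W',c)\cdot(J(d)-1)!$, which defines $i(d,c)$ recursively. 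The degenerate case, where the base of the Mori fibre space is a point, is direct: then $Z$ is $\mathbb{Q}$-factorial of Fano type with Picard number one, $\pi_1^{\rm reg}(Z)$ is finite, and the Jordan property applies again. For the base case $d=c$ with $(X,D)$ Calabi--Yau and $D$ of standard coefficients: $c=0$ is trivial ($X$ is a point); $c=1$ is Proposition~\ref{prop-dim1}, each group on that list having an abelian subgroup of rank $\le 2$ and index $\le 60$; and $c=2$ is Proposition~\ref{prop-dim2} together with the known structure of fundamental groups of klt Calabi--Yau surfaces: although their abelian subgroups can have unbounded index by \cite[Example~7.3]{GLM23}, these groups still have a solvable subgroup of length $\le 2$ whose index is bounded in terms of the rank alone. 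For $c=3$ one would feed Theorem~\ref{introthm-3dim} into the same scheme, and for larger $c$ the analogous statement for dimension-$c$ Calabi--Yau pairs.

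The crux, and the reason the statement is only a conjecture in general, is exactly this effective input: that $\pi_1^{\rm reg}$ of a klt (weakly) Fano type pair of bounded dimension with standard coefficients has an abelian subgroup of index bounded purely in terms of the dimension, and, for $c\ge 3$, that $\pi_1^{\rm reg}$ of a klt Calabi--Yau pair of dimension $c$ has a solvable subgroup of length $\le 2c-1$ and uniformly bounded index. The factor $(|F|-1)!$ appearing in Lemma~\ref{lem-fin-sol} shows why uniformity is indispensable: a priori the orbifold relations $\gamma_i^{n_i}$ only exhibit $\pi_1^{\rm reg}(F_0,R_{F_0})$ as \emph{some} finite quotient of a fixed finitely generated group, and finite quotients of a given group can be arbitrarily far from being solvable of bounded length and index. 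In the range $0\le c\le 2$ the Fano type pairs that can occur as fibres, and the Calabi--Yau pairs that occur as base cases, have dimension and coregularity low enough that the required boundedness and Jordan-type results are available, which is what makes Theorem~\ref{introthm-solv012} unconditional; the general conjecture would follow from the Jordan property for fundamental groups of klt Fano type pairs in all dimensions, together with sufficiently strong structural control — beyond mere virtual abelianity — on fundamental groups of higher-dimensional klt Calabi--Yau pairs.
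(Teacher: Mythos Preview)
The statement is a \emph{conjecture}; the paper does not prove it in full. What the paper proves is the conditional Proposition~\ref{prop-induction-solvable} (if the conjecture holds in dimension $\le d$, it holds in absolute coregularity $\le d$ and any dimension) and the unconditional Theorem~\ref{introthm-solv012} for $c\le 2$. Your inductive scheme is precisely the paper's proof of Proposition~\ref{prop-induction-solvable}: reduce via the MMP and Lemma~\ref{lem-fibration} to an exact sequence with Fano-type fibre and lower-dimensional base, then apply Lemma~\ref{lem-virt-sol}. Your length computation $1+(2\dim W'-1)+1\le 2d-1$ and the recursive index bound match the paper's.

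However, you mislocate the obstruction. You treat the Jordan property for $\pi_1^{\rm reg}$ of the Fano-type fibre as conditional (``If one grants the effective refinement\ldots''), but this is already a theorem: the paper invokes \cite[Theorem~3]{BFMS20}, which gives in \emph{every} dimension a constant $c(d)$ such that $\pi_1^{\rm reg}(F',R_{F'},{\bf S})$ has an abelian subgroup of index at most $c(d)$. Proposition~\ref{prop-induction-solvable} is therefore unconditional, and the \emph{only} missing input for the full conjecture is the base case you also flag: an effective index bound for $c$-dimensional klt Calabi--Yau pairs with standard coefficients. For $c=2$ your appeal to ``known structure\ldots bounded in terms of the rank alone'' is too vague; the paper cites \cite[Theorem~3]{GLM23} for exactly this effective bound, which is what makes Theorem~\ref{introthm-solv012} unconditional through $c=2$. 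For $c\ge 3$, Theorem~\ref{introthm-3dim} gives virtual abelianity but no uniform index bound, so the conjecture remains open there --- not for lack of control on the fibres.
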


We can prove Conjecture~\ref{conj-solv-bound} in low coregularity due to the following proposition.

\begin{proposition}\label{prop-induction-solvable}
    Assume Conjecture~\ref{conj-solv-bound} holds for dimension at most $d$, then Conjecture~\ref{conj-solv-bound} holds for absolute coregularity at most $d$ and any dimension.
\end{proposition}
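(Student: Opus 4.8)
The plan is to mirror almost verbatim the proof of Theorem~\ref{thm-induction-coreg}, replacing "virtually abelian of rank at most $2c$" throughout by "virtually solvable of length at most $2d-1$ and index at most $i(d,c)$", and carefully tracking how the bounds propagate. So first I would argue by induction on the dimension $d'$ of $X$: assume Conjecture~\ref{conj-solv-bound} holds in dimension $<d'$, and let $(X,D)$ be klt Calabi-Yau type of absolute coregularity $c \le d$ with $\dim X = d'$. As before, pass to $D = D_{\rm st}$ (using $\pi_1^{\rm reg}(X,D)=\pi_1^{\rm reg}(X,D_{\rm st})$ and monotonicity of absolute coregularity). If $(X,D)$ is itself Calabi-Yau, then $\dim X = c \le d$, and the hypothesis (Conjecture~\ref{conj-solv-bound} in dimension $\le d$) applies directly. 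Otherwise, run exactly the construction in the proof of Theorem~\ref{thm-induction-coreg}: choose a generalized Calabi-Yau structure $(X,B,{\bf M})$ of coregularity $c$ with $B \ge D$, take a $\mathbb{Q}$-factorial gdlt modification, apply Lemma~\ref{lem-dlt-group} to pass to $(Y,P_Y,{\bf M})$ with $\pi_1^{\rm reg}(Y,P_Y,{\bf M}) \twoheadrightarrow \pi_1^{\rm reg}(X,D)$, run a $(K_Y+P_Y)$-MMP to a Mori fiber space $f:Z\to W$, handle the case $W = {\rm pt}$ via Theorem~\ref{thm-Bra21-gen-pair} (giving a \emph{finite} group, hence solvable of length $0$ and bounded index), and in the fibered case extract log canonical places with Lemma~\ref{lem-bir-map-group} to get $(Z',R_{Z'},{\bf M})$, apply Lemma~\ref{lem-fibration} to obtain the exact sequence
$$\pi_1^{\rm reg}(F',R_{F'},{\bf S}) \to \pi_1^{\rm reg}(Z',R_{Z'},{\bf M}) \to \pi_1^{\rm reg}(W',R_{W'},{\bf N}) \to 1,$$
where $\pi_1^{\rm reg}(F',R_{F'},{\bf S})$ is finite by Lemma~\ref{lem-Fano-extraction} (via a small modification $F_0 \dashrightarrow F'$) and $(W',R_{W'},{\bf N})$ is gklt of absolute coregularity $\le c$ with $\dim W' < d'$.

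The one genuinely new ingredient is the bookkeeping of indices. The finite groups appearing — namely $\pi_1^{\rm reg}(F',R_{F'})$ and, in the base-a-point case, $\pi_1^{\rm reg}(Z)$ — must be bounded in terms of $d'$ and $c$ only; this should follow from the effective version of boundedness of fundamental groups of klt Fano type pairs (the explicit bound in Braun's theorem / the Jordan-type constants), which I would cite for the bounded-dimension, bounded-coefficient setting that applies here, so there is a constant $N(d,c)$ bounding $|\pi_1^{\rm reg}(F',R_{F'})|$. Then, for $W'$ of dimension $<d'$ the induction gives a solvable subgroup of length $\le 2\dim W' - 1 \le 2(d'-1)-1 = 2d'-3$ and index $\le i(\dim W', c)$. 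Feeding the exact sequence into Lemma~\ref{lem-fin-sol} (with $F = \pi_1^{\rm reg}(F',R_{F'})$ finite) produces for $\pi_1^{\rm reg}(Z',R_{Z'},{\bf M})$ a solvable subgroup of length $\le 2d'-3+1 = 2d'-2 \le 2d'-1$ and index $\le i(\dim W',c)\cdot (N(d,c)-1)!$. Since all further maps $\pi_1^{\rm reg}(Z',R_{Z'},{\bf M}) \twoheadrightarrow \pi_1^{\rm reg}(Z,P_Z,{\bf M}) \twoheadrightarrow \pi_1^{\rm reg}(Y,P_Y,{\bf M}) \twoheadrightarrow \pi_1^{\rm reg}(X,D)$ are surjective, and a surjective image of a virtually solvable group of length $\le \ell$ and index $\le r$ is again virtually solvable of length $\le \ell$ and index $\le r$, the group $\pi_1^{\rm reg}(X,D)$ is virtually solvable of length $\le 2d'-1 \le 2d-1$ and index $\le i(\dim W',c)(N(d,c)-1)!$. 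Setting $i(d',c) := \max$ over the finitely many base dimensions $\dim W' < d'$ of these quantities (together with the bound $N(d,c)$ coming from the $W = {\rm pt}$ case) gives a constant depending only on $d'$ and $c$, completing the induction. Note $d' \le d$ is automatic because $(X,D)$ of absolute coregularity $c \le d$ need not have dimension $\le d$ — wait, that is exactly why the induction on $d'$ is needed and why the statement is "any dimension"; the constant $i(d',c)$ produced must ultimately be expressed so that it only ever invokes $i(\cdot,c)$ for arguments $\le d$, which is guaranteed since whenever $(X,D)$ is Calabi-Yau we land in dimension exactly $c \le d$ and otherwise we strictly decrease dimension in the base.

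The main obstacle I expect is precisely ensuring that every finite group that enters the argument has cardinality bounded purely in terms of $d$ and $c$ (not in terms of the a priori unbounded dimension $d'$ of $X$). The fibers $F'$ are klt Fano type of dimension $\dim F' = d' - \dim W'$, which is \emph{not} a priori bounded; however, they carry a generalized log canonical center of dimension $\le c$ forced by the construction, and this, via Lemma~\ref{lem-dlt-mod-Fano-fibration} and the coregularity bookkeeping already used in the proof of Theorem~\ref{thm-induction-coreg}, constrains the relevant fundamental group to that of a bounded-dimensional Fano type pair (or else the statement must be applied inductively to the fiber as well — in which case one should induct on $d'$ for the whole statement simultaneously, using Lemma~\ref{lem-virt-sol} in place of Lemma~\ref{lem-fin-sol} for the fiber term and Lemma~\ref{lem-fin-sol} only for the truly finite pieces). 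This is the delicate point: one must decide whether the fiber contributes a finite group with $d,c$-bounded order or a virtually solvable group of $d,c$-bounded index and controlled length, and then assemble via Lemma~\ref{lem-virt-sol} or Lemma~\ref{lem-fin-sol} so that the final length stays $\le 2d'-1$ and the index stays a function of $d',c$ only. Once this accounting is set up correctly, the rest is a routine transcription of the proof of Theorem~\ref{thm-induction-coreg}.
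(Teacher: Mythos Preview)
Your overall architecture matches the paper's: induct on the dimension $d'$ of $X$, invoke the construction from the proof of Theorem~\ref{thm-induction-coreg} to obtain the surjection $\pi_1^{\rm reg}(Z',R_{Z'},{\bf M}) \twoheadrightarrow \pi_1^{\rm reg}(X,D)$ and the exact sequence with Fano-type fiber and lower-dimensional Calabi-Yau-type base, then combine. The genuine gap is in how you handle the fiber term. You assert that $|\pi_1^{\rm reg}(F',R_{F'})|$ is bounded by some constant $N(d,c)$, but this is false: already $(\mathbb{P}^1,\tfrac{n-1}{n}p_1+\tfrac{n-1}{n}p_2)$ is a log Fano pair with orbifold fundamental group $\mathbb{Z}/n\mathbb{Z}$, so no bound on the order exists even in fixed dimension and with standard coefficients. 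The ``Jordan-type constants'' you invoke do not bound the order---they bound the \emph{index of an abelian subgroup}. This is precisely \cite[Theorem~3]{BFMS20}, and it is the input the paper uses: since $(F',R_{F'},{\bf S})$ is of Fano type and dimension at most $d'$, its $\pi_1^{\rm reg}$ is virtually abelian of index at most $c(d')$, hence virtually solvable of length~$1$ and index at most $c(d')$. Combining with the inductive bound on the base via Lemma~\ref{lem-virt-sol} (not Lemma~\ref{lem-fin-sol}) yields length at most $1+(2d'-3)+1=2d'-1$ and index at most $i(d'-1,c)\cdot c(d')!$.

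Your worry in the final paragraph about whether constants may depend on $d'$ rather than the hypothesis bound $d$ is misplaced: Conjecture~\ref{conj-solv-bound} explicitly allows $i(d,c)$ to depend on the dimension of $X$, so $d'$-dependence is exactly what is required, and the recursion $i(d',c)\leq i(d'-1,c)\cdot c(d')!$ is perfectly acceptable. The alternative routes you sketch---bounding the absolute coregularity of the fiber, or applying the induction hypothesis to the fiber---are therefore unnecessary (and the first is not obviously available). Once you replace the order bound by the Jordan property and swap Lemma~\ref{lem-fin-sol} for Lemma~\ref{lem-virt-sol}, the argument is complete and coincides with the paper's.
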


\begin{proof}
    We proceed by induction on the dimension. Assume that $(X,D)$ is a projective klt pair of absolute coregularity $c$, with $\dim X=d$, and that Conjecture~\ref{conj-solv-bound} holds for absolute coregularity $c$ and $\dim < d$.

    By the proof of Theorem \ref{thm-induction-coreg}, we have a surjection
    $$\pi_1^{\rm reg}(Z',R_{Z'}, {\bf M})\twoheadrightarrow \pi_1^{\rm reg}(X,D),$$
    and an exact sequence
$$
\pi_1^{\rm reg}(F',R_{F'}, {\bf S}) \rightarrow \pi_1^{\rm reg}(Z',R_{Z'},{\bf M}) \rightarrow \pi_1^{\rm reg}(W',R_{W'}, {\bf N}) \rightarrow 1,
$$
where $(F',R_{F'}, {\bf S})$ is Fano of dimension at most $d$. Hence by \cite[Theorem 3]{BFMS20} there exists a constant $c(d)$ depending only on $d$, such that $\pi_1^{\rm reg}(F',R_{F'}, {\bf S})$ has an abelian subgroup of index $c(d)$.
$(W',R_{W'}, {\bf N})$ is klt of Calabi-Yau type of coregularity at most $c$ and dimension less than $d$, therefore by induction $\pi_1^{\rm reg}(W',R_{W'}, {\bf N})$ has a solvable subgroup of length $2d-3$ and index $i(d-1,c)$.

    Therefore by Lemma~\ref{lem-virt-sol}, the group $ \pi_1^{\rm reg}(Z',R_{Z'},{\bf M})$ is solvable of length at most $2d-3+2$ and index at most $c(d)!i(d-1,c)$.

    As  $ \pi_1^{\rm reg}(Z',R_{Z'},{\bf M})$ surjects onto $ \pi_1^{\rm reg}(X,D)$, the desired bounds hold.
\end{proof}

\begin{proof}[Proof of Theorem~\ref{introthm-solv012}]
    By \cite[Theorem 3]{GLM23} there is an effective bound on the index of an abelian subgroup of $\pi_1^{\rm reg}(X,D)$ of klt Calabi-Yau surfaces. Therefore, the result follows in coregularity $\leq 2$ by Proposition~\ref{prop-induction-solvable}.
\end{proof}

\section{Fundamental Groups of klt Calabi Yau threefolds}

We start with the proof of the existence of `Torus Covers':

\begin{proof}[Proof of Theorem.~\ref{introthm-torus-covers}]
This is verbatim the proof of~\cite[Corollary~3.6]{GKP16b}, where we replace~\cite[Proposition~3.3]{GKP16b} with~\cite[Theorem~4.8]{Amb05} (see also~\cite{JX20}).    
\end{proof}

Now, we first prove finiteness of the fundamental group of the smooth locus in the terminal case.

\begin{theorem} 
\label{thm-terminal}
    Let $X$ be a normal projective threefold with terminal singularities. Let $K_X = 0$. Then either 
    \begin{enumerate}
        \item $X$ is smooth (in which case $\pi_1^{\rm reg}(X)$ is virtually abelian of rank $2\, \tilde{q}(X)$), 
        \item or $X$ is a finite quotient of an Abelian variety (in which case $\pi_1^{\rm reg}(X)$ is virtually abelian of rank six),
        \item or the fundamental group  $\pi_1^{\rm reg}(X)$ is finite.
    \end{enumerate}
    \end{theorem}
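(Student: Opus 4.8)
The plan is to run the minimal model program / structure theory for terminal threefolds with $K_X \sim_{\mathbb{Q}} 0$ and read off the three cases from the Beauville--Bogomolov type decomposition after a quasi-\'etale cover, then translate the information about the cover back to $\pi_1^{\rm reg}(X)$ using the covering theory for the smooth locus. First I would invoke Theorem~\ref{introthm-torus-covers} (torus covers) to reduce to the case $\tilde q(X) = 0$: there is a quasi-\'etale cover $Y = A \times Z \to X$ with $A$ abelian of dimension $\tilde q(X)$ and $\tilde q(Z) = 0$, where $(Z, 0)$ is klt (indeed canonical, since terminality is inherited up to a quasi-\'etale cover) with $K_Z \sim_{\mathbb{Q}} 0$. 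Since $\dim A + \dim Z = 3$, either $\dim A = 3$ and $X$ is a finite quotient of an abelian variety — this is case (2), and $\pi_1^{\rm reg}(X)$ is then virtually $\mathbb{Z}^6$, because $\pi_1^{\rm reg}$ of a quotient of an abelian variety contains $\pi_1(A) = \mathbb{Z}^6$ with finite index (the quasi-\'etale cover restricts to an honest finite \'etale cover over the smooth loci up to finite index, by purity) — or $\dim Z \geq 1$ and we continue.

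Next I would analyze $Z$ with $\tilde q(Z) = 0$, $\dim Z \in \{1,2,3\}$. If $\dim Z = 1$ then $Z$ is an elliptic curve, contradicting $\tilde q(Z) = 0$. If $\dim Z = 2$, then $Z$ is a (canonical, hence) K3 or Enriques surface or a quotient thereof with $\tilde q = 0$; these are simply connected resp. have finite $\pi_1$, and $A$ is an elliptic curve, so $Y = A \times Z$ has $\pi_1^{\rm reg}(Y)$ virtually $\mathbb{Z}^2 = \mathbb{Z}^{2\tilde q(X)}$. If $\dim Z = 3$, then $A$ is a point, $Y = Z$, and we are reduced to a canonical threefold with $\tilde q = 0$ and $K \sim_{\mathbb{Q}} 0$; here I would cite the structure results on such threefolds — the relevant input being that a terminal (or canonical) Calabi--Yau threefold with $\tilde q = 0$ has finite (in fact, conjecturally trivial) algebraic fundamental group, and more precisely that $\pi_1^{\rm reg}$ is finite, which is exactly the content of~\cite[Corollary~8.25]{GKP16b} combined with the fact that in the smooth case a simply connected Calabi--Yau threefold has $\pi_1^{\rm reg} = \pi_1 = 1$, and in the strictly terminal singular case one passes to a terminalization / crepant resolution and controls the difference by the finitely many singular points. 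Splitting off case (1): if $X$ is already smooth, then $\pi_1^{\rm reg}(X) = \pi_1(X)$ and the classical Beauville--Bogomolov decomposition gives that it is virtually $\mathbb{Z}^{2\tilde q(X)}$ directly.

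Assembling: the quasi-\'etale cover $Y \to X$ induces, over the smooth loci and after removing the (codimension $\geq 2$) branch and singular loci, a finite index inclusion $\pi_1^{\rm reg}(Y) \hookrightarrow \pi_1^{\rm reg}(X)$ up to finite kernel/cokernel coming from the local fundamental groups at the (terminal, hence $\pi_1^{\rm loc}$ finite) singular points; combined with Lemma~\ref{lem-virtual-groups} this shows $\pi_1^{\rm reg}(X)$ is virtually abelian of the asserted rank in cases (1) and (2), and finite in case (3). The main obstacle I anticipate is the last case ($\dim Z = 3$, $\tilde q = 0$): passing from finiteness of the algebraic fundamental group or of $\pi_1$ of a resolution to finiteness of $\pi_1^{\rm reg}(X)$ itself requires care, since $X_{\rm reg}$ sees the terminal singular points, and one must check that the local fundamental groups at terminal threefold singularities (which are finite by~\cite{Bra21}, or classically cyclic for cDV points) do not enlarge $\pi_1^{\rm reg}$ to an infinite group — this is where I would lean on the surjection $\pi_1^{\rm reg}(\widetilde{X}) \twoheadrightarrow \pi_1^{\rm reg}(X)$ from a suitable crepant birational model together with the finiteness of $\pi_1^{\rm loc}$, in the spirit of Lemmas~\ref{lem-dlt-group} and~\ref{lem-MMP-group}.
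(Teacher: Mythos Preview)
Your reduction to the case $\tilde q(X)=0$ via the torus cover is essentially the paper's first step, and your treatment of cases (1) and (2) is fine. The genuine gap is in case (3), the heart of the theorem: a singular terminal threefold with $K_X=0$ and $\tilde q(X)=0$. Here you invoke~\cite[Corollary~8.25]{GKP16b}, but that result (and its relatives) only controls $\pi_1(X)$, not $\pi_1^{\rm reg}(X)$; the paper explicitly points out in the introduction that no statement for $\pi_1^{\rm reg}$ was previously known in dimension $\geq 3$. Your proposed patch, a surjection $\pi_1^{\rm reg}(\widetilde{X}) \twoheadrightarrow \pi_1^{\rm reg}(X)$ from a crepant model, has the arrow backwards: for a resolution $f\colon \widetilde X\to X$ one gets $\pi_1^{\rm reg}(X)\twoheadrightarrow \pi_1(\widetilde X)$, and the kernel is normally generated by local loops at the singular points. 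Knowing these loops have finite order (finiteness of $\pi_1^{\rm loc}$) does \emph{not} imply the kernel is finite, so finiteness of $\pi_1(\widetilde X)$ does not propagate to $\pi_1^{\rm reg}(X)$.

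The paper closes this gap by a completely different, analytic argument. One packages the local data into an orbifold structure $\mathcal Y=(Y,\Delta)$ on a resolution $Y$ with $\pi_1^{\rm orb}(\mathcal Y)=\pi_1^{\rm reg}(X)$, and then computes the orbifold holomorphic Euler characteristic $\chi(\mathcal Y,\mathcal O_{\mathcal Y})$ via the formula of~\cite{TX17} and Reid's basket invariants: since $K_X=0$ the Chern-class term vanishes, and each genuinely singular point contributes a strictly positive summand. Hence $\chi_{(2)}(\tilde{\mathcal Y},\mathcal O_{\tilde{\mathcal Y}})=\chi(\mathcal Y,\mathcal O_{\mathcal Y})>0$, and one concludes finiteness of $\pi_1^{\rm orb}(\mathcal Y)$ by Campana's $L^2$-index argument together with $\kappa^+(\mathcal Y)=0$ as in~\cite{GKP16b,Cam21}. (When the singularity contributions all vanish one checks $\pi_1^{\rm reg}(X)=\pi_1(X)$ directly, and then~\cite{Kol95} suffices.) This numerical positivity is the missing idea in your outline.
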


    \begin{proof}
        If $X$ is smooth, the statement is classical. So we assume $X$ is not smooth. Then $\tilde{q}(X)=0$, otherwise there would exist a finite quasi-\'etale cover $X' \to X$, again with terminal singularities, which splits as $X'=A \times Z$, with $A$ an Abelian variety and $Z$ having terminal singularities again. If $\tilde{q}(X)={\rm dim}(A) = 3$, we are in Case (2).
        
        So assume  $\tilde{q}(X)={\rm dim}(A) \neq 3$, but also ${\rm dim}(Z) \neq 1,2$, since terminal curve and surface singularities are smooth.

        Now, we consider a resolution of singularities $f : Y \to X$ and due to the finiteness of local fundamental groups, we know that there is an orbifold structure $\mathcal{Y}=(Y,\Delta=\sum (1-1/m_i) E_i)$ with $\Delta$ supported on the exceptional divisor of $f$, such that $\pi_1^{\rm orb}(Y,\Delta)=\pi_1^{\rm reg}(X)$, compare~\cite[Proof of Theorem~7]{Bra21}.
        By $\tilde{\mathcal{Y}} \to \mathcal{Y}$, we denote the associated orbifold universal cover, and we know that for the orbifold $L^2$-index and the orbifold Euler characteristic, we have the usual equality
        $$
        \chi_{(2)}(\tilde{\mathcal{Y}}, \mathcal{O}_{\tilde{\mathcal{Y}}})=\chi (\mathcal{Y},\mathcal{O}_{\mathcal{Y}}).
        $$
        The orbifold Euler characteristic of such a threefold has been computed in~\cite[Theorem~4.1]{TX17} in terms of the terminal isolated singularities of $X$ and Reid's invariants of these singularities, see~\cite{Rei87}. The formula looks as follows:
        $$
        \chi (\mathcal{Y},\mathcal{O}_{\mathcal{Y}})= \frac{-c_2(X) \cdot K_X}{24} + \sum_i \frac{1}{24 |I_i|} \sum_{j_i} \left( r_{ij_i}- \frac{1}{r_ij_i}\right) \geq \frac{-c_2(X) \cdot K_X}{24},
        $$
        where the index $i$ runs over all singularities $x_i$ of $X$, $I_i$ is their respective \emph{inertia group}, $j_i$ runs over all (cyclic quotient) singularities in the \emph{basket} of $x_i$, basically coming from a deformation of $x_i$, and $r_{ij}$ is the \emph{index of the singularity in the basket}.
        While $-c_2(X) \cdot K_X$ was positive in~\cite{TX17}, here clearly $-c_2(X) \cdot K_X = 0$, since $K_X$ is trivial. Now we have two cases: 
        \ \\
        \noindent
        \textbf{Case 1: No contributions from the singularities to $\chi (\mathcal{Y},\mathcal{O}_{\mathcal{Y}})$.}
        \ \\
        In this case, looking at how these contributions arise in~\cite{Rei87} or~\cite[Section~4]{TX17}, we see that
        $
        \pi_1^{\rm reg}(X)=\pi_1(X),
        $
        and for the latter, we know finiteness by~\cite[(4.17.3)]{Kol95}. So we only have to consider:
         \ \\
        \noindent
        \textbf{Case 2: There is a nontrivial contribution from some singularity to $\chi (\mathcal{Y},\mathcal{O}_{\mathcal{Y}})$.}
        \ \\
        Then $\chi (\mathcal{Y},\mathcal{O}_{\mathcal{Y}}) >0$. So $\chi_{(2)}(\tilde{\mathcal{Y}}, \mathcal{O}_{\tilde{\mathcal{Y}}}) >0$, and then we can finish as in~\cite[Proof of (4.1)]{Cam95}, since 
        $$
        \kappa^+(\mathcal{Y})={\rm \max}\{\kappa(\det(\mathcal{F}));~\mathcal{F} \mathrm{~is~a~coherent~subsheaf~of~}\Omega_{\mathcal{Y}}^p \mathrm{~for~some~}p\}=0
        $$ 
        by the same argumentation as in~\cite[Proof~of~Prop.~8.23]{GKP16b} or~\cite[Proof~of~Thm.~3.6]{Cam21}, which is valid since locally on the smooth locus, all objects defined for orbifolds agree with the usual ones.  
    \end{proof}

    Now, we can proceed to the general case:

    \begin{proof}[Proof of Theorem.~\ref{introthm-3dim}]
    Let $(X,D)$ be a threefold pair as in the theorem. By Theorem~\ref{introthm-torus-covers}, we can assume that $\tilde{q}(X)=0$.
        \ \\
        \noindent
        \textbf{Case 1: $D=0$ is trivial.}    
        \ \\
    Taking an index one cover, we can assume $K_X=0$ and that $X$ has canonical singularities. 
    Now we can take a terminalization as in~\cite[Lemma~4.9]{TX17}, the respective Euler characteristics and $L^2$-indices agree, and we can finish by Theorem~\ref{thm-terminal}.
     \ \\
        \noindent
        \textbf{Case 2: $D$ is nontrivial.}    
        \ \\
        Taking if necessary a small $\mathbb{Q}$-factorialization and by purity of the branch locus, we can assume that $X$ is $\mathbb{Q}$-factorial. Now we run a $K_X$-MMP which ends on a Mori fiber space $f: Z \to W$. By Lemma~\ref{lem-MMP-group}, we have a surjection 
        $$
        \pi_1^{\rm reg}(Z,D_Z)\twoheadrightarrow \pi_1^{\rm reg}(X,D),
        $$ 
        where $\varphi:X \dashrightarrow Z$ induces $\varphi_*(D)=D_Z$.

        If  $W$ is a point, then in case one of the coefficients of $D_Z$ is \textit{not} standard, then the standard approximation is a log Fano pair, and we are done by~\cite[Thm.~2]{Bra21}; in case all coefficients are standard, the orbifold index-one cover with respect to $K_Z+D_Z$ is a $K$-trivial canonical threefold, i.e. we can apply Case 1.  

         If on the other hand $\mathrm{dim}\,  W >0$, then we can apply Lemma~\ref{lem-fibration} to obtain the exact sequence

         $$
         \pi_1^{\rm reg}(F,D_F)\rightarrow \pi_1^{\rm reg}(Z,D_Z)\rightarrow \pi_1^{\rm reg}(W,D_W)\rightarrow 1,
         $$
         
         where the pair $(W,D_W)$ is the one obtained from the canonical bundle formula \cite[Theorem 4.1]{Amb05}. We have that $(F,D_F)$ is a log Fano pair, and $(W,D_W)$ is $klt$ Calabi-Yau of dimension at most $2$, thus a pair of absolute coregularity at most $2$. Hence, by Theorem~\ref{introthm-coreg12} with $c\leq 2$, $\pi_1^{\rm reg}(W,D_W)$ is virtually abelian of rank at most $4$. By Lemma~\ref{lem-virtual-groups}, the fundamental group $\pi_1^{\rm reg}(Z,D_Z)$ is virtually abelian of rank at most $4$, and so is $\pi_1^{\rm reg}(X,D)$.
        
    \end{proof}

\bibliographystyle{habbvr}
\bibliography{bib}

\begin{thebibliography}{10}
\expandafter\ifx\csname url\endcsname\relax
  \def\url#1{\texttt{#1}}\fi
\expandafter\ifx\csname doi\endcsname\relax
  \def\doi#1{\burlalt{doi:#1}{http://dx.doi.org/#1}}\fi
\expandafter\ifx\csname urlprefix\endcsname\relax\def\urlprefix{URL }\fi
\expandafter\ifx\csname href\endcsname\relax
  \def\href#1#2{#2}\fi
\expandafter\ifx\csname burlalt\endcsname\relax
  \def\burlalt#1#2{\href{#2}{#1}}\fi

\bibitem{Amb05}
F.~Ambro.
\newblock The moduli {$b$}-divisor of an lc-trivial fibration.
\newblock {\em Compos. Math.}, 141(2):385--403, 2005.
\newblock \doi{10.1112/S0010437X04001071}.

\bibitem{BCHM10}
C.~Birkar, P.~Cascini, C.~D. Hacon, and J.~McKernan.
\newblock Existence of minimal models for varieties of log general type.
\newblock {\em J. Amer. Math. Soc.}, 23(2):405--468, 2010.
\newblock \doi{10.1090/S0894-0347-09-00649-3}.

\bibitem{Bra21}
L.~Braun.
\newblock The local fundamental group of a {Kawamata} log terminal singularity
  is finite.
\newblock {\em Invent. Math.}, 2021.
\newblock \doi{10.1007/s00222-021-01062-0}.

\bibitem{BFMS20}
L.~Braun, S.~Filipazzi, J.~Moraga, and R.~Svaldi.
\newblock The {J}ordan property for local fundamental groups.
\newblock {\em Geom. Topol.}, 26(1):283--319, 2022.
\newblock \doi{10.2140/gt.2022.26.283}.

\bibitem{Cam95}
F.~Campana.
\newblock Fundamental group and positivity of cotangent bundles of compact
  {K}\"{a}hler manifolds.
\newblock {\em J. Algebraic Geom.}, 4(3):487--502, 1995.

\bibitem{Cam21}
F.~Campana.
\newblock The {B}ogomolov-{B}eauville-{Y}au decomposition for {KLT} projective
  varieties with trivial first {C}hern class---without tears.
\newblock {\em Bull. Soc. Math. France}, 149(1):1--13, 2021.
\newblock \doi{10.24033/bsmf.2823}.

\bibitem{CC14}
F.~Campana and B.~Claudon.
\newblock Abelianity conjecture for special compact {K}\"{a}hler 3-folds.
\newblock {\em Proc. Edinb. Math. Soc. (2)}, 57(1):55--78, 2014.
\newblock \doi{10.1017/S0013091513000849}.

\bibitem{Corti}
A.~Corti, editor.
\newblock {\em Flips for 3-folds and 4-folds}, volume~35 of {\em Oxford Lecture
  Series in Mathematics and its Applications}.
\newblock Oxford University Press, Oxford, 2007.
\newblock \doi{10.1093/acprof:oso/9780198570615.001.0001}.

\bibitem{FM23}
F.~Figueroa and J.~Moraga.
\newblock Fundamental groups of low-dimensional lc singularities, 2023,
  \burlalt{arXiv:2302.11790}{http://arxiv.org/abs/arXiv:2302.11790}.

\bibitem{FMP22}
F.~Figueroa, J.~Moraga, and J.~Peng.
\newblock Log canonical thresholds and coregularity, 2022,
  \burlalt{arXiv:2204.05408}{http://arxiv.org/abs/arXiv:2204.05408}.

\bibitem{Fil20}
S.~Filipazzi.
\newblock On a generalized canonical bundle formula and generalized adjunction.
\newblock {\em Ann. Sc. Norm. Super. Pisa Cl. Sci. (5)}, 21:1187--1221, 2020.

\bibitem{FS23}
S.~Filipazzi and R.~Svaldi.
\newblock On the connectedness principle and dual complexes for generalized
  pairs.
\newblock {\em Forum Math. Sigma}, 11:Paper No. e33, 39, 2023.
\newblock \doi{10.1017/fms.2023.25}.

\bibitem{GLM23}
C.~Gachet, Z.~Liu, and J.~Moraga.
\newblock Fundamental groups of log calabi-yau surfaces, 2023,
  \burlalt{2312.03981}{http://arxiv.org/abs/2312.03981}.

\bibitem{GGK19}
D.~Greb, H.~Guenancia, and S.~Kebekus.
\newblock Klt varieties with trivial canonical class: holonomy, differential
  forms, and fundamental groups.
\newblock {\em Geom. Topol.}, 23(4):2051--2124, 2019.
\newblock \doi{10.2140/gt.2019.23.2051}.

\bibitem{GKP16b}
D.~Greb, S.~Kebekus, and T.~Peternell.
\newblock Singular spaces with trivial canonical class.
\newblock In {\em Minimal models and extremal rays ({K}yoto, 2011)}, volume~70
  of {\em Adv. Stud. Pure Math.}, pages 67--113. Math. Soc. Japan, [Tokyo],
  2016.
\newblock \doi{10.2969/aspm/07010067}.

\bibitem{HL21}
J.~Han and W.~Liu.
\newblock On a generalized canonical bundle formula for generically finite
  morphisms.
\newblock {\em Ann. Inst. Fourier (Grenoble)}, 71(5):2047--2077, 2021.
\newblock \doi{10.5802/aif.3437}.

\bibitem{K13}
J.~Koll\'{a}r.
\newblock {\em Singularities of the minimal model program}, volume 200 of {\em
  Cambridge Tracts in Mathematics}.
\newblock Cambridge University Press, Cambridge, 2013.
\newblock \doi{10.1017/CBO9781139547895}.
\newblock With a collaboration of S\'{a}ndor Kov\'{a}cs.

\bibitem{Kol95}
J.~Kollár.
\newblock {\em Shafarevich Maps and Automorphic Forms}.
\newblock Princeton University Press, 1995.
\newblock \urlprefix\url{http://www.jstor.org/stable/j.ctt7ztqb4}.

\bibitem{Laz04a}
R.~Lazarsfeld.
\newblock {\em Positivity in algebraic geometry. {I}}, volume~48 of {\em
  Ergebnisse der Mathematik und ihrer Grenzgebiete. 3. Folge. A Series of
  Modern Surveys in Mathematics [Results in Mathematics and Related Areas. 3rd
  Series. A Series of Modern Surveys in Mathematics]}.
\newblock Springer-Verlag, Berlin, 2004.
\newblock \doi{10.1007/978-3-642-18808-4}.
\newblock Classical setting: line bundles and linear series.

\bibitem{Mor21a}
J.~Moraga.
\newblock On a toroidalization for klt singularities, 2021,
  \burlalt{arXiv:2106.15019}{http://arxiv.org/abs/arXiv:2106.15019}.

\bibitem{Mor22}
J.~Moraga.
\newblock Coregularity of {F}ano varieties.
\newblock {\em Geometriae Dedicata}, 218(2), Feb. 2024.
\newblock \doi{10.1007/s10711-023-00882-z}.

\bibitem{MS21}
J.~Moraga and R.~Svaldi.
\newblock A geometric characterization of toric singularities, 2021,
  \burlalt{arXiv:2108.01717}{http://arxiv.org/abs/arXiv:2108.01717}.

\bibitem{Nor83}
M.~V. Nori.
\newblock Zariski's conjecture and related problems.
\newblock {\em Ann. Sci. \'{E}cole Norm. Sup. (4)}, 16(2):305--344, 1983.
\newblock
  \urlprefix\url{http://www.numdam.org/item?id=ASENS_1983_4_16_2_305_0}.

\bibitem{Rei87}
M.~Reid.
\newblock Young person's guide to canonical singularities.
\newblock In {\em Algebraic geometry, {B}owdoin, 1985 ({B}runswick, {M}aine,
  1985)}, volume~46 of {\em Proc. Sympos. Pure Math.}, pages 345--414. Amer.
  Math. Soc., Providence, RI, 1987.

\bibitem{Sho00}
V.~V. Shokurov.
\newblock Complements on surfaces.
\newblock In {\em Complements on surfaces}, volume 102, pages 3876--3932. J.
  Math. Sci. (New York), 2000.
\newblock \doi{10.1007/BF02984106}.
\newblock Algebraic geometry, 10.

\bibitem{TX17}
Z.~Tian and C.~Xu.
\newblock Finiteness of fundamental groups.
\newblock {\em Compos. Math.}, 153(2):257--273, 2017.
\newblock \doi{10.1112/S0010437X16007867}.

\bibitem{JX20}
J.~Xu.
\newblock Homogeneous fibrations on log {C}alabi-{Y}au varieties.
\newblock {\em Manuscripta Math.}, 162(3-4):389--401, 2020.
\newblock \doi{10.1007/s00229-019-01137-6}.

\end{thebibliography}

\end{document}